\documentclass[a4paper,leqno,12pt]{amsart}

\usepackage{ulem}
\usepackage{enumerate}
\usepackage{amssymb,amsthm,amsmath}
\usepackage{mathrsfs}
\usepackage[colorlinks,breaklinks]{hyperref}
\usepackage[margin=1.4in]{geometry}
\usepackage{braket}
\usepackage{comment}
\usepackage{tikz}
\usepackage{pgfplots}
\usepackage{graphicx}
\usepackage{comment}
\usepackage{caption}
\pgfplotsset{compat=1.11}
\usepgfplotslibrary{fillbetween}
\usetikzlibrary{intersections}

\usetikzlibrary{positioning,arrows}
\usepackage{capt-of}
\usepackage{caption}
\usepackage{float}
\usetikzlibrary{shapes}
\usetikzlibrary{plotmarks}
\tikzset{
  state/.style={circle,draw,minimum size=6ex},
  arrow/.style={-latex, shorten >=1ex, shorten <=1ex}}

\theoremstyle{plain}
\newtheorem{prop}{Proposition}[section]
\newtheorem{lemma}[prop]{Lemma}
\newtheorem{theorem}[prop]{Theorem}

\theoremstyle{definition}

\theoremstyle{remark}
\newtheorem{remark}[prop]{Remark}

\setlength{\parindent}{20pt}
\newcommand{\Mod}[1]{\ (\mathrm{mod}\ #1)}
\newcommand{\C}{\mathbb{C}}
\newcommand{\R}{\mathbb{R}}

\newcommand{\ZZ}{\mathbb{Z}}
\newcommand{\CC}{\mathbb{C}}

\newcommand{\sceq}{\mathrel{\mathop:}=}

\newcommand{\pmat}[4]{\begin{pmatrix} #1&#2\\#3&#4\end{pmatrix}}

\DeclareMathOperator{\tr}{tr}

\DeclareMathOperator{\dist}{dist}
\DeclareMathOperator{\dvol}{dvol}
\DeclareMathOperator{\vol}{vol}

\newcommand*{\defeq}{\stackrel{\text{def}}{=}}
\newcommand*{\mseq}{\stackrel{\text{m}}{=}}

\allowdisplaybreaks

\makeatletter
\@namedef{subjclassname@2020}{%
  \textup{2020} Mathematics Subject Classification}
\makeatother

\usetikzlibrary{patterns}

\begin{document}

\title[]{Explicit spectral gap for Hecke congruence covers of arithmetic Schottky surfaces}

\author[L.\@ Soares]{Louis Soares}
\email{louis.soares@gmx.ch}

\subjclass[2020]{58J50 (Primary) 11M36, 11F06 (Secondary)}
\keywords{eigenvalues, hyperbolic surfaces, congruence covers, spectral gap, Laplace--Beltrami operator}
\begin{abstract}
Let $\Gamma$ be a Schottky subgroup of $\mathrm{SL}_2(\mathbb{Z})$ and let $X=\Gamma\backslash \mathbb{H}^2$ be the associated hyperbolic surface. Conditional on the generalized Riemann hypothesis for quadratic $L$-functions, we establish a uniform and explicit spectral gap for the Laplacian on the Hecke congruence covers $ X_0(p) = \Gamma_0(p)\backslash \mathbb{H}^2$ of $X$ for ``almost'' all primes $p$, provided the limit set of $\Gamma$ is thick enough.
\end{abstract}
\maketitle

\section{Introduction}

\subsection{Spectral gaps for congruence covers and main result} For all $q\in \mathbb{N}$ we denote by $X(q)$ the principal congruence cover of level $q$ of the modular surface $X = \mathrm{SL}_2(\mathbb{Z})\backslash\mathbb{H}^2$ and we let
$$
\lambda_0(q)=0< \lambda_1(q) \leqslant  \lambda_2(q) \leqslant \dots
$$ 
be the eigenvalues of the Laplace--Beltrami operator on $X(q)$. In \cite{Selberg_3/16}, Selberg famously proved that for all $q$ we have $\lambda_1(q)\geqslant \frac{3}{16}$, and he conjectured that for all $q$ we should have $\lambda_1(q)\geqslant \frac{1}{4}$. This remains one of the fundamental open problems of automorphic forms, although notable progress has been made in \cite{Iwaniec1990,LRS,Iwaniec96,KimShahidi,HKSar}, see also the expository articles of Sarnak \cite{Sar95,Sar05}.

In this paper, we consider congruence covers of quotients $X=\Gamma\backslash \mathbb{H}^2$, where $\Gamma$ is an \textit{infinite-index} subgroup of $ \mathrm{SL}_2(\ZZ)$. Such groups, also called ``thin'' groups, do not come under the purview of the aforementioned papers. When $\Gamma$ is a thin group, the Hausdorff dimension $\delta$ of its limit set is smaller than $1$ and $X=\Gamma\backslash \mathbb{H}^2$ is an \textit{infinite-area} hyperbolic surface. Moreover, the $L^2$-spectrum of the Laplace--Beltrami operator $\Delta_X$ on $X$ is rather sparse, see §\ref{sec:specth} for more details. If $\delta > \frac{1}{2}$, there are only finitely many eigenvalues that all lie within the interval $\left[\delta(1-\delta),\frac{1}{4} \right]$, and the smallest eigenvalue is equal to $\lambda_0=\delta(1-\delta)$. If $\delta\leqslant \frac{1}{2}$ there are no eigenvalues at all. Borthwick's book \cite{Borthwick_book} is a good reference for the spectral theory of infinite-area hyperbolic surfaces. 

We focus on the case $\delta > \frac{1}{2}$ and we define the multiset
$$
\Omega(X) \defeq \left\{ s\in \left(\frac{1}{2},\delta\right] : \text{$\lambda = s(1-s)$ is an $L^2$-eigenvalue of $\Delta_X$} \right\},
$$
where each $s$ is repeated according to the multiplicity of $\lambda = s(1-s)$ as an eigenvalue of $\Delta_X$. When $\Gamma$ is a subgroup of $\mathrm{SL}_2(\mathbb{Z})$ and $q\in \mathbb{N}$, we define the (principal) congruence subgroup of $\Gamma$ of level $q$ as usual by
\begin{equation}\label{defiCongrunceGroup}
\Gamma(q) \defeq \left\{ \text{$\gamma\in \Gamma$ : $\gamma\equiv I$ mod $q$} \right\},
\end{equation}
and we write $X(q) = \Gamma(q)\backslash\mathbb{H}^2$ for the associated covering. 

Building on earlier work of Sarnak--Xue \cite{SarnakXue} for cocompact arithmetic\footnote{We refer to \cite{SarnakXue} for the precise definition of ``arithmetic'' in this context} subgroups, Gamburd \cite{Gamburd1} proved the first analogue of Selberg's $\frac{3}{16}$-theorem in the infinite-area setting:

\begin{theorem}[Gamburd \cite{Gamburd1}]\label{thm:gamburd} For every finitely generated subgroup $\Gamma\subset \mathrm{SL}_2(\mathbb{Z})$ with $\delta > \frac{5}{6}$ and for every large enough prime $p$ we have
\begin{equation}\label{eq:gamburd}
\Omega(X(p)) \cap \left( \frac{5}{6}, \delta \right] \mseq \Omega(X) \cap \left( \frac{5}{6}, \delta \right],
\end{equation}
where for any two multisets $A$ and $B$ we write $A \mseq B$ if and only if the multiplicities of all elements are the same on both sides.
\end{theorem}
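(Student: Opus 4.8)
The plan is to run a multiplicity argument of Sarnak--Xue type: one plays a representation-theoretic lower bound for the dimension of a ``new'' eigenspace on $X(p)$ against an analytic upper bound coming from the pretrace formula, and the hypothesis $\delta>\tfrac56$ is exactly what makes the two incompatible. First I would fix the covering structure. Since $\delta>\tfrac12$ the group $\Gamma$ is non-elementary, hence Zariski dense in $\mathrm{SL}_2$, so by strong approximation the reduction $\Gamma\to\mathrm{SL}_2(\mathbb F_p)$ is surjective for all but finitely many $p$; thus for $p$ large $\Gamma(p)\trianglelefteq\Gamma$ with quotient $G_p\cong\mathrm{SL}_2(\mathbb F_p)$ (or $\mathrm{PSL}_2(\mathbb F_p)$), and $X(p)\to X$ is Galois with deck group $G_p$. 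The limit set — hence $\delta$ and the interval of admissible parameters — is unchanged under this finite cover, and one inclusion in \eqref{eq:gamburd} is free: any $L^2$-eigenfunction on $X$ pulls back to one on $X(p)$ with the same eigenvalue, so every $s\in\Omega(X)$ occurs in $\Omega(X(p))$ with at least the same multiplicity. Decomposing $L^2(X(p))$ into $G_p$-isotypic pieces (the trivial one being the pullback of $L^2(X)$), the $\lambda$-eigenspace on $X(p)$ is a unitary $G_p$-module for $\lambda=s_0(1-s_0)$, and the task is to show that its ``new part'' $V_\lambda^{\mathrm{new}}$ — the sum of the non-trivial isotypic pieces — vanishes once $s_0>\tfrac56$ and $p$ is large.

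The lower bound is pure representation theory: if $V_\lambda^{\mathrm{new}}\neq0$ it contains a non-trivial irreducible $G_p$-module, and the smallest non-trivial irreducible of $\mathrm{SL}_2(\mathbb F_p)$ (resp.\ $\mathrm{PSL}_2(\mathbb F_p)$) has dimension at least $\tfrac{p-1}{2}$; hence $\dim V_\lambda^{\mathrm{new}}\geqslant\tfrac{p-1}{2}$.

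For the upper bound I would apply the pretrace formula on $X(p)$ with a non-negative point-pair invariant $k=k_R$ supported in $[0,R]$ whose spherical transform $h$ is non-negative on the whole $L^2$-spectrum of $\mathbb H^2$ and satisfies $h(s_0)\asymp e^{s_0R}$ (for instance a hyperbolic convolution square of the indicator of a ball of radius $R/2$; then also $k(r)\asymp e^{(R-r)/2}$ on $[0,R]$). Writing $\Psi=\sum_i|\phi_i|^2$ for an orthonormal basis $\{\phi_i\}$ of $V_\lambda^{\mathrm{new}}$, positivity yields $h(s_0)\Psi(z)\leqslant\sum_{\gamma\in\Gamma(p)}k(d(z,\gamma z))$ for every $z$. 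Two inputs turn this into a bound on $m=\dim V_\lambda^{\mathrm{new}}$. First, $X(p)$ is geometrically finite with convex core $\mathcal C(X(p))$ of finite area $\asymp[\Gamma:\Gamma(p)]\asymp p^3$, on which the discrete $L^2$-eigenfunctions are concentrated; since $\int_{X(p)}\Psi=m$, integrating the pretrace inequality over $\mathcal C(X(p))$ gives $e^{s_0R}m\ll\int_{\mathcal C(X(p))}\sum_{\gamma\in\Gamma(p)}k(d(z,\gamma z))\,d\mu(z)$. Second, $\Gamma(p)\subseteq\Gamma^{\mathrm{SL}_2(\mathbb Z)}(p)$, the full level-$p$ principal congruence subgroup of $\mathrm{SL}_2(\mathbb Z)$, of index $\asymp p^3$; so the classical Sarnak--Xue lattice-point count $\#\{\gamma\in\Gamma^{\mathrm{SL}_2(\mathbb Z)}(p):d(z,\gamma z)\leqslant r\}\ll_\varepsilon e^{\varepsilon r}(e^{r}/p^3+e^{r/2})$ (uniformly for $z$ in the core, after handling the $\mathrm{SL}_2(\mathbb Z)$-cusp regions) together with a summation by parts against $k$ bounds the right-hand side by $\ll_\varepsilon p^\varepsilon(e^{R}+p^3e^{R/2})$. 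Comparing, $m\ll_\varepsilon p^\varepsilon(e^{(1-s_0)R}+p^3e^{(1/2-s_0)R})$; optimising in $R$ — the two terms balance near $R\asymp6\log p$ — gives $m\ll_\varepsilon p^{\,6(1-s_0)+\varepsilon}$.

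Finally, $6(1-s_0)<1$ precisely when $s_0>\tfrac56$, so for any parameter $s_0\in\left(\tfrac56,\delta\right]$ the bounds $\dim V_\lambda^{\mathrm{new}}\geqslant\tfrac{p-1}{2}$ and $\dim V_\lambda^{\mathrm{new}}\ll_\varepsilon p^{6(1-s_0)+\varepsilon}$ contradict each other for $p$ large; hence $V_\lambda^{\mathrm{new}}=0$ for all such $\lambda$ and all large $p$, which is \eqref{eq:gamburd}. I expect the main obstacle to be the counting step: making the lattice-point bound genuinely uniform in $p$ is delicate because the basepoint must lie in the convex core of $X(p)$, whose diameter grows like $\log p$, so one really has to integrate the pretrace inequality over the core and estimate the resulting diagonal sum $\int_{\mathcal C(X(p))}\sum_{\gamma\in\Gamma(p)}k(d(z,\gamma z))$ while keeping the $p$-dependence under control, including near the cusps of $\mathrm{SL}_2(\mathbb Z)$. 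It is precisely the crude step $\Gamma(p)\subseteq\Gamma^{\mathrm{SL}_2(\mathbb Z)}(p)$ — which discards the thinness of $\Gamma$ and counts with exponent $1$ instead of $\delta$ — that pins the threshold at $\tfrac56$ (the same ``$6(1-s)<1$'' as in Sarnak--Xue); replacing it by an equidistribution count for the $\Gamma$-orbit modulo $p$ would improve the exponent but is not needed for this statement.
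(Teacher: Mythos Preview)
The paper does not supply its own proof of this statement: Theorem~\ref{thm:gamburd} is quoted from Gamburd's paper \cite{Gamburd1} as background, and the present paper proceeds by entirely different (transfer-operator) methods to prove its own main result. So there is no ``paper's proof'' to compare against.

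That said, your proposal is a faithful sketch of Gamburd's original argument, and the arithmetic is correct: the Frobenius lower bound $\dim V_\lambda^{\mathrm{new}}\geqslant\frac{p-1}{2}$, the Sarnak--Xue lattice-point count in $\Gamma^{\mathrm{SL}_2(\mathbb Z)}(p)$, the integration over the convex core picking up the volume factor $\asymp p^3$, and the balance at $R\asymp 6\log p$ giving $m\ll_\varepsilon p^{6(1-s_0)+\varepsilon}$ all check out and produce the threshold $s_0>\tfrac56$. You have also correctly identified the genuine technical obstacle, namely the uniformity in $p$ of the counting/integration step in the infinite-area setting: in Gamburd's paper this is handled via a careful analysis of where the $L^2$-eigenfunctions live (they decay exponentially off the convex core, which substitutes for compact support) together with the collar lemma to control the near-cusp contributions inherited from $\mathrm{SL}_2(\mathbb Z)$. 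Your phrase ``discrete $L^2$-eigenfunctions are concentrated on the convex core'' is the right heuristic but hides real work; filling it in is exactly the content of Gamburd's Sections on the infinite-area pretrace estimate. With that caveat, the strategy is sound.
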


Theorem \ref{thm:gamburd} implies that the second  eigenvalue of the Laplace--Beltrami operator on $X(p)$, if existent, satisfies 
$$
\lambda_1(p)\geqslant \min\left\{ \frac{5}{36},\lambda_1(1) \right\}.
$$
Selberg's $\frac{3}{16}$-theorem and Gamburd's $\frac{5}{36}$-theorem have been extended as follows:

\begin{theorem}[Bourgain--Gamburd--Sarnak \cite{BGS}]\label{thm:bgs} For every finitely generated subgroup $\Gamma\subset \mathrm{SL}_2(\mathbb{Z})$ with $\delta > \frac{1}{2}$ there exists $c = c(\Gamma)>0$ such that for all square-free $q\in \mathbb{N}$ we have 
\begin{equation}\label{eq:gamburd}
\Omega(X(q)) \cap \left[ \delta - c, \delta \right] \mseq \Omega(X) \cap \left[ \delta - c, \delta \right].
\end{equation}
\end{theorem}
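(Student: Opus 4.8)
The plan is to split the Laplacian on $X(q)$ along the representation theory of the deck group, reduce to a uniform zero-free region for a family of twisted Selberg zeta functions, recast this via twisted transfer operators, and close the argument with expansion of the congruence Cayley graphs. Write $G_q \defeq \Gamma/\Gamma(q)$. A non-elementary subgroup of $\mathrm{SL}_2(\mathbb Z)$ is Zariski dense in $\mathrm{SL}_2$, so by strong approximation (Matthews--Vaserstein--Weisfeiler) $G_q \cong \mathrm{SL}_2(\mathbb Z/q\mathbb Z) \cong \prod_{p\mid q}\mathrm{SL}_2(\mathbb F_p)$ for every squarefree $q$ coprime to a fixed modulus $q_0 = q_0(\Gamma)$; the finitely many other $q$ involve only a fixed finite cover and are absorbed into $c$. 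The cover $X(q)\to X$ is Galois with group $G_q$, so $\Delta_{X(q)}$ decomposes over the irreducibles $\rho$ of $G_q$ into twisted Laplacians $\Delta_\rho$ on $X$, the summand $\rho = \mathbf 1$ reproducing $(\Delta_X,L^2(X))$ with all multiplicities. Since a finite cover only scales the $L^2$-norm by $[\Gamma:\Gamma(q)] < \infty$, the simple Patterson--Sullivan ground state of $X$ pulls back to $L^2(X(q))$, forcing $\delta(X(q)) = \delta$, the ground state of $X(q)$ to be simple, and $L^2(X)\hookrightarrow L^2(X(q))$ to carry the full multiset $\Omega(X)$. It thus suffices to exhibit $c = c(\Gamma) > 0$ such that for all squarefree $q$ and all nontrivial irreducible $\rho$ of $G_q$, the operator $\Delta_\rho$ has no $L^2$-eigenvalue with spectral parameter $s \in (\delta - c, \delta]$.

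Coding the geodesic flow on the non-wandering set of $X$ by the Schottky (Bowen--Series) symbolic dynamics, the $L^2$-eigenvalues of $\Delta_\rho$ with $\mathrm{Re}(s) > 1/2$ are precisely the zeros in that half-plane of the twisted Selberg zeta function $Z_\rho(s) = \det(1 - \mathcal L_{s,\rho})$, i.e. the $s$ at which the twisted Ruelle transfer operator
\[
(\mathcal L_{s,\rho}f)(\xi) = \sum_{\sigma\eta = \xi} |(\sigma'\eta)|^{-s}\,\rho(g_\eta)\,f(\eta),
\]
acting on vector-valued holomorphic functions on a neighborhood of the limit set, has $1$ as an eigenvalue; here $g_\eta \in G_q$ is the group element spelled along the cylinder of $\eta$. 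For $\rho = \mathbf 1$ this is the classical operator $\mathcal L_s$, whose leading eigenvalue equals $1$ exactly at $s = \delta$ (Bowen's pressure equation), is simple, and is isolated with the rest of the spectrum inside a disk of radius $\theta < 1$. Combined with the fact established above that $\Delta_\rho$ has no $L^2$-eigenvalue with $s \geq \delta$ when $\rho \neq \mathbf 1$, the problem reduces to showing that the spectral radius of $\mathcal L_{s,\rho}$ is $< 1$ for every $\mathrm{Re}(s) > \delta - c$ and every nontrivial $\rho$, with $c$ uniform in $q$.

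Assembled over the nontrivial $\rho$ (with multiplicity $\dim\rho$), the $\mathcal L_{s,\rho}$ form the transfer operator of the flow on the non-wandering set of $X(q)$ compressed to the $\ell^2_0(G_q)$-part of the fibre, so bounding $\sup_{\rho\ne\mathbf 1}\|\mathcal L_{s,\rho}^n\|$ amounts to equidistribution of the symbolically-weighted walk on $G_q$. The key input is that the Cayley graphs of $G_q$ with respect to the fixed image of a symmetric generating set of $\Gamma$ are uniform expanders --- Bourgain--Gamburd for $q$ prime, and for squarefree $q$ their extension combining the sum--product phenomenon in $\mathbb Z/q$, an $\ell^2$-flattening lemma, and quasirandomness of $\mathrm{SL}_2(\mathbb F_p)$ across all $p\mid q$ via the Chinese Remainder Theorem (this is where square-freeness enters). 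Feeding uniform expansion into a Dolgopyat-type oscillatory-cancellation estimate --- exploiting that after $n \asymp \log q$ steps the phases $\rho(g_\eta)$ no longer align --- together with the base spectral gap $\theta < 1$, which carries the long $q$-independent stretches, yields $\|\mathcal L_{s,\rho}^n\| \leq C\,q^{\beta}\,\nu^n$ with $\nu = \nu(\Gamma) < 1$ and $\beta = \beta(\Gamma)$ uniform, for all $\mathrm{Re}(s)$ in a fixed neighborhood of $\delta$. Hence the spectral radius of $\mathcal L_{s,\rho}$ is $\leq \nu < 1$; a holomorphic perturbation in $s$ --- the family $s \mapsto \mathcal L_{s,\rho}$ is analytic with $\|\mathcal L_{s,\rho} - \mathcal L_{\delta,\rho}\|$ controlled by $\delta - \mathrm{Re}(s)$ --- then covers a strip $\mathrm{Re}(s) > \delta - c$ with $c$ uniform, which is what was needed.

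The main obstacle is exactly this uniformity in $q$. Expansion produces a genuine contraction only after $n \asymp \log q$ iterates, and a naive perturbation from $\mathcal L_{\delta,\rho}$ to $\mathcal L_{s,\rho}$ over that many steps costs a factor $e^{O(n(\delta - \mathrm{Re}(s)))} = q^{O(\delta - \mathrm{Re}(s))}$, which on its own would permit only $c \asymp 1/\log q$. The fix is to separate scales: the untwisted base operator contracts at the fixed rate $\theta < 1$ off its ground state, uniformly in $q$, so it handles the long stretches, while the $\log q$-length window where expansion is invoked is applied once, against a quantity already driven below $q^{-\eta}$ by the base decay, so the $q^{O(1)}$ loss is harmless. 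Quantitatively this is the Bourgain--Gamburd $\ell^2$-flattening machine --- convolution powers of the symbolically-driven measure on $G_q$ lose a definite multiplicative factor in $\ell^2$-norm at every doubling scale until reaching $\asymp |G_q|^{-1/2}$, with non-concentration checked in every proper subgroup and coset --- run simultaneously across the prime divisors of $q$; it is the reach of this flattening for composite moduli that pins the hypothesis to $q$ squarefree, while everything else (strong approximation, the spectral-to-dynamical dictionary, the perturbation in $s$) is routine once it is in place.
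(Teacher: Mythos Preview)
The paper does not prove Theorem~\ref{thm:bgs}; it is quoted from \cite{BGS} as background motivating the paper's own result (Theorem~\ref{Thm:AverageTheorem}), and no argument for it appears anywhere in the text. There is therefore nothing in the paper to compare your proposal against.

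As a sketch of the ideas behind \cite{BGS} your outline is broadly in the right family, but it drifts toward the transfer-operator/Dolgopyat formalism of \cite{OhWinter,BKM} rather than the original argument, which is closer in spirit to Sarnak--Xue \cite{SarnakXue}: one combines the expander property of the congruence Cayley graphs with a lattice-point count and the high multiplicity forced by quasirandomness of $\mathrm{SL}_2(\mathbb F_p)$, rather than running a Dolgopyat iteration on twisted transfer operators. Two points to flag if you pursue this further. First, the statement is for arbitrary finitely generated $\Gamma\subset\mathrm{SL}_2(\mathbb Z)$, which may contain parabolics; your appeal to a ``Schottky (Bowen--Series)'' coding tacitly assumes convex cocompactness. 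Second, your final paragraph correctly identifies the uniformity-in-$q$ obstacle and the scale-separation remedy, but the sentence producing $\|\mathcal L_{s,\rho}^n\|\le C q^\beta \nu^n$ from ``expansion plus Dolgopyat'' is doing all the work and is asserted rather than argued; in particular, the non-concentration hypotheses needed to run the $\ell^2$-flattening machine for composite squarefree $q$ are not verified. As written this is a plausible roadmap, not a proof.
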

Analogous statements in terms of resonances when $\delta \leqslant \frac{1}{2}$ have been established by Oh--Winter \cite{OhWinter} and Bourgain--Kontorovich--Magee \cite{BKM}, see also \cite{soares2023uniform}. The disadvantage of Theorem \ref{thm:bgs} (as with the results in \cite{OhWinter,BKM,soares2023uniform}) is that it does not provide an \textit{explicit} gap. Apart from the intrinsic interest of this problem, explicit spectral gaps have many applications to concrete dynamical and arithmetic questions, see for instance \cite{Kon09, BK10, KO12, HK15, Ehr19}. 

Recently, Calder\'{o}n--Magee \cite{caldmagee2023} improved Theorem \ref{thm:gamburd} when $\Gamma$ is an arithmetic \textit{Schottky} group. Schottky groups stand out, among other Fuchsian groups, by their simple geometric construction, which we recall in §\ref{sec:SchottkyGroups}.

\begin{theorem}[Calder\'{o}n--Magee \cite{caldmagee2023}]\label{thm:caldmagee} For every Schottky group $\Gamma\subset \mathrm{SL}_2(\mathbb{Z})$ with $\delta > \frac{4}{5}$ and for every $\eta>0$ there exists a constant $C = C(\Gamma,\eta)>0$ such that for all $q\in \mathbb{N}$ whose prime divisors are all greater than $C$, we have
\begin{equation}\label{eq:caldmagee}
\Omega(X(q)) \cap \left[ \frac{\delta}{6} + \frac{2}{3} + \eta, \delta \right] \mseq \Omega(X) \cap \left[ \frac{\delta}{6} + \frac{2}{3} + \eta, \delta \right].
\end{equation}
\end{theorem}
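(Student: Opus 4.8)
The plan is to run the representation-theoretic mechanism of Sarnak--Xue \cite{SarnakXue} and Gamburd \cite{Gamburd1}, but to feed it a counting estimate for congruence subgroups that uses the Schottky structure of $\Gamma$ and is sharper than the one available for arbitrary finitely generated groups. Throughout, write $G_{q}:=\Gamma/\Gamma(q)$.

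\emph{Reduction and a multiplicity lower bound.} Pullback $\pi_{q}^{*}\colon L^{2}(X)\hookrightarrow L^{2}(X(q))$ is an isometric, $\Delta$-equivariant embedding, so $\Omega(X)\subseteq\Omega(X(q))$ as multisets; it therefore suffices to exclude new eigenvalues in the window $\bigl[\tfrac{\delta}{6}+\tfrac{2}{3}+\eta,\delta\bigr]$. If $\lambda=s(1-s)$ with $s>\tfrac{\delta}{6}+\tfrac{2}{3}+\eta$ is an $L^{2}$-eigenvalue of $\Delta_{X(q)}$ that is not one of $\Delta_{X}$, then there is a divisor $q_{0}\mid q$ with $q_{0}>1$ at which $\lambda$ is \emph{genuinely new}: it is orthogonal to $\pi_{q_{0}'}^{*}L^{2}(X(q_{0}'))$ for every proper $q_{0}'\mid q_{0}$. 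Since $\Gamma$ is Zariski dense in $\mathrm{SL}_{2}$ (it is a non-elementary Schottky group), strong approximation gives $G_{q_{0}}\cong\mathrm{SL}_{2}(\mathbb{Z}/q_{0})$ once the prime factors of $q_{0}$ all exceed a constant $C_{0}(\Gamma)$. Decomposing $L^{2}(X(q_{0}))$ under the right $G_{q_{0}}$-action, a genuinely new eigenvalue lives in an isotypic component attached to an irreducible representation nontrivial on every prime-power factor of $G_{q_{0}}$; such a representation has dimension $\gg_{\varepsilon}q_{0}^{1-\varepsilon}$, while $|G_{q_{0}}|\asymp q_{0}^{3}$. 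Hence $\lambda$ occurs in $L^{2}(X(q_{0}))$ with multiplicity
$$ m(s,q_{0})\ \gg_{\varepsilon}\ q_{0}^{\,1-\varepsilon}. $$

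\emph{A pre-trace upper bound.} Assume such a genuinely new $\lambda=s(1-s)$ at level $q_{0}$. For $T>0$ pick a point-pair invariant $k_{T}$ on $\mathbb{H}^{2}$ with spherical transform $h_{T}\geq 0$ on the whole $L^{2}$-spectrum, $k_{T}(z,w)=0$ for $d(z,w)>T$, $|k_{T}(z,w)|\ll e^{-d(z,w)/2}$ for $d(z,w)\leq T$, and value $h_{T}\asymp e^{(s-1/2)T}$ at the spectral parameter of $\lambda$. Applying the Selberg pre-trace inequality on $X(q_{0})$ -- the continuous spectrum and the lower eigenvalues contribute nonnegatively because $h_{T}\geq 0$ -- at a core point $z_{0}$ maximizing $\sum_{\lambda_{j}=\lambda}|\phi_{j}(z_{0})|^{2}$, and using that the convex core of $X(q_{0})$ has volume $|G_{q_{0}}|$ times that of $X$, one gets
$$ m(s,q_{0})\ \ll\ |G_{q_{0}}|\cdot\frac{1}{h_{T}}\sum_{\gamma\in\Gamma(q_{0})}\bigl|k_{T}(z_{0},\gamma z_{0})\bigr|\ \ll\ q_{0}^{3}\,e^{-(s-1/2)T}\!\!\sum_{0\leq n\leq T}\!\!e^{-n/2}\,N_{q_{0}}\!\bigl(e^{n/2}\bigr), $$
where $N_{q}(R):=\#\{\gamma\in\Gamma(q):\|\gamma\|\leq R\}$ for a fixed matrix norm.

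\emph{The counting estimate, and conclusion.} Everything rests on bounding $N_{q_{0}}(R)$ below the trivial $N_{q_{0}}(R)\leq N_{1}(R)\asymp R^{2\delta}$, and this is the principal difficulty. Two inputs combine. First, an elementary congruence--lattice bound: if $\gamma\in\Gamma(q_{0})\setminus\{I\}$, the nonzero entries of $\gamma-I$ are divisible by $q_{0}$, whence $N_{q_{0}}(R)=1$ for $R<q_{0}$ and $N_{q_{0}}(R)\ll(R/q_{0})^{3}$ always. Second, an equidistribution input for larger $R$: as $\Gamma$ is free (Schottky), its elements of norm $\leq R$ are described by the transfer operator of the expanding boundary map of $\Gamma$, and coupling the essential spectral gap of that operator with the uniform expansion of the Cayley graphs of $\mathrm{SL}_{2}(\mathbb{Z}/q)$ (Bourgain--Gamburd, and super-approximation for general $q$) shows that these elements become equidistributed in $G_{q_{0}}$ once $R$ exceeds a fixed power of $q_{0}$, yielding there a bound of the shape $N_{q_{0}}(R)\ll_{\varepsilon}q_{0}^{\varepsilon}\bigl(R^{2\delta}q_{0}^{-\kappa}+R^{\theta}\bigr)$. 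Carrying this through with the best available saving exponent $\kappa$ and range of validity -- the step where the Schottky hypothesis is genuinely needed -- is exactly what produces the constants $\tfrac{\delta}{6}$ and $\tfrac{2}{3}$. Substituting the bound for $N_{q_{0}}(R)$ into the displayed inequality and optimizing over $T$ (equivalently over $R=e^{T/2}$) yields $m(s,q_{0})\ll_{\varepsilon}q_{0}^{\,\beta(s,\delta)+\varepsilon}$ with $\beta(s,\delta)<1$ precisely when $s>\tfrac{\delta}{6}+\tfrac{2}{3}$. For $\varepsilon$ small this contradicts $m(s,q_{0})\gg_{\varepsilon}q_{0}^{1-\varepsilon}$ unless $q_{0}$ is bounded; but $q_{0}>1$ has every prime factor exceeding $C(\Gamma,\eta)$, a constant chosen to dominate $C_{0}(\Gamma)$, all implied constants, the loss from $\eta$, and the $q_{0}^{o(1)}$ factors (harmless since then $\omega(q_{0})\ll\log q_{0}/\log C$). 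Hence no such $q_{0}$ -- and therefore no new eigenvalue in the window -- exists, and $\Omega(X(q))\cap\bigl[\tfrac{\delta}{6}+\tfrac{2}{3}+\eta,\delta\bigr]\mseq\Omega(X)\cap\bigl[\tfrac{\delta}{6}+\tfrac{2}{3}+\eta,\delta\bigr]$.
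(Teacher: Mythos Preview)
This theorem is not proved in the present paper: it is quoted from \cite{caldmagee2023} as prior work, so there is no ``paper's own proof'' to compare against. That said, a few remarks on your sketch are in order.

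First, the route you outline --- Sarnak--Xue/Gamburd multiplicity machine fed by a lattice-point count $N_{q_0}(R)$ --- is \emph{not} the route Calder\'on--Magee take. Their argument, like the proof of the main theorem in this paper, goes through the Venkov--Zograf formula, the Fredholm determinant identity $Z_\Gamma(s,\rho)=\det(1-\mathcal{L}_{s,\rho})$, and the Dyatlov--Zworski refined transfer operators $\mathcal{L}_{\tau,s,\rho}$. The Schottky hypothesis is used to set up these operators and to control the partitions $Z(\tau)$, not to improve a geometric counting bound in a pre-trace inequality. The explicit threshold $\tfrac{\delta}{6}+\tfrac{2}{3}$ then drops out of balancing the size $|Z(\tau)|\asymp\tau^{-\delta}$ against operator-norm/trace bounds on $\mathcal{L}_{\tau,s,\rho}$ for the representation $\rho$ attached to the cover, together with explicit quasirandomness bounds for $\mathrm{SL}_2(\mathbb{Z}/q)$.

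Second, your sketch has a genuine gap precisely at the decisive step. You write that ``carrying this through with the best available saving exponent $\kappa$ and range of validity \dots\ is exactly what produces the constants $\tfrac{\delta}{6}$ and $\tfrac{2}{3}$,'' but you never produce such a counting theorem, and the mechanism you invoke for it --- Bourgain--Gamburd expansion / super-approximation --- is exactly what gives Theorem~\ref{thm:bgs} with an \emph{inexplicit} $c(\Gamma)$. If that input sufficed to get explicit constants, Theorem~\ref{thm:bgs} would already be explicit. The lattice bound $N_{q_0}(R)\ll (R/q_0)^3$ you quote is Gamburd's ingredient and leads to $\tfrac{5}{6}$, not to $\tfrac{\delta}{6}+\tfrac{2}{3}$; no sharper \emph{explicit} count of the shape you need is available in the literature for general $q$. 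So as written the argument does not close: the numerology is asserted rather than derived, and the tool you name does not deliver it.
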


In this paper we pursue a similar goal as in \cite{caldmagee2023}. We wish to establish explicit spectral gaps that go beyond Gamburd's $\frac{5}{36}$-result. We are interested in ``Hecke" congruence subgroups of $\Gamma$:
$$
\Gamma_0(q) \defeq \left\{  \gamma = \pmat{a}{b}{c}{d}\in \Gamma : c\equiv 0\mod q\right\}.
$$
We write $X_0(q) = \Gamma_0(q) \backslash \mathbb{H}^2$ for the associated cover of $X$. Our main result is the following:

\begin{theorem}[Main theorem]\label{Thm:AverageTheorem} Let $\Gamma\subset \mathrm{SL}_2(\mathbb{Z})$ be a Schottky group with $\delta > \frac{3}{4}$. Assume the generalized Riemann hypothesis for quadratic $L$-functions. Then for any fixed $\eta > 0$ there exists a density one subset $\mathcal{P}$ of primes such that for every $p\in \mathcal{P}$ we have
\begin{equation}\label{eq:soares}
\Omega(X_0(p)) \cap \left[ \frac{5}{6} \delta +\eta, \delta \right] \mseq \Omega(X) \cap \left[ \frac{5}{6} \delta +\eta, \delta \right].
\end{equation}
More precisely, for every $\epsilon >0$, the number of primes $p$ not satisfying \eqref{eq:soares} and not exceeding $x$ is at most $O_\epsilon(x^{1-\frac{3}{\delta}\eta +\epsilon})$.  
\end{theorem}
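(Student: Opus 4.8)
The plan is to convert the statement into a non-vanishing statement for a twisted Selberg/Ruelle zeta function, and then to prove that non-vanishing for almost every $p$ by an averaged character-sum estimate that uses GRH. Since $\Gamma$ is Schottky, it is free and Zariski dense in $\mathrm{SL}_2$, so the reduction $\Gamma\to\mathrm{SL}_2(\mathbb{F}_p)$ is surjective for all $p$ outside a fixed finite set, and $\Gamma_0(p)\backslash\Gamma$ is identified $\Gamma$-equivariantly with $\mathbb{P}^1(\mathbb{F}_p)=B\backslash\mathrm{SL}_2(\mathbb{F}_p)$. Hence the Selberg zeta of the cover factors as $Z_{X_0(p)}(s)=Z_X(s)\,Z_{X,\rho}(s)$, where $\rho$ is the $p$-dimensional Steinberg summand of the permutation representation $\mathbb{C}[\mathbb{P}^1(\mathbb{F}_p)]=\mathbf{1}\oplus\rho$ pulled back to $\Gamma$, and $Z_{X,\rho}(s)=\det(1-\mathcal{L}_{s,\rho})$ is the Fredholm determinant of the $\rho$-twisted Ruelle transfer operator attached to the symbolic coding of the Schottky limit set. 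Because $\delta>\tfrac12$, the $L^2$-eigenvalues of $X_0(p)$ with parameter in $(\tfrac12,\delta]$ are exactly the zeros of $Z_{X_0(p)}$ there, the pullbacks from $X$ account for the factor $Z_X$, so \eqref{eq:soares} is equivalent to $Z_{X,\rho}(s)\neq 0$ on $[\tfrac56\delta+\eta,\delta]$; it therefore suffices to prove $\mathrm{spr}(\mathcal{L}_{s,\rho})<1$, equivalently $\|\mathcal{L}_{s,\rho}^N\|<1$ for some $N$, throughout that interval.

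The reason for working with $\Gamma_0(p)$ rather than $\Gamma(p)$ is that the Steinberg character is elementary: for $g\in\mathrm{SL}_2(\mathbb{F}_p)$ one has $\chi_\rho(g)=\#\{x\in\mathbb{P}^1(\mathbb{F}_p):gx=x\}-1=\left(\tfrac{(\mathrm{tr}\,g)^2-4}{p}\right)+p\cdot\mathbf{1}[\,g\equiv\pm I\Mod{p}\,]$, a single quadratic Legendre symbol away from the central term. Estimating $\|\mathcal{L}_{s,\rho}^N\|_{\mathrm{HS}}^2=\Tr\big((\mathcal{L}_{s,\rho}^N)^*\mathcal{L}_{s,\rho}^N\big)$ by expanding into pairs $(a,b)$ of admissible words of length $N$ in the Schottky alphabet, the trace over $\mathbb{C}^p$ produces $\chi_\rho(\gamma_a^{-1}\gamma_b)$, and the identity above splits the bound as
\[
\|\mathcal{L}_{s,\rho}^N\|_{\mathrm{HS}}^2\ \ll\ \underbrace{p\cdot\Tr\big((\mathcal{L}_{2\,\mathrm{Re}\,s}^{(0)})^N\big)}_{\text{diagonal}}\ +\ \underbrace{\Big|\textstyle\sum_{a\neq b}c_{ab}\,\big(\tfrac{(\mathrm{tr}\,\gamma_a^{-1}\gamma_b)^2-4}{p}\big)\Big|}_{=:\ \Sigma(p)}\ +\ p\!\!\sum_{\substack{a\neq b\\ \gamma_a^{-1}\gamma_b\equiv\pm I}}\!\!|c_{ab}|,
\]
where $\mathcal{L}_t^{(0)}$ is the untwisted operator and the $c_{ab}$ are Hilbert--Schmidt coefficients that are fixed, \emph{independent of $p$}, with $c_{aa}\asymp|\gamma_a'|^{-2\,\mathrm{Re}\,s}$. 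Since $2\,\mathrm{Re}\,s\ge 2s_0>\delta$, the leading eigenvalue $\lambda_0(2\,\mathrm{Re}\,s)$ of $\mathcal{L}_{2\,\mathrm{Re}\,s}^{(0)}$ is $<1$, so the diagonal is $\ll p\,e^{-\kappa N}$ with $\kappa=\kappa(2s_0)=-\log\lambda_0(2s_0)>0$; choosing $N=N(p)\asymp\kappa^{-1}\log p$ makes it $o(1)$ uniformly in $s\in[s_0,\delta]$. A Bourgain--Gamburd-type equidistribution of $\gamma_a^{-1}\gamma_b$ in $\mathrm{SL}_2(\mathbb{F}_p)$ (valid once $N\gg\log p$) bounds the last sum by $\asymp p\cdot|\mathrm{SL}_2(\mathbb{F}_p)|^{-1}\cdot(\#\text{word pairs})\cdot e^{-2s_0\Lambda N}$, and demanding that the number of word pairs at length $N\asymp\kappa^{-1}\log p$ be at most of order $|\mathrm{SL}_2(\mathbb{F}_p)|\asymp p^3$ is the inequality that forces the threshold $s_0\ge\tfrac56\delta$ (via Bowen's formula $h_{\mathrm{top}}=\delta\Lambda$ and the linearization $\kappa(2s_0)\approx(2s_0-\delta)\Lambda$ near $\delta$). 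Thus whenever $\Sigma(p)<\tfrac12$ we obtain $\|\mathcal{L}_{s,\rho}^N\|_{\mathrm{HS}}<1$ for all $s\in[s_0,\delta]$, hence no new eigenvalue.

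It remains to count $p\le x$ with $\Sigma(p)\ge\tfrac12$, and for this I would estimate the second moment $\sum_{p\le x}|\Sigma(p)|^2$ with $N=N(x)$ fixed. Expanding the square, the inner sum over primes is $\sum_{p\le x}\left(\tfrac{AB}{p}\right)$ with $A=(\mathrm{tr}\,\gamma_a^{-1}\gamma_b)^2-4$, $B=(\mathrm{tr}\,\gamma_{a'}^{-1}\gamma_{b'})^2-4$, each of size $e^{O(N)}=x^{O(1)}$. When $AB$ is a perfect square --- which, by the ping-pong structure of $\Gamma$, happens only when the unordered pairs $\{a,b\}$ and $\{a',b'\}$ essentially coincide --- this is $\pi(x)+O(\log|AB|)$, contributing $\pi(x)\sum_{a\neq b}|c_{ab}|^2+\cdots\ll\pi(x)\,e^{(2\delta-4s_0)\Lambda N}\ll x^{1-c}$ for a fixed $c>0$, because $2s_0>\delta$ and hence the relevant Poincaré-type series converges. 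When $AB$ is not a square, $\left(\tfrac{AB}{\cdot}\right)$ is a non-principal quadratic character of conductor $\le 4|AB|\le x^{O(1)}$, and GRH for its $L$-function gives $\sum_{p\le x}\left(\tfrac{AB}{p}\right)\ll_\epsilon x^{1/2+\epsilon}$; summing against $\sum_{a\neq b}|c_{ab}|$ and optimizing $N=N(x)=(1+\epsilon)\,\kappa(2s_0)^{-1}\log x$ produces a contribution $\ll x^{\,1-\frac{3}{\delta}\eta+\epsilon}$, the exponent being what drops out of $\tfrac12+\tfrac{2(\delta-s_0)\Lambda}{\kappa(2s_0)}$ after substituting $s_0=\tfrac56\delta+\eta$ and linearizing $\kappa$ at $\delta$. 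By Chebyshev, $\#\{x/2<p\le x:\Sigma(p)\ge\tfrac12\}\ll\sum_{p\le x}|\Sigma(p)|^2\ll x^{\,1-\frac3\delta\eta+\epsilon}$, and dyadic summation gives the stated exceptional bound, hence density one.

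The main obstacle is the sharp bookkeeping at the interface of the last two paragraphs: making precise, with the exact constants, the trade-off between (i) the length $N\asymp\kappa(2s_0)^{-1}\log p$ dictated by the diagonal term, (ii) the size $e^{2\delta\Lambda N}$ of the set of word-pairs compared with $|\mathrm{SL}_2(\mathbb{F}_p)|\asymp p^3$, which controls the congruence contamination and is where the hypothesis $\delta>\tfrac34$ and the threshold $\tfrac56\delta$ are actually born, and (iii) the GRH gain in the off-diagonal, which must beat the pessimistic estimate $\sum_{a\neq b}|c_{ab}|\asymp\lambda_0(s_0)^{2N}$ by a margin large enough to keep the final exponent below $1$. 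A subsidiary but essential point, needed to prevent the ``perfect-square'' contribution in the second-moment computation from silently dominating, is the Diophantine statement that $\big((\mathrm{tr}\,\gamma_a^{-1}\gamma_b)^2-4\big)\big((\mathrm{tr}\,\gamma_{a'}^{-1}\gamma_{b'})^2-4\big)$ is a square only for trivial reasons --- a consequence of the freeness of $\Gamma$ and the genericity of traces of reduced words, but one requiring a careful argument.
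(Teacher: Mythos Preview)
Your proposal has the right skeleton --- the Venkov--Zograf factorization, the Steinberg character identity $\chi_\rho(\gamma)=\left(\tfrac{\mathrm{tr}(\gamma)^2-4}{p}\right)+p\cdot\mathbf{1}[\gamma\equiv\pm I]$, GRH for quadratic characters, and a Hilbert--Schmidt estimate on a transfer operator --- and these are indeed the ingredients the paper uses. But the execution has a genuine gap and, separately, misses a simplification that the paper exploits.

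\textbf{The second-moment detour does not close.} You split $\|\mathcal{L}_{s,\rho}^N\|_{\mathrm{HS}}^2$ into a diagonal handled pointwise and an off-diagonal $\Sigma(p)$, then propose Chebyshev on $\sum_p|\Sigma(p)|^2$. Two problems. First, the Diophantine claim that $\big((\mathrm{tr}\,\gamma_a^{-1}\gamma_b)^2-4\big)\big((\mathrm{tr}\,\gamma_{a'}^{-1}\gamma_{b'})^2-4\big)$ is a perfect square ``only for trivial reasons'' is false as stated: integers $t_1\neq t_2$ with $t_i^2-4$ sharing a squarefree kernel are plentiful (e.g.\ $t_1=3,\ t_2=7$ give $5\cdot 45=15^2$), and nothing about the freeness of $\Gamma$ prevents such traces from occurring among reduced words. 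Second, even granting the Diophantine claim, your exponent computation is actually a \emph{first}-moment computation: you sum against $\sum_{a\neq b}|c_{ab}|$, whereas expanding $|\Sigma(p)|^2$ and applying GRH termwise costs $\big(\sum|c_{ab}|\big)^2$. Done correctly, the non-square contribution is $x^{1/2+\epsilon}\big(\sum|c_{ab}|\big)^2$, and with $N\asymp\kappa(2s_0)^{-1}\log x$ this only beats $\pi(x)$ once $s_0>\tfrac{9}{10}\delta$, not $\tfrac56\delta$.

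\textbf{The paper sidesteps both issues with a first moment on the full HS norm.} Since $\|\mathcal{L}_{\tau,s,\lambda_p^0}\|_{\mathrm{HS}}^2\ge 0$, one may bound the number of bad primes by $\sum_{p\sim x}\|\mathcal{L}_{\tau,s,\lambda_p^0}\|_{\mathrm{HS}}^2$ directly (more precisely, by $\sum_p N_p(\sigma)$ via Jensen's formula). Interchanging the $p$-sum with the word sum \emph{before} taking absolute values gives the off-diagonal as $\sum_{\mathbf{a}\neq\mathbf{b}}\mathcal{I}_{\mathbf{a},\mathbf{b}}\sum_{p\sim x}\left(\tfrac{d(\gamma_{\mathbf{a}}^{-1}\gamma_{\mathbf{b}})}{p}\right)$, to which a single GRH bound $O(x^{1/2}\log^2 x)$ applies per pair --- no squares, no Diophantine question, and only one power of $\sum|\mathcal{I}_{\mathbf{a},\mathbf{b}}|$. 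Balancing against the diagonal $x^2\tau^{2\sigma-\delta}$ at $\tau\asymp x^{-3/(2\delta)}$ then yields the threshold $\tfrac56\delta$ and the exceptional bound $O_\epsilon(x^{1-\frac{3}{\delta}\eta+\epsilon})$ exactly as stated.

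\textbf{Two further comparisons.} For the ``congruence contamination'' $\gamma\equiv\pm I\bmod p$ you invoke Bourgain--Gamburd equidistribution; the paper instead uses the elementary Sarnak--Xue observation that $\gamma\equiv\pm I\bmod p$ forces $\mathrm{tr}(\gamma)\equiv\pm 2\bmod p^{2}$, hence $\|\gamma\|>p^2/3$. Choosing the resolution $\tau>cx^{-2}$ makes this term \emph{vanish identically}, and the compatibility of $\tau>cx^{-2}$ with $\tau\asymp x^{-3/(2\delta)}$ is exactly the hypothesis $\delta>\tfrac34$. Finally, the paper works with Dyatlov--Zworski \emph{refined} operators $\mathcal{L}_{\tau,s,\rho}$ rather than naive powers $\mathcal{L}_{s,\rho}^N$; this replaces your heuristic linearization $\kappa(2s_0)\approx(2s_0-\delta)\Lambda$ by the exact estimates $|Y(\tau)|\asymp\tau^{-\delta}$, $\Upsilon_{\mathbf{a}}\asymp\tau$, which is what makes the final exponent come out on the nose rather than up to unspecified constants.
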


Clearly, since $ \Gamma(q)\subset \Gamma_0(q)$ any eigenvalue of $X_0(q)$ is also an eigenvalue of $X(q)$. Thus, assuming that $\Gamma$ is a Schottky group, the conclusion of Theorem \ref{thm:caldmagee} also holds for $X_0(p)$. Note, however, that $\delta < 1$ implies
$$
 \frac{5}{6} \delta < \frac{\delta}{6} + \frac{2}{3} < \frac{5}{6},
$$
so our spectral gap \eqref{eq:soares} is larger than in Theorem \ref{thm:caldmagee} which in turn is larger than in Theorem \ref{thm:gamburd}.

By ``quadratic'' $L$-functions we mean the $L$-functions $L(s, \chi_d)$ associated to the Kronecker symbol $\chi_d(\cdot) = \left(  \frac{d}{\cdot}\right).$ The generalized Riemann hypothesis (henceforth abbreviated to GRH) for the Dirichlet character $\chi$ is the assertion that if $s\in \mathbb{C}$ satisfies $L(s,\chi)=0$ and if $s$ is a not a negative integer, then $s$ has real part $\frac{1}{2}.$ In fact, our proof only requires GRH for the characters $\chi_d$ with $d\equiv 0$ or $1 \mod 4$. 

It would be interesting to see if the methods of this work yield interesting and new results for similar families of lattices in $\mathrm{SL}_2(\mathbb{R}).$

\subsection{Thick arithmetic Schottky groups} 
At this point the reader may wonder whether Schottky subgroups of $\mathrm{SL}_2(\mathbb{Z})$ with $\delta > \frac{3}{4}$ actually exist. Since this is not completely obvious we provide some explicit examples. In fact, we can construct a sequence of Schottky groups $(\Gamma_m)_{m\in \mathbb{N}}$ such that $\delta_{m} = \delta(\Gamma_m)\to  1$ as $m\to \infty$ in the the following way:
$$
\Gamma_m \defeq \langle g_1^\pm, \dots, g_m^\pm\rangle \subset \mathrm{SL}_2(\mathbb{Z}), \quad g_k = \pmat{4k}{16k^2-1}{1}{4k}.
$$
It is not hard to verify that $g_k$ maps the exterior of the disk $B_{k} = \{ z\in \mathbb{C} : \vert z+4k\vert < 1  \}$ to the interior of $B_{-k} = \{z\in \mathbb{C} : \vert z-4k\vert < 1  \}$. Clearly, the disks $B_{1}, \dots, B_{m}, B_{-1}, \dots, B_{-m}$ are centered on the real line and have mutually disjoint closures, so $\Gamma_m$ is a Schottky group in the sense of the definition in §\ref{sec:SchottkyGroups}. Moreover, $\mathcal{F}_m = \mathbb{H}^2\smallsetminus (B_{1}\cup \dots \cup B_{m} \cup B_{-1}\cup \dots\cup B_{-m})$ provides a fundamental domain for $ \Gamma_m \backslash\mathbb{H}^2$. To see that $\delta_m \to 1$, we borrow an argument from Gamburd at the end of his paper \cite{Gamburd1}. The base eigenvalue of $\Gamma_m \backslash\mathbb{H}^2$ equals $\lambda_0(\Gamma_m) = \delta_{m} (1-\delta_{m})$. On the other hand, the variational characterization of the base eigenvalue says that
$$
\lambda_0(\Gamma_m) = \inf_{\substack{ u\in L^2(\mathcal{F}_m) \\  \nabla u \in L^2(\mathcal{F}_m) }}  \frac{\int_{\mathcal{F}_m}  \vert \nabla u \vert^2 d\mu}{\int_{\mathcal{F}_m}  u^2 d\mu}, \quad d\mu = \frac{dx dy}{y^2}.
$$
Similarly to \cite{Gamburd1} our fundamental domain $\mathcal{F}_m$ is an exterior of mutually disjoint Euclidean disks of radius one  and centered on the real line. Therefore, we can use suitable test-functions $u$ on $\mathcal{F}_m$ similar to those in \cite{Gamburd1} to show that $\lambda_0(\Gamma_m)  \to 0$. From this we conclude that $\delta_m\to 1$.

\subsection{Outline of proof}
We now sketch the proof of our main Theorem \ref{Thm:AverageTheorem}. The methods in the papers mentioned at beginning of the introduction \cite{Selberg_3/16,Iwaniec1990,LRS,Iwaniec96,KimShahidi,HKSar} do not easily generalize to thin groups and rather new tools are required here. Our proof uses some of the same basic ingredients as in \cite{NaudMagee,caldmagee2023,soares2023improved} which we specialize to our setting. Eigenvalues for the Laplacian on $X$ are also eigenvalues for the Laplacian on any finite-degree cover $X'$, such as $X'=X_0(p)$. This is a direct consequence of the Venkov--Zograf formula \eqref{VZ_ind_formula_2}. We call an eigenvalue for $X_0(p)$ ``new'' if it occurs with greater multiplicity than in $X$. Let $\lambda_p^0$ be the induced representation of the identity on $\Gamma_0(p)$ to $\Gamma$ minus the identity on $\Gamma$:
$$
\lambda_p^0 \defeq \mathrm{Ind}_{\Gamma_0(p)}^\Gamma (\textbf{1}_{\Gamma_0(p)}) \ominus \textbf{1}_{\Gamma}.
$$
New eigenvalues $\lambda = s(1-s)$ correspond to zeros $s$ of the twisted Selberg zeta function $Z_{\Gamma}(s,\lambda_p^0)$ in the interval $\left[ \frac{1}{2},\delta \right]$. Our goal is to estimate the number of these zeros in $\left[ \sigma,\delta \right]$ for any $\frac{1}{2}<\sigma < \delta$. To that effect, we recall the Fredholm determinant identity
\begin{equation}\label{intro:fred}
Z_{\Gamma}(s,\lambda_p^0) = \det(1-\mathcal{L}_{s,\lambda_p^0}),
\end{equation}
where for any representation $\rho$ of $\Gamma$, the operator $\mathcal{L}_{s,\rho}$ is the so-called $\rho$-\textit{twisted} transfer operator, defined in terms of the Schottky data used in the geometric construction of $\Gamma$, see §\ref{sec:twistedTO}. In order to produce explicit estimates, we use the \textit{refined} transfer operators $\mathcal{L}_{\tau,s,\lambda_p^0}$, see §\ref{sec:partRefOp}. This type of operators was introduced by Dyatlov--Zworski \cite{DyZw18} and can be seen as ``accelerated'' versions of the standard transfer operator $\mathcal{L}_{s,\lambda_p^0}$, where the acceleration is governed by a (small) ``resolution'' parameter $\tau > 0$. One of the key observations of \cite{DyZw18} is that 1-eigenfunctions of $\mathcal{L}_{s,\lambda_p^0}$ are also 1-eigenfunctions of $\mathcal{L}_{\tau,s,\lambda_p^0}$. This implies that zeros of \eqref{intro:fred} are also zeros of the refined zeta function
\begin{equation}\label{intro:refinedzeta}
\zeta_{\tau}(s,\lambda_p^0) \defeq \det\left( 1-\mathcal{L}_{\tau,s,\lambda_p^0}^{2} \right).
\end{equation}
The key point is choosing the parameter $\tau$ that yields the best upper bound for our final estimate. Using Jensen's formula from complex analysis, we can estimate the number $N_p(\sigma)$ of zeros of \eqref{intro:refinedzeta} in the interval $[\sigma,\delta]$ in terms of the Hilbert--Schmidt norm $\Vert \mathcal{L}_{\tau,s,\lambda_p^0}\Vert_{\mathrm{HS}}$ with $s\approx \sigma.$ Estimating this norm for an individual $p$ seems quite difficult. However, it turns out that we can non-trivially estimate the sum
\begin{equation}\label{outline:hs_sum}
\sum_{\substack{ x/2 <p\leqslant  x \\ \text{$p$ prime} }} \Vert \mathcal{L}_{\tau,s,\lambda_p^0}\Vert_{\mathrm{HS}}^2,
\end{equation}
which is the main novelty in this paper. Thanks to an explicit formula for the Hilbert--Schmidt norm (Lemma \ref{lem:HSnorm}), the task of estimating \eqref{outline:hs_sum} reduces to estimating sums of characters of the representation $\lambda_p^0$
\begin{equation}\label{outline:trace_sum}
\sum_{\substack{ x/2 <p\leqslant  x \\ \text{$p$ prime} }} \mathrm{tr}( \lambda_p^0(\gamma)  )
\end{equation}
for fixed $\gamma\in \Gamma.$ We prove that unless $\gamma\in \Gamma$ equals $\pm I$ modulo $p$, then
$$
\mathrm{tr}( \lambda_p^0(\gamma)  ) = \left( \frac{\tr(\gamma)^2-4}{p}\right),
$$
where $\left( \frac{\cdot}{p}\right)$ is the \textit{Kronecker symbol} modulo $p$. Hence, we need to understand the asymptotic behaviour of
\begin{equation}\label{intro:sumlegendre}
\sum_{\substack{ x/2 <p\leqslant  x \\ \text{$p$ prime} }} \left( \frac{d}{p}\right)
\end{equation}
as $x\to \infty$ for fixed $d$. It is here where we invoke GRH. If $d$ is an integer with $d = 0,1,2 \mod 4$, then $ \chi_d(n) = \left( \frac{d}{n}\right)$ is a Dirichlet character of conductor at most $4\vert d\vert$. Hence, assuming GRH, we obtain that for all such $d$, the modulus of \eqref{intro:sumlegendre} is bounded from above by $O_\epsilon(x^{1/2+\epsilon} d^\epsilon).$ Inserting this bound into \eqref{outline:hs_sum} and using some rather well-known distortion estimates for Schottky groups, we obtain that for all $\tau \gg x^{-2}$ and $s\approx \sigma$,
\begin{equation}\label{intro:sumHS}
\sum_{\substack{ x/2 <p\leqslant  x \\ \text{$p$ prime} }} \Vert \mathcal{L}_{\tau,s,\lambda_p^0}\Vert_{\mathrm{HS}}^2 \ll  \tau^{2\sigma}\left( \tau^{-\delta} x^2 + x^{\frac{1}{2}+\epsilon} \tau^{-2\delta} \right),
\end{equation}
see Proposition \ref{prop:sum_of_HS_norm} for a more precise statement. Taking $\tau \approx x^{-\frac{3}{2\delta}}$, the right hand side is $O_\epsilon(x^{1 - \frac{3}{\delta}(\sigma - \frac{5}{6}\delta)+\epsilon})$. This means that for a ``typical'' prime $p$ we have
$$\Vert \mathcal{L}_{\tau,s,\lambda_p^0}\Vert_{\mathrm{HS}}^2 = O_\epsilon(p^{- \frac{3}{\delta}(\sigma - \frac{5}{6}\delta)+\epsilon}),$$
which is enough to deduce Theorem \ref{Thm:AverageTheorem}. 

We point out that all the known unconditional bounds for character sums over primes (at least those known to the author) have a rather high dependency on the conductor. In our application, we need to estimate the sum \eqref{intro:sumlegendre} with $d$ as large as $d \approx x^A$ for some absolute constant $A > 0$. Using unconditional bounds only leads to weak (and actually useless) estimates in \eqref{intro:sumHS}.

\subsection{Organization of the paper}
In Section \ref{sec:prelims} we gather the basic definitions and tools needed for our main proof of Theorem \ref{Thm:AverageTheorem}. In particular, we introduce Schottky groups, refined transfer operators, and we recall the relation between eigenvalues and zeros of refined zeta functions. The proof of Theorem \ref{Thm:AverageTheorem} is then given in Section \ref{sec:proof}.

\subsection{Notation}
We write $f(x)\ll g(x)$ or $ f(x) = O(g(x)) $ interchangeably to mean that there is a constant $C>0$ such that $\vert f(x)\vert \leqslant  C \vert g(x)\vert$ for all $\vert x\vert \geqslant C$. We write $f(x)\ll_y g(x)$ or $ f(x) = O_y(g(x)) $ to mean that $C$ depends on $y$. We write $C=C(y)$ to emphasize that $C$ depends on $y$. In this paper, all the implied constants are allowed to depend on the Schottky group $\Gamma$, which we assume to be fixed throughout. We write $s=\sigma+it\in \mathbb{C}$ to mean that $\sigma$ and $t$ are the real and imaginary parts of $s$ respectively. Given a finite set $S$, we denote its cardinality by $\vert S\vert.$ We use $p\sim x$ is a shorthand for $x/2<p\leqslant  x.$

\subsection{Acknowledgments}
I thank Irving Calder\'{o}n and Michael Magee for several helpful comments. I am also grateful to Irving Calder\'{o}n for suggesting a more compact formulation and a more elegant proof of Lemma \ref{lem:inducedChar}, which helped improve the presentation.

\section{Preliminaries}\label{sec:prelims}

\subsection{Hyperbolic geometry} Let us recall some basic facts about hyperbolic surfaces, referring the reader to Borthwick's book \cite{Borthwick_book} for a comprehensive discussion. One of the standard models for the hyperbolic plane is the Poincar\'{e} half-plane
$$
\mathbb{H}^2=\{ x+iy\in \C\ :\ y>0\} 
$$
endowed with its standard metric of constant curvature $-1$,
$$
ds^2=\frac{dx^2+dy^2}{y^2}.
$$ 
The group of orientation-preserving isometries of $(\mathbb{H}^2, ds)$ is isomorphic to $\mathrm{PSL}_2(\R)$. It acts on the extended complex plane $\overline{\C} = \C \cup \{ \infty\}$ (and hence also on $\mathbb{H}^2$) by M\"{o}bius transformations
$$
\gamma=\pmat{a}{b}{c}{d}\in \mathrm{PSL}_2(\R),\quad z\in\overline{\C} \Longrightarrow  \gamma(z) = \frac{az+b}{cz+d}.
$$
An element $\gamma\in \mathrm{PSL}_2(\R)$ is either
\begin{itemize}
\item \textit{hyperbolic} if $\vert \mathrm{tr}(\gamma)\vert >2$, which implies that $\gamma$ has two distinct fixed points on the boundary $\partial \mathbb{H}^2$,
\item \textit{parabolic} if $\vert \mathrm{tr}(\gamma)\vert =2$, which implies that $\gamma$ has precisely one fixed point on $\partial \mathbb{H}^2$, or
\item \textit{elliptic} if $\vert \mathrm{tr}(\gamma)\vert < 2$, which implies that $\gamma$ has precisely one fixed point in the hyperbolic plane $\mathbb{H}^2$.
\end{itemize}

\subsection{Hyperbolic surfaces and Fuchsian groups}
Every hyperbolic surface $X$ is isometric to a quotient $\Gamma\backslash \mathbb{H}^2$, where $\Gamma$ is a \textit{Fuchsian} group, that is, a discrete subgroup $\Gamma\subset\mathrm{PSL}_2(\R)$. A Fuchsian group $\Gamma$ is called
\begin{itemize}
\item \textit{torsion-free} if it contains no elliptic elements,
\item \textit{non-cofinite} if the quotient $\Gamma\backslash \mathbb{H}^2$ has infinite-area,
\item \textit{non-elementary} if it is generated by more than one element, and
\item \textit{geometrically finite} if it is finitely generated, which is equivalent with $\Gamma\backslash \mathbb{H}^2$ being geometrically and topologically finite.
\end{itemize}

All the Fuchsian groups $\Gamma$ considered in this paper satisfy all the above conditions. The \textit{limit set} $\Lambda$ of $X$, which is defined as the set of accumulation points of all orbits of the action of $\Gamma$ on $\mathbb{H}^2$, is a Cantor-like fractal subset of the boundary $\partial \mathbb{H}^2 \cong \mathbb{R}\cup \{ \infty \}$. Its Hausdorff dimension, denoted by $\delta$, lies strictly between $0$ and $1$.

Furthermore, $\Gamma$ is called \textit{convex cocompact} if it is finitely generated and if it contains neither parabolic nor elliptic elements. This is equivalent with the \textit{convex core} of $X = \Gamma\backslash \mathbb{H}^2$ being compact. By a result of Button \cite{Button}, every infinite-area, convex cocompact hyperbolic surface $X$ can be realized as the quotient of $\mathbb{H}^2$ by a so-called \textit{Schottky group} $\Gamma$, which we will define in §\ref{sec:SchottkyGroups} below, see also \cite[Theorem~15.3]{Borthwick_book}.

We also remark that since we only work with torsion-free Fuchsian groups in this paper, it makes no difference whether we work with $\mathrm{PSL}_2(\R)$ or with $\mathrm{SL}_2(\R)$, so we will henceforth stick to $\mathrm{SL}_2(\R)$.

\subsection{Spectral theory of infinite-area hyperbolic surfaces}\label{sec:specth}
Let us review some aspects of the spectral theory of infinite-area hyperbolic surfaces. We refer the reader to \cite{Borthwick_book} for an in-depth account of the material given here. The $ L^{2} $-spectrum of the Laplace--Beltrami operator $\Delta_X $ on an infinite-area hyperbolic surface $X$ is rather sparse and was described by Lax--Phillips \cite{Lax_Phillips_I} and Patterson \cite{Patterson} as follows:
\begin{itemize}
\item The absolutely continuous spectrum is equal to $ [1/4, \infty) $.

\item The pure point spectrum is finite and contained in the interval $(0,1/4)$. In particular, there are no eigenvalues embedded in the continuous spectrum.

\item If $\delta \leqslant 1/2$ then the pure point spectrum is empty. If $\delta>1/2$ then $\lambda_0(X) = \delta(1-\delta)$ is the smallest eigenvalue.
\end{itemize}
In light of these facts, the resolvent operator
$$
R_X(s) \sceq \big( \Delta_X - s(1-s)\big)^{-1}\colon L^2(X) \to L^2(X)
$$
is defined for all $s\in\CC$ with $ \mathrm{Re}(s)>1/2$ and $s(1-s)$ not being an $L^2$-eigenvalue of $\Delta_X$. From Guillop\'{e}--Zworski \cite{GuiZwor2} we know that the resolvent extends to a meromorphic family
\begin{equation}\label{resolvent_continued}
R_X(s) \colon C_{c}^{\infty}(X) \to C^{\infty}(X)
\end{equation}
on $\CC$ with poles of finite rank. The poles of $R_X(s)$ are called the \textit{resonances} of $ X $ and the multiplicity of a resonance $\zeta$ is the rank of the residue operator of $R_X(s)$ at $s=\zeta$. We denote by $\mathcal{R}(X)$ the set multiset of resonances of $X$ repeated according to multiplicities. Resonances are contained in the half-plane $\mathrm{Re}(s)\leqslant  \delta$, with no resonances on the vertical line $\mathrm{Re}(s)=\delta$ other than a simple resonance at $s=\delta$. Note that resonances $s$ on the half-plane $\mathrm{Re}(s)>\frac{1}{2}$ correspond to eigenvalues $\lambda = s(1-s)$ of $\Delta_X$. In other words, the set $\Omega(X)$ defined in the introduction can be re-expressed as
$$
\Omega(X) \mseq \mathcal{R}(X) \cap \{ \mathrm{Re}(s) > \frac{1}{2} \}.
$$ 
In particular, if $\delta \leqslant  \frac{1}{2}$, then the set $\Omega(X) $ is empty. 

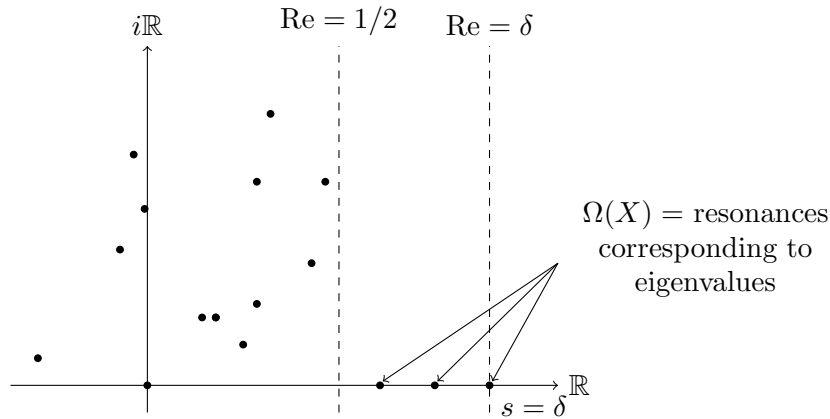
\begin{figure}[h]
\centering
\captionsetup{justification=centering}
\begin{tikzpicture}[xscale=1.8, yscale=1.8]
    \draw [->] (-1,0) -- (3,0) node [right, font=\small]  {$  \mathbb{R} $};
    \draw [->] (0,-0.2) -- (0,2.5) node [above, font=\small] {$ i \mathbb{R} $};
    \draw[dashed] (2.5,-0.2) -- (2.5,2.5) node [above, font=\small] {$ \mathrm{Re}= \delta $};
	\draw[dashed] (1.4,-0.2) -- (1.4,2.5) node [above, font=\small] {$ \mathrm{Re} = 1/2 $};
    \filldraw (2.5,0) circle (0.7pt) node[below right, font=\small] {$s=\delta$};
    \filldraw (1.7,0) circle (0.7pt);
    \filldraw (2.1,0) circle (0.7pt);
	\filldraw (0,0) circle (0.7pt);
	\filldraw (0.8,0.6) circle (0.7pt);
	\filldraw (0.4,0.5) circle (0.7pt);
	\filldraw (0.7,0.3) circle (0.7pt);
	\filldraw (-0.2,1) circle (0.7pt);
	\filldraw (-0.8,0.2) circle (0.7pt);
	\filldraw (0.9,2) circle (0.7pt);
	\filldraw (-0.1,1.7) circle (0.7pt);
	\filldraw (0.8,1.5) circle (0.7pt);
	\filldraw (-0.02,1.3) circle (0.7pt);
	\filldraw (0.5,0.5) circle (0.7pt);
	\filldraw (1.3,1.5) circle (0.7pt);
	\filldraw (1.2,0.9) circle (0.7pt);
	\filldraw (0.5,0.5) circle (0.7pt);
   	\draw (3,1) node[right, font=\small] {\begin{tabular}{c}
    $\Omega(X) =$ resonances \\
    corresponding to \\ 
    eigenvalues
\end{tabular}};
	\draw[->] (3,0.9) -- (2.52,0.03);
   	\draw[->] (3,0.9) -- (1.72,0.03);
   	\draw[->] (3,0.9) -- (2.12,0.03);
\end{tikzpicture}
\caption{Distribution of resonances for infinite-area $ \Gamma\backslash\mathbb{H}^2 $ in the case $ \delta > \frac{1}{2} $}
\label{fig:resonance_infinite_area}
\end{figure}

\subsection{Twisted Selberg zeta function}
Given a finitely generated Fuchsian group $\Gamma<\mathrm{PSL}_2(\R)$, the set of prime periodic geodesics on $X=\Gamma\backslash\mathbb{H}^2$ is bijective to the set $[\Gamma]_{\mathsf{prim}}$ of $\Gamma$-conjugacy classes of primitive hyperbolic elements in $\Gamma$. We denote by $\ell(\gamma)$ the length of the geodesic corresponding to the conjugacy class $[\gamma] \in [\Gamma]_{\mathsf{prim}}$.

The Selberg zeta function is defined for $\mathrm{Re}(s) > \delta$ by the infinite product
\begin{equation}\label{selbergZetaDefi}
Z_{\Gamma}(s)\defeq\prod_{k=0}^\infty \prod_{[\gamma] \in [\Gamma]_{\mathsf{prim}}}\left( 1 - e^{-(s+k)\ell(\gamma)}\right),
\end{equation}
and it has a meromorphic continuation to $s\in \mathbb{C}.$ By Patterson--Perry \cite{Patt_Perry} the zero set of $Z_\Gamma(s)$ consists of the so-called ``topological'' zeros at $s= -k$ for $k\in \mathbb{N}_0$, and the set of resonances, repeated according to multiplicity. Therefore, any problem about resonances and eigenvalues can be rephrased as a question about the distribution of the zeros of the Selberg zeta function.

Given a finite-dimensional, unitary representation $(\rho,V)$ of $\Gamma$, we define the \textit{twisted} Selberg zeta function by
\begin{equation}\label{def_szf_twisted}
Z_{\Gamma}(s,\rho)\defeq\prod_{k=0}^\infty \prod_{[\gamma] \in [\Gamma]_{\mathsf{prim}}}   \det\nolimits_V\left( I_V - \rho(\gamma)e^{-(s+k)\ell(\gamma)}\right).
\end{equation}
Clearly, if $\rho = \textbf{1}_\CC$ is the trivial, one-dimensional representation of $\Gamma$, then \eqref{def_szf_twisted} reduces to classical Selberg zeta function \eqref{selbergZetaDefi}. Furthermore, the product definition \eqref{def_szf_twisted} implies that we have the factorization
\begin{equation}\label{decompFormulaZeta}
Z_\Gamma(s,\rho_1 \oplus \rho_2) = Z_\Gamma(s,\rho_1)Z_\Gamma(s,\rho_2),
\end{equation}
where $\rho_1 \oplus \rho_2$ denotes the orthogonal direct sum of $\rho_1$ and $\rho_2$.

\subsection{Venkov--Zograf induction formula}\label{sec:VZ}
The reason we are interested in twisted Selberg zeta functions is because of the \textit{Venkov--Zograf induction formula} \cite{VenkovZograf,Venkov_book}. It says that if $\Gamma'$ is a finite-index subgroup of $\Gamma$, then we have 
\begin{equation}\label{VZ_ind_formula}
Z_{\Gamma'}(s) = Z_\Gamma(s,\lambda_{\Gamma/\Gamma'}).
\end{equation}
where
\begin{equation}
\lambda_{\Gamma/\Gamma'} \defeq \mathrm{Ind}_{\Gamma'}^{\Gamma}(\textbf{1}_{\Gamma'})
\end{equation}
is the \textit{induced representation} of the trivial one-dimensional representation $\textbf{1}_{\Gamma'}$ of $\Gamma'$ to the larger group $\Gamma.$ See also the more recent paper \cite{FP_szf} for a proof of this formula based on the Frobenius character formula.

Let $g_1, \dots, g_n$ be a full set of representatives in $\Gamma$ of the left cosets in $\Gamma/\Gamma'$, where $n=[\Gamma:\Gamma']$ is the index of $\Gamma'$ in $\Gamma$. Then the induced representation can be thought of as acting on the space
\begin{equation}\label{reprspaceind}
V_{\Gamma/\Gamma'} \defeq \mathrm{span}_\mathbb{C} \{ g_1, \dots, g_n \} = \left\{ \sum_{i=1}^n \alpha_i g_i : \alpha_1, \dots, \alpha_n\in \mathbb{C} \right\}.
\end{equation} 
By definition, for each $\gamma\in \Gamma$ and for each $i\in [n]$ there exists $\sigma(i)\in [n]$ and $\tilde{\gamma}\in \Gamma'$ such that $ \gamma g_i = g_{\sigma(i)} \tilde{\gamma}. $ The action of $\lambda_{\Gamma/\Gamma'}$ is then given by
$$
\lambda_{\Gamma/\Gamma'}(\gamma)\left( \sum_{i=1}^n \alpha_i g_i \right) = \sum_{i=1}^n \alpha_i g_{\sigma(i)}.
$$
In fact, $\sigma\in S_n$ is a permutation of $[n]$ and with respect to the basis $\{ g_1, \dots, g_n\}$, $\lambda_{\Gamma/\Gamma'}(\gamma)$ acts on $V_{\Gamma/\Gamma'}$ by the permutation matrix associated to $\sigma.$ Moreover, the induced representation splits as an orthogonal direct sum
$$
\lambda_{\Gamma/\Gamma'} = \textbf{1}_\Gamma  \oplus \lambda_{\Gamma/\Gamma'}^0,
$$
where $\lambda_{\Gamma/\Gamma'}^0$ is an $(n-1)$-dimensional representation acting on the subspace
\begin{equation}\label{reprspaceind0}
V_{\Gamma/\Gamma'}^0 \defeq \left\{ \sum_{i=1}^n \alpha_i g_i \in V_{\Gamma/\Gamma'} :  \sum_{i=1}^n \alpha_i = 0  \right\}.
\end{equation}
Thanks to \eqref{decompFormulaZeta} we have
\begin{equation}\label{VZ_ind_formula_2}
Z_{\Gamma'}(s) = Z_\Gamma(s) Z_\Gamma(s,\lambda_{\Gamma/\Gamma'}^0).
\end{equation}
We conclude that ``new'' resonances for $X' = \Gamma'\backslash\mathbb{H}^2$ (that is, resonances which have greater multiplicity in $X'$ than in $X$) appear as zeros of $Z_\Gamma(s,\lambda_{\Gamma/\Gamma'}^0).$ In particular, if $\lambda$ is a ``new'' eigenvalue for $X'$, then we have $\lambda = s(1-s)$ for some $s\in [\frac{1}{2},\delta]$ with $Z_\Gamma(s,\lambda_{\Gamma/\Gamma'}^0)=0.$

\subsection{Schottky groups}\label{sec:SchottkyGroups}
Let us now recall the definition of Schottky groups.
\begin{itemize}
\item Define the alphabet $ \mathcal{A} = \{ 1, \dots, 2m\} $ and for each $ a\in \mathcal{A} $ define 
$$
\overline{a} \defeq \begin{cases} a+r &\text{ if } a \in \{ 1,\dots, m\} \\ a-m &\text{ if } a \in \{ m+1,\dots, 2m\}   \end{cases}
$$

\item Fix open disks $D_1, \dots, D_{2m} \subset \mathbb{C}$ centered on the real line with mutually disjoint closures.

\item Fix isometries $ \gamma_1, \dots \gamma_{2m}\in \mathrm{SL}_2(\mathbb{R}) $ such that for all $ a\in \mathcal{A} $
$$
\text{$\gamma_a ( \overline{\mathbb{C}}\smallsetminus D_{\overline{a}} ) = D_a $ and $\gamma_{\overline{a}} = \gamma_a^{-1}.$}
$$
(In the notation of \cite[§15]{Borthwick_book} we have $m=r$ and $\gamma_a = S_a^{-1}$.)

\item Let $ \Gamma \subset \mathrm{SL}_2(\R) $ be the group generated by the elements $ \gamma_1, \dots \gamma_{2m}$. This is a free group on $ m $ generators, see for instance \cite[Lemma~15.2]{Borthwick_book}.
\end{itemize}

\begin{figure}[H]
\centering
\captionsetup{justification=centering}
\begin{tikzpicture}[xscale=1.5, yscale=1.5]
\draw (-4,0) -- (4,0) node [right, font=\small]  {$  \partial \mathbb{H}^2 $};
\draw (0,1) node [above, font=\small]  {$ \mathbb{H}^2 $};
\draw (-3,0) circle [radius = 0.4];
\draw (-1.3,0) circle [radius = 0.8];
\draw (-0.1,0) circle [radius = 0.3];
\draw (1,0) circle [radius = 0.5];
\draw (2.3,0) circle [radius = 0.4];
\draw (3.5,0) circle [radius = 0.2];
\draw (-3,-0.4) node[below,font=\small] {$ D_{1} $};
\draw (-1.3,-0.8) node[below,font=\small] {$ D_{4} $};
\draw (-0.1,-0.3) node[below,font=\small] {$ D_{2} $};
\draw (1,-0.5) node[below,font=\small] {$ D_{3} $};
\draw (2.3,-0.4) node[below,font=\small] {$ D_{5} $};
\draw (3.5,-0.2) node[below,font=\small] {$ D_{6} $};
 \draw [arrow, bend left]  (-3,0.4) to (-1.3,0.8);
 \draw [arrow, bend left]  (-0.1,0.3) to (2.3,0.4);
 \draw [arrow, bend left]  (1,0.5) to (3.5,0.2);
 \draw (-2.2,0.9) node[above,font=\small] {$ \gamma_{1} $};
 \draw (1.1,0.7) node[above,font=\small] {$ \gamma_{2} $};
 \draw (2.4,0.7) node[above,font=\small] {$ \gamma_{3} $};
\end{tikzpicture}
\caption{A configuration of Schottky disks and isometries with $ m=3 $}
\end{figure}
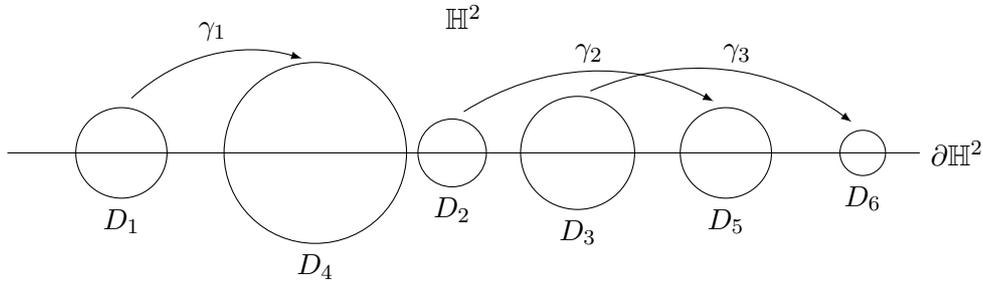

\textit{Throughout the rest of this paper, $\Gamma$ is a non-elementary Schottky group with Schottky data $D_1, \dots, D_{2m}$ and $\gamma_1, \dots, \gamma_{2m}$ as above. This assumption will not be repeated in the sequel.}

\subsection{Combinatorial notation for words}\label{sec:comNotation} 
Let $\Gamma$ be a Schottky group as in §\ref{sec:SchottkyGroups}. We will follow the combinatorial notation of Dyatlov--Zworski \cite{DyZw18} for indexing elements in the free group $\Gamma$.

\begin{itemize}
\setlength\itemsep{1em}
\item A word $\textbf{a}$ in the alphabet $\mathcal{A} =\{ 1,\dots,2m \}$ is a finite string $\textbf{a}=a_1 \dots a_n$ with $a_1, \dots, a_n \in \mathcal{A}$. For technical reasons, we also introduce the empty word $\emptyset$, a string of length zero. 

\item A word $\textbf{a}=a_1 \dots a_n$ is said to be ``reduced'' if $a_{j} \neq \overline{a_{j+1}} $ for all $j=1,\dots, n-1$. For all $n\in\mathbb{N}$ denote by $\mathcal{W}_n$ the set of (reduced) words of length $ n $:
$$
\mathcal{W}_n = \left\{  a_1\cdots a_n : \text{$a_1, \dots, a_n \in \mathcal{A}$ s.t. $a_{j} \neq \overline{a_{j+1}} $ for all $j=1,\dots, n-1$} \right\}.
$$
Moreover, put $ \mathcal{W}_0 = \{ \emptyset \} $ where $ \emptyset $ is the empty word.

\item Let $ \mathcal{W} = \bigsqcup_{n\geqslant  0} \mathcal{W}_n $ be the set of all reduced words and write $ \vert \textbf{a}\vert = n $ if $ \textbf{a}\in \mathcal{W}_n $. In other words, $\vert \textbf{a}\vert$ is the reduced word length of $\textbf{a}$. Given $m\in \mathbb{N}$ let $\mathcal{W}_{\geqslant  m} = \bigsqcup_{n\geqslant  m} \mathcal{W}_n$ the set of all reduced words whose length is at least $m$, and let $ \mathcal{W}^\circ = \mathcal{W}_{\geqslant  1}$ be the set of all non-empty reduced words.

\item Given a word $ \textbf{a} = a_1\cdots a_n \in \mathcal{W}^\circ $ write $ \textbf{a}' = a_1\cdots a_{n-1} \in \mathcal{W} $. Note that $ \mathcal{W} $ is a tree with root $ \emptyset $ and $ \textbf{a}' $ is the parent of $ \textbf{a}. $

\item For $ \textbf{a} = a_1\cdots a_n \in \mathcal{W} $ and $ \textbf{b} = b_1\cdots b_m \in \mathcal{W} $ write $ \textbf{a}\to \textbf{b} $ if either $ \textbf{a}= \emptyset $, or $ \textbf{b}= \emptyset $, or $ a_n \neq \overline{b_1} $. Note that in this case, $ \textbf{a}\textbf{b} \in \mathcal{W} $, that is, if $ \textbf{a}\to \textbf{b} $, then the concatenation $\textbf{a}\textbf{b}$ is also a reduced word.

\item Given a word $\textbf{a} = a_1\cdots a_n$ let $\overline{\textbf{a}} = \overline{a_1}\cdots \overline{a_n}$ be its ``mirror'' word.

\item Write $ \textbf{a}\prec \textbf{b} $ if $ \textbf{a} $ is a prefix $ \textbf{b} $, that is, if $ \textbf{b} = \textbf{ac} $ for some $ \textbf{c}\in \mathcal{W} $.

\item We have the one-to-one correspondence
$$
\textbf{a}= a_1\cdots a_n\in \mathcal{W} \mapsto \gamma_\textbf{a} = \gamma_{a_1}\cdots \gamma_{a_n}\in \Gamma. 
$$
Moreover, we have $ \gamma_{\textbf{ab}} = \gamma_{\textbf{a}} \gamma_{\textbf{b}} $, $ \gamma_{\textbf{a}}^{-1} = \gamma_{\overline{\textbf{a}}} $, and $\gamma_\textbf{a} = I $ if and only if $ \textbf{a} = \emptyset. $

\item For $ \textbf{a} = a_1 \cdots a_n \in \mathcal{W}^\circ $ we define the disk
$$
D_\textbf{a}:=\gamma_{\textbf{a}'}(D_{a_n}).
$$
If $ \textbf{a}\prec \textbf{b} $ then $ D_\textbf{a}\subset D_\textbf{b} $. On the other hand, if $ \textbf{a}\nprec\textbf{b} $ and $ \textbf{b}\nprec\textbf{a} $ then $ D_\textbf{a}\cap D_\textbf{b} = \emptyset $. We define the interval 
\begin{equation}\label{defiI_a}
I_\textbf{a} := D_\textbf{a}\cap \mathbb{R}
\end{equation}
and we denote by $ \vert I_\textbf{a}\vert $ its length which is equal to the diameter of $ D_\textbf{a}. $


\item Denote
$$
\text{$D = \bigsqcup_{a\in \mathcal{A}} D_a $ and $ I = \bigsqcup_{a\in \mathcal{A}} I_a$.}
$$

\item In the above notation, the limit set of $\Gamma$ may be re-expressed as follows:
$$
\Lambda = \bigcap_{n\geqslant  1} \bigsqcup_{\textbf{a}\in \mathcal{W}_n} I_\textbf{a} \subset \mathbb{R}.
$$
\end{itemize}

\subsection{Twisted transfer operators}\label{sec:twistedTO}
In what follows, let $V$ be a finite-dimensional complex vector space with hermitian inner product $\langle\cdot,\cdot\rangle_V$ and induced norm $\Vert v\Vert_V = \sqrt{\langle v,v\rangle_V}.$ Let $\rho\colon\Gamma\to\mathrm{U}(V)$ be a unitary representation. Here, ``unitary'' means that for all $\gamma\in \Gamma$ and $ v,w\in V$ we have $\langle\rho(\gamma)v,\rho(\gamma)w\rangle_V = \langle v,w\rangle_V$ and in particular $\Vert \rho(\gamma)v\Vert_V = \Vert v\Vert_V$.

We let $H^2(D,V)$ be the Hilbert space of $V$-valued, square-integrable, holomorphic functions on $D = \bigsqcup_{a\in \mathcal{A}} D_a $:
\begin{equation}\label{defi_bergman}
H^2(\Omega,V) \defeq  \left\{ \text{$f\colon D\to V$ holomorphic} \left\vert \; \left\Vert f \right\Vert < \infty \right.\right\},
\end{equation}
with $L^2$-norm given by
$$
\Vert f\Vert^2 \defeq \int_D \Vert f(z)\Vert_V^2 \dvol(z).
$$
Here ``$\vol$'' denotes the Lebesgue measure on the complex plane. On this space, we define for all $s\in \mathbb{C}$ the \textit{twisted transfer operator}
\begin{equation}\label{op}
\mathcal{L}_{s,\rho}\colon H^2(D,V) \to H^2(D,V)
\end{equation}
by the formula
\begin{equation}\label{transf_defi}
\mathcal{L}_{s,\rho}f(z) \defeq \sum_{\substack{ a \in \mathcal{A} \\ a\to b }}^{2m} \gamma_a'(z)^s \rho(\gamma_a)^{-1} f(\gamma_a (z))\ \mathrm{if}\ z\in D_b.
\end{equation}
Note that the derivative on the right satisfies $\gamma_a'(z)>0$ for all $z\in I_b = D_b\cap \R$, so the complex power $\gamma_a'(z)^s$ is uniquely defined and holomorphic for $z\in D_b$ and $s\in \mathbb{C}.$ More concretely, we define
$$
\gamma_a'(z)^s \defeq \exp( s \mathbb{L}(\gamma_a'(z)) ),
$$
where
\begin{equation}\label{complexLog}
\mathbb{L}(z) = \log \vert z\vert + \mathrm{arg}(z),
\end{equation}
with $\mathrm{arg}\colon \mathbb{C}\smallsetminus (-\infty, 0] \to (-\pi,\pi) $ being the principal value of the argument. 

When $V=\mathbb{C}$ and $\rho = \textbf{1}_\Gamma$ is the trivial, one-dimensional representation, the functional space $H^2(\Omega,V)$ reduces to the classical \textit{Bergman space} $H^2(D)$, and \eqref{transf_defi} reduces to the well-known transfer operator $\mathcal{L}_s = \mathcal{L}_{s, \textbf{1}_\Gamma}$ which can be found for instance in Borthwick's book \cite[Chapter 15]{Borthwick_book}. The operator \eqref{op} is trace class for every $s\in\mathbb{C}$ and its Fredholm determinant is equal to the twisted Selberg zeta function of the Schottky group $\Gamma$, see for instance \cite{JNS}:
\begin{equation}\label{eq:fredholm_identity}
Z_{\Gamma}(s,\rho) = \det(1-\mathcal{L}_{s,\rho}).
\end{equation}
In particular, since $\mathcal{L}_{s,\rho}$ depends holomorphically on $s\in \mathbb{C}$, this identity shows that $Z_{\Gamma}(s,\rho)$ extends to an entire function.

\subsection{Partitions and refined transfer operators}\label{sec:partRefOp}
Now we define \textit{refined} transfer operators which were introduced by Dyatlov--Zworski \cite{DyZw18}. These are generalizations of the standard transfer operator $\mathcal{L}_{s}$. Given a finite subset $Z\subset \mathcal{W}$ we put
\begin{itemize}
\item $Z'=\{ \textbf{a}' : \textbf{a}\in Z \}$ and
\item $\overline{Z} =\{ \overline{\textbf{a}} : \textbf{a}\in Z \}.$
\end{itemize}
For all $s\in \mathbb{C}$ and all finite-dimensional, unitary representations $\rho\colon \Gamma\to\mathrm{U}(V)$ we define the operator
\begin{equation}
\mathcal{L}_{Z,s,\rho}\colon H^2(D,V) \to H^2(D,V)
\end{equation}
by the formula
\begin{equation}\label{generalizedOp}
\mathcal{L}_{Z,s,\rho}f(z) \defeq \sum_{\substack{ \textbf{a}\in (\overline{Z})' \\ \textbf{a}\to b}} \gamma_a'(z)^s \rho(\gamma_a)^{-1} f(\gamma_a (z))\ \mathrm{if}\ z\in D_b.
\end{equation}
Note that $\mathcal{L}_{Z,s,\rho}$ reduces to the standard transfer operator $\mathcal{L}_{s}$ if $Z$ is taken to be $\mathcal{W}_2$, the set of reduced words of length two. 

A finite set $Z\subset \mathcal{W}^\circ$ is called a \textit{partition} if there exists $N\in \mathbb{N}$ such that for every reduced word $\textbf{a}\in \mathcal{W}$ with $\vert \textbf{a}\vert \geqslant  N$, there exists a unique $\textbf{b}\in Z$ such that $\textbf{b}\prec \textbf{a}$. In terms of the limit set, a finite set $Z\in \mathcal{W}^\circ$ is a partition if we have the disjoint union
$$
\Lambda = \bigsqcup_{\textbf{b}\in Z} ( I_\textbf{b} \cap \Lambda ).
$$
Trivial examples of partitions are the sets of reduced words $\mathcal{W}_n$ of length $n\geqslant 2$, in which case we have $\mathcal{L}_{\mathcal{W}_n,s} = \mathcal{L}_s^{n-1}.$

The fundamental fact about partitions is the following result of Dyatlov--Zworski \cite{DyZw18}:

\begin{lemma}\label{lem:partEigen}
Let $Z$ be a finite subset of $\mathcal{W}_{\geqslant  2} = \bigsqcup_{n\geqslant  2} \mathcal{W}_n$. If $Z$ is a partition, then for every $f\in H^2(D)$ the following holds true:
$$
\mathcal{L}_{s,\rho} f = f \Longrightarrow \mathcal{L}_{Z,s,\rho} f = f.
$$
\end{lemma}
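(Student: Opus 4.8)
The plan is to exploit the iterative, self-similar structure of a partition together with the functional identity $\mathcal{L}_{s,\rho}f=f$. First I would reduce to showing that for a partition $Z\subset\mathcal{W}_{\geqslant 2}$, one has the operator identity $\mathcal{L}_{Z,s,\rho}f=f$ whenever $\mathcal{L}_{s,\rho}f=f$, by relating $\mathcal{L}_{Z,s,\rho}$ to iterates $\mathcal{L}_{s,\rho}^{k}$. The heuristic is that $\mathcal{L}_{Z,s,\rho}$ is obtained by ``running the transfer operator for a variable number of steps'', following each branch of the tree $\mathcal{W}$ down to the level where it first meets the partition $Z$. Concretely, for $\textbf{a}=a_1\cdots a_n\in\mathcal{W}^\circ$ one has the cocycle identities $\gamma_{\textbf{a}}'(z)^s = \prod$ of the one-step derivatives along the word (using that all relevant derivatives are positive on the real intervals $I_\textbf{b}$, so the complex powers compose without branch ambiguity) and $\rho(\gamma_{\textbf{a}})^{-1}=\rho(\gamma_{a_n})^{-1}\cdots\rho(\gamma_{a_1})^{-1}$; hence iterating \eqref{transf_defi} gives, for $z\in D_b$,
\begin{equation}\label{eq:plan_iterate}
\mathcal{L}_{s,\rho}^{k}f(z) = \sum_{\substack{\textbf{a}\in\mathcal{W}_{k},\ \textbf{a}'\to b}} \gamma_{\textbf{a}}'(z)^s\,\rho(\gamma_{\textbf{a}})^{-1} f(\gamma_{\textbf{a}}(z)),
\end{equation}
where I have folded the index bookkeeping from \eqref{generalizedOp} into the admissibility condition. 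So $\mathcal{L}_{\mathcal{W}_{n},s,\rho}=\mathcal{L}_{s,\rho}^{n-1}$, matching the remark in the text, and more generally $\mathcal{L}_{Z,s,\rho}$ is a ``mixed iterate'' indexed by the words of $Z$ rather than by words of a fixed length.

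The second step is the combinatorial heart: given a partition $Z$ with parameter $N$, I would show that $\mathcal{L}_{Z,s,\rho}$ can be built from $\mathcal{L}_{s,\rho}^{\,N-1}$ by repeatedly ``un-refining'' the longest words. Precisely, if $\textbf{b}\in Z$ has maximal length in $Z$, then (because $Z$ is a partition) every child of $\textbf{b}'$ that is $\to$-admissible and has no other prefix in $Z$ must itself lie in $Z$; this lets one replace the single term indexed by $\textbf{b}$ in an intermediate operator by the sum of terms indexed by its children, using exactly the one-step relation \eqref{transf_defi}. Iterating this surgery, starting from $Z_0=\mathcal{W}_{N}$ (for which $\mathcal{L}_{Z_0,s,\rho}=\mathcal{L}_{s,\rho}^{N-1}$ acts as the identity on $f$ by hypothesis) and moving through a finite chain of partitions $Z_0\rightsquigarrow Z_1\rightsquigarrow\cdots\rightsquigarrow Z$, one sees that at each step the operator applied to $f$ is unchanged, because the replacement of a bundle of child-terms by the single parent-term is precisely an application of the fixed-point equation $\mathcal{L}_{s,\rho}f=f$ localized to one subtree. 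Hence $\mathcal{L}_{Z,s,\rho}f=f$.

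Alternatively, and perhaps more cleanly, I would argue directly: plug the identity $f=\mathcal{L}_{s,\rho}f$ into itself repeatedly but \emph{only along branches that have not yet reached $Z$}, i.e. define a stopping rule on the tree $\mathcal{W}$ where a branch stops once its current word has a prefix in $Z$. Since $Z$ is a partition with parameter $N$, every branch stops after at most $N$ steps, so the recursion terminates and unwinds to exactly the finite sum \eqref{generalizedOp} defining $\mathcal{L}_{Z,s,\rho}f$. The bookkeeping that $\mathcal{W}_2\leadsto\mathcal{L}_s$ and the shift from $\textbf{a}\in(\overline{Z})'$ with $\textbf{a}\to b$ in \eqref{generalizedOp} to the ``$\textbf{a}'\to b$, $\textbf{a}\in$ subtree'' description in \eqref{eq:plan_iterate} needs to be done carefully, using $\gamma_{\textbf{a}}^{-1}=\gamma_{\overline{\textbf{a}}}$ and the mirror conventions of §\ref{sec:comNotation}.

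The main obstacle I anticipate is not conceptual but notational: making the branch-stopping recursion rigorous requires carefully tracking the indexing conventions (the prime, the mirror, and the admissibility arrows $\to$) so that the telescoping of one-step transfer operators reproduces \emph{exactly} the sum in \eqref{generalizedOp}, including the correct domain condition ``$z\in D_b$'' and the correct order of the factors $\rho(\gamma_{a})^{-1}$ and the cocycle $\gamma_{\textbf{a}}'(z)^s$. A secondary subtlety is ensuring the composition of complex powers $\gamma_{a_1}'(z)^s\cdots\gamma_{a_k}'(\cdot)^s=\gamma_{\textbf{a}}'(z)^s$ holds with the principal-branch definition \eqref{complexLog}; this is fine because each factor is evaluated at a point whose image under the relevant M\"obius maps stays in a disk $D_c$ where the derivative is positive on $I_c$, so no monodromy is picked up, but it should be stated explicitly. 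Once these conventions are pinned down, the proof is a finite induction on $\sum_{\textbf{b}\in Z}(|\textbf{b}|-2)$ or on the parameter $N$.
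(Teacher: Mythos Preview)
The paper does not actually prove this lemma: it is stated as ``the following result of Dyatlov--Zworski \cite{DyZw18}'' and no argument is given beyond the citation. Your proposal therefore supplies what the paper omits, and your strategy---either the chain of partition refinements $\mathcal{W}_N\rightsquigarrow\cdots\rightsquigarrow Z$, or equivalently the branch-stopping recursion on the tree $\mathcal{W}$---is exactly the standard argument (and is the one in \cite{DyZw18}). It is correct.

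One small correction: in your displayed iterate formula the admissibility condition should read $\textbf{a}\to b$, not $\textbf{a}'\to b$. Indeed, unwinding one step of \eqref{transf_defi} for $z\in D_b$ sends $z$ into $D_{a_1}$ via $\gamma_{a_1}$ with $a_1\to b$, then into $D_{a_2}$ via $\gamma_{a_2}$ with $a_2\to a_1$, etc.; after $k$ steps the accumulated word is $\textbf{a}=a_k\cdots a_1\in\mathcal{W}_k$ and the outermost constraint is that its \emph{last} letter $a_1$ satisfies $a_1\to b$, i.e.\ $\textbf{a}\to b$. This is consistent with $\mathcal{L}_{\mathcal{W}_{k+1},s,\rho}=\mathcal{L}_{s,\rho}^{k}$, since $(\overline{\mathcal{W}_{k+1}})'=\mathcal{W}_k$ and the summation in \eqref{generalizedOp} is over $\textbf{a}\in\mathcal{W}_k$ with $\textbf{a}\to b$. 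Your identification of the two subtleties (the mirror/prime bookkeeping and the branch of the complex power) is apt, and both are handled exactly as you indicate.
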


In other words, 1-eigenfunctions of $\mathcal{L}_s$ are also 1-eigenfunctions of $\mathcal{L}_{Z,s,\rho}$, provided $Z$ is a partition. When combined with the Fredholm determinant identity \eqref{eq:fredholm_identity}, this implies that if $s\in \mathbb{C}$ is a zero of $Z_{\Gamma}(s,\rho)$, then it also is a zero of the (holomorphic) function $s\mapsto\det(1-\mathcal{L}_{Z,s,\rho})$, provided $Z \subset W_{\geqslant 2}$ is a partition.

The partitions relevant in this paper are defined as follows: for any sufficiently small $\tau > 0$, called the ``resolution parameter'', we put
$$
Z(\tau)\defeq \{ \textbf{a}\in \mathcal{W}^{\circ} : \vert I_\textbf{a}\vert \leqslant  \tau < \vert I_{\textbf{a}'}\vert \}.
$$
The set $Z(\tau)$ is a partition by virtue of the fact that the interval length $\vert I_\textbf{a}\vert$ tends to zero as $\vert \textbf{a}\vert \to \infty$. This in turn follows from the definition of the intervals $I_\textbf{a}$ in \eqref{defiI_a} and from the uniform contraction property in Lemma \ref{lem:basicDistEst} below. Finally, we define the \textit{$\tau$-refined transfer operator} by
\begin{equation}\label{defiRefinedTO}
\mathcal{L}_{\tau,s,\rho} \defeq \mathcal{L}_{Z(\tau),s,\rho}.
\end{equation}
Using \eqref{generalizedOp} $\mathcal{L}_{\tau,s,\rho}$ is explicitly given for every $f\in H^2(D,V)$ and $b\in \mathcal{A}$ by
\begin{equation}\label{defi:refinedTO}
\mathcal{L}_{\tau,s,\rho} f(z) = \sum_{\substack{ \textbf{a}\in Y(\tau) \\ \textbf{a}\to b}} \gamma_a'(z)^s \rho(\gamma_a)^{-1} f(\gamma_a (z))\ \mathrm{if}\ z\in D_b,
\end{equation}
where
\begin{equation}\label{defi:Y}
Y(\tau) \defeq \overline{Z(\tau)}'.
\end{equation}
Note that the operator \eqref{defiRefinedTO} is well-defined if and only if $Y(\tau)\subset W^\circ$, or equivalently, $Z(\tau)\subset W_{\geqslant  2}$. This condition is satisfied whenever the resolution parameter $\tau > 0$ is small enough.

The reason for using this particular family of operators is that we can control the size of $Y(\tau)$ as well as the derivatives $\gamma_\textbf{a}'$ when $\textbf{a}\in Y(\tau)$, see Lemma \ref{lem:YZ} below. This is what allows us to obtain an explicit spectral gap in Theorem \ref{Thm:AverageTheorem}.

\subsection{Some useful bounds for Schottky groups}\label{sec:boundsForDeriv}
We now record some useful estimates for Schottky groups when acting on the hyperbolic plane. Following Magee--Naud \cite{NaudMagee}, we use the following notation: for every $a\in \mathcal{A}$ we pick a point $o_a\in D_a$ and for any $\textbf{a}\in \mathcal{W}^\circ$ we set
$$
o_\textbf{a} \defeq o_a
$$
where $a\in \mathcal{A}$ is chosen such that $\textbf{a}\to a$ and we put
$$
\Upsilon_\textbf{a} \defeq \vert \gamma_\textbf{a}'(o_\textbf{a})\vert.
$$
The following basic estimates are due to Naud \cite{Naud14} and Magee--Naud \cite{NaudMagee}:

\begin{lemma}[Basic distortion estimates]\label{lem:basicDistEst}
The following estimates hold true with implied constants depending only on $\Gamma$:
\begin{enumerate}[(i)]
\item \label{part:uniformContr} Uniform contraction: There are constants $ 0 < \theta_1 < \theta_2 < 1 $ and $ C > 0 $ such that for all $ b\in \mathcal{A} $ and for all $ \textbf{a}\in \mathcal{W} $ with $ \textbf{a}\to b $ and $ z\in D_b $ we have
$$
C^{-1}\theta_1^{\vert \textbf{a}\vert} \leqslant  \vert \gamma_\textbf{a}'(z)\vert \leqslant  C\theta_2^{\vert \textbf{a}\vert}.
$$
\item Bounded distortion 1: For all $ b\in \mathcal{A} $ and for all $ \textbf{a}\in \mathcal{W} $ with $ \textbf{a}\to b $ and $ z_1, z_2\in D_b $ we have
$$
C^{-1} \leqslant  \frac{\vert \gamma_\textbf{a}'(z_1)\vert}{\vert \gamma_\textbf{a}'(z_2)\vert} \leqslant  C.
$$
\item Bounded distortion 2:  There exists a constant $C>0$ such that for all $ b_1,b_2\in \mathcal{A} $, all $z_1\in D_{b_1}$ and all $z_2\in D_{b_2}$, and all $\textbf{a}\in \mathcal{W}^\circ$ with $\textbf{a}\to b_1,b_2$ we have
$$
\left\vert \frac{\gamma'_{\textbf{a}}(z_1)}{\gamma'_{\textbf{a}}(z_2)}\right\vert \leqslant  C.
$$
\item \label{part:ups} For all $b\in \mathcal{A}$ and $z\in D_b$ with $\textbf{a}\to b$ we have $\vert \gamma'_\textbf{a}(z)\vert \asymp \Upsilon_\textbf{a}$.
\item For all $\textbf{a}\in \mathcal{W}^\circ$ we have $\Upsilon_\textbf{a}  \asymp \Upsilon_\textbf{a} $.
\item \label{part:mirrorEst} For all $\textbf{a}\in \mathcal{W}^\circ$ we have $\Upsilon_{\overline{\textbf{a}}} \asymp \Upsilon_\textbf{a} $.
\item \label{part:almostMultip} For all $\textbf{a},\textbf{b}\in \mathcal{W}^\circ$ with $\textbf{a}\to \textbf{b}$ we have $\Upsilon_{\textbf{a}\textbf{b}} \asymp \Upsilon_\textbf{a} \Upsilon_\textbf{b}. $
\item \label{part:boundComplexDeriv} For all $b\in \mathcal{A}$, $\textbf{a}\in \mathcal{W}^\circ$ with $\textbf{a}\to b$, $z\in D_b$, and $s=\sigma+it$ we have 
$$
\vert \gamma_\textbf{a}'(z)^s\vert \ll C^\sigma \Upsilon_\textbf{a}^\sigma e^{C\vert t\vert},
$$
where $C>0$ and the implied constant depend solely on $\Gamma$.
\end{enumerate}
\end{lemma}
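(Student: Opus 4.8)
\emph{Proof idea.} The lemma collects standard distortion estimates for Schottky groups, going back to Naud \cite{Naud14} and Magee--Naud \cite{NaudMagee} (see also \cite[Ch.~15]{Borthwick_book}); I indicate the shape of a self-contained argument, with all implied constants depending only on the finitely many disks $D_a$ and generators $\gamma_a$. The plan is to base everything on two inputs. First, the elementary geometric fact that for every infinite reduced word $a_1a_2\cdots$ the disks $D_{a_1\cdots a_k}$ form a strictly decreasing sequence of Euclidean disks with $\bigcap_k D_{a_1\cdots a_k}$ a single point (part of the free Schottky structure); a compactness argument then supplies an integer $N=N(\Gamma)$ with $\mathrm{diam}(D_\textbf{c})\leqslant\tfrac12\min_a\mathrm{diam}(D_a)$ for all $\textbf{c}\in\mathcal{W}_N$. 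Second, single-generator estimates: since the $\overline{D_a}$ are bounded and pairwise disjoint there is $\rho_0>0$ separating any two of them, and for admissible $a\to c$ the pole of $\gamma_a$ lies in $D_{\bar a}$, hence at distance $\geqslant\rho_0$ from $\overline{D_c}$; from $\gamma_a'(z)=(c_az+d_a)^{-2}$ one reads off that on $\overline{D_c}$ the quantity $|\gamma_a'|$ is bounded above and below, that a branch of $\log\gamma_a'$ is well defined and $M$-Lipschitz there, and that $\arg\gamma_a'$ is bounded (it vanishes on $I_c\subset\mathbb{R}$).

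Granting these, here is how I would obtain each part. For $\textbf{a}=a_1\cdots a_n$ with $\textbf{a}\to b$ and $z\in D_b$ the chain rule gives $\gamma_\textbf{a}'(z)=\prod_{k=1}^n\gamma_{a_k}'(w_k)$ with $w_n=z\in D_b$ and $w_k=\gamma_{a_{k+1}\cdots a_n}(z)\in D_{a_{k+1}}$ for $k<n$; the point is that these evaluation points are trapped in the \emph{nested} disks $\gamma_{a_{k+1}\cdots a_n}(D_b)=D_{a_{k+1}\cdots a_n b}$, whose diameters decay geometrically. For (i): group the letters into blocks of length $N$; by the choice of $N$ each block halves diameters, and converting diameters back into derivative values via the single-generator distortion bound gives $|\gamma_\textbf{a}'(z)|\leqslant C\kappa^{\lfloor n/N\rfloor}$ with $\kappa=\kappa(\Gamma)<1$, while the lower bound is the trivial $\prod_k|\gamma_{a_k}'(w_k)|\geqslant M_-^{\,n}$ (after adjusting $\theta_1<\theta_2<1$). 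For (ii), (iii) and the argument estimate, write $\log\gamma_\textbf{a}'(z_1)-\log\gamma_\textbf{a}'(z_2)=\sum_k\big(\log\gamma_{a_k}'(w_k^{(1)})-\log\gamma_{a_k}'(w_k^{(2)})\big)$; the $k$-th term is $\leqslant M|w_k^{(1)}-w_k^{(2)}|\leqslant M\,\mathrm{diam}\big(\gamma_{a_{k+1}\cdots a_n}(D_b)\big)\leqslant CM\theta_2^{\,n-k}$ by (i), so the total is $O(1)$ uniformly in $n$; taking moduli gives (ii), the same with $z_1\in D_{b_1},z_2\in D_{b_2}$ (both basepoints being trapped in $D_{a_n}$ after one step) gives (iii), and comparing with a basepoint in $I_b\cap\mathbb{R}$ where $\gamma_\textbf{a}'>0$ gives $|\arg\gamma_\textbf{a}'(z)|=O(1)$. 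Part (iv) is (iii) with $z_2=o_\textbf{a}\in D_a$ for any $a$ with $\textbf{a}\to a$, together with $\Upsilon_\textbf{a}=|\gamma_\textbf{a}'(o_\textbf{a})|$. Part (v) is immediate as printed (and the presumably intended $\Upsilon_{\textbf{a}'}\asymp\Upsilon_\textbf{a}$ follows from $\gamma_\textbf{a}=\gamma_{\textbf{a}'}\gamma_{a_n}$ and (iv)); part (vii) is $\Upsilon_{\textbf{a}\textbf{b}}=|\gamma_\textbf{a}'(\gamma_\textbf{b}(o_{\textbf{a}\textbf{b}}))|\,|\gamma_\textbf{b}'(o_{\textbf{a}\textbf{b}})|\asymp\Upsilon_\textbf{a}\Upsilon_\textbf{b}$, applying (iv) to each factor (legitimate since $\textbf{b}$ ends with a letter pointing to the basepoint disk and $\textbf{a}\to b_1$). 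Part (vi) is obtained similarly, comparing $\mathrm{diam}(D_{\overline{\textbf{a}}c})$ with $\mathrm{diam}(D_{\textbf{a}b})$ via (iv) and bounded distortion, as in \cite{Naud14,NaudMagee}. Finally (viii): $|\gamma_\textbf{a}'(z)^s|=\exp\big(\sigma\log|\gamma_\textbf{a}'(z)|-t\arg\gamma_\textbf{a}'(z)\big)=|\gamma_\textbf{a}'(z)|^{\sigma}e^{-t\arg\gamma_\textbf{a}'(z)}\leqslant C^{|\sigma|}\Upsilon_\textbf{a}^{\sigma}e^{C|t|}$ by (iv) and the uniform bound on $|\arg\gamma_\textbf{a}'(z)|$.

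The one genuinely non-formal step is the reduction of the uniform-in-$|\textbf{a}|$ statements (i)--(iii) to the per-generator estimates: the crude bounds $|\gamma_\textbf{a}'|\in[M_-^{\,n},M_+^{\,n}]$ and $|\log(\gamma_\textbf{a}'(z_1)/\gamma_\textbf{a}'(z_2))|\leqslant\sum_k M|w_k^{(1)}-w_k^{(2)}|$ are worthless on their own, since $M_+$ may exceed $1$ and there are $n$ summands. Everything hinges on the geometric decay of $\mathrm{diam}(D_{a_{k+1}\cdots a_n b})$ in $n-k$, i.e. on the shrinking-disk fact quoted at the outset (which forces the use of a block length $N>1$ rather than a single-generator contraction). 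Once (i) is in place the remaining parts are bookkeeping, with part (vi) the other place where one leans on prior work.
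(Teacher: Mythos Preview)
The paper does not prove this lemma: it simply attributes the estimates to Naud \cite{Naud14} and Magee--Naud \cite{NaudMagee} and states them without argument. Your sketch is correct and reconstructs the standard proof from those references --- the chain-rule decomposition $\gamma_\textbf{a}'(z)=\prod_k\gamma_{a_k}'(w_k)$ with the evaluation points $w_k$ trapped in nested disks of geometrically decaying diameter, which converts the per-generator Lipschitz bound on $\log\gamma_{a_k}'$ into uniform-in-$|\textbf{a}|$ bounds on $\log(\gamma_\textbf{a}'(z_1)/\gamma_\textbf{a}'(z_2))$ and on $\arg\gamma_\textbf{a}'(z)$. Your observation that part (v) as printed is tautological (and was presumably meant to read $\Upsilon_{\textbf{a}'}\asymp\Upsilon_\textbf{a}$) is also apt.
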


The following following estimates concerning the sets $Z(\tau)$ and $Y(\tau)$ are also crucial:

\begin{lemma}[Estimates for $Z(\tau)$ and $Y(\tau)$]\label{lem:YZ}
For all $\tau>0$ small enough the following estimates hold true with implied constants depending only on $\Gamma$:
\begin{enumerate}[(i)]
\item \label{part:XY1} For all $ \textbf{a}\in Z(\tau)$ we have $\Upsilon_\textbf{a} \asymp \tau.$ 
\item \label{part:XY2} For all $ \textbf{a}\in Y(\tau)$ we have $\Upsilon_\textbf{a} \asymp \tau.$
\item \label{part:XY3} $ \vert Y(\tau)\vert \asymp \vert Z(\tau)\vert \asymp \tau^{-\delta}.$
\item \label{part:normlem:YZ} For all $\textbf{a}\in Y(\tau)$ we have
$$
\Vert \gamma_{\textbf{a}}\Vert \asymp \tau^{-1/2},
$$
where $\Vert \cdot\Vert$ is the Frobenius norm
$$
\Vert \pmat{a}{b}{c}{d} \Vert = \sqrt{a^2 + b^2 + c^2 + d^2}. 
$$
\end{enumerate}
\end{lemma}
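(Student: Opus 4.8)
The plan is to prove all four parts as consequences of the basic distortion estimates in Lemma \ref{lem:basicDistEst}, together with the definition of $Z(\tau)$ and the fact that $I_\textbf{a}$ has diameter comparable to $\Upsilon_\textbf{a}$. First I would record the elementary observation that $\vert I_\textbf{a}\vert \asymp \Upsilon_\textbf{a}$: indeed $I_\textbf{a} = D_\textbf{a} \cap \R = \gamma_{\textbf{a}'}(D_{a_n}) \cap \R$, so by the mean value theorem its diameter equals $\vert \gamma_{\textbf{a}'}'(\xi)\vert$ times the diameter of $I_{a_n}$ for some $\xi$ in $D_{a_n}$, and $\vert \gamma_{\textbf{a}'}'(\xi)\vert \asymp \Upsilon_{\textbf{a}'}\asymp \Upsilon_\textbf{a}$ by parts \eqref{part:ups} and \eqref{part:almostMultip} of Lemma \ref{lem:basicDistEst} (the last step absorbing the bounded generator $\gamma_{a_n}$). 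Since the diameters of $I_{a}$, $a\in\mathcal{A}$, are bounded above and below by constants depending only on $\Gamma$, this gives $\vert I_\textbf{a}\vert \asymp \Upsilon_\textbf{a}$.

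For part \eqref{part:XY1}: if $\textbf{a}\in Z(\tau)$ then by definition $\vert I_\textbf{a}\vert \leqslant \tau < \vert I_{\textbf{a}'}\vert$. The upper bound gives $\Upsilon_\textbf{a}\ll \tau$ directly. For the lower bound, write $\textbf{a}=\textbf{a}'a_n$; then $\Upsilon_\textbf{a}\asymp \Upsilon_{\textbf{a}'}\Upsilon_{a_n}\asymp \Upsilon_{\textbf{a}'}$ since $\Upsilon_{a_n}$ is bounded below by a constant, and $\Upsilon_{\textbf{a}'}\asymp \vert I_{\textbf{a}'}\vert > \tau$. Hence $\Upsilon_\textbf{a}\gg \tau$. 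Part \eqref{part:XY2} follows from \eqref{part:XY1}: an element of $Y(\tau)=\overline{Z(\tau)}'$ is of the form $\overline{\textbf{a}}'$ for some $\textbf{a}\in Z(\tau)$; again $\Upsilon_{\overline{\textbf{a}}'}\asymp \Upsilon_{\overline{\textbf{a}}}\asymp \Upsilon_\textbf{a}\asymp \tau$ using parts \eqref{part:mirrorEst} and \eqref{part:almostMultip}. For part \eqref{part:XY3}, I would use that $\{I_\textbf{b}\cap\Lambda : \textbf{b}\in Z(\tau)\}$ is a disjoint cover of $\Lambda$ by sets of diameter $\asymp \tau$; the standard Ahlfors–David regularity of the Patterson–Sullivan measure $\mu$ on $\Lambda$ (i.e. $\mu(I_\textbf{b})\asymp \vert I_\textbf{b}\vert^\delta\asymp \tau^\delta$) then forces $\vert Z(\tau)\vert\asymp \mu(\Lambda)/\tau^\delta\asymp \tau^{-\delta}$; since the map $\textbf{a}\mapsto \overline{\textbf{a}}'$ from $Z(\tau)$ to $Y(\tau)$ is at most boundedly-to-one (the fiber over a word of length $n$ has size at most $2m$), $\vert Y(\tau)\vert\asymp\vert Z(\tau)\vert\asymp\tau^{-\delta}$.

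Finally, for part \eqref{part:normlem:YZ}: an element $\gamma=\pmat{a}{b}{c}{d}\in\mathrm{SL}_2(\R)$ with $ad-bc=1$ acting on $\mathbb{H}^2$ has derivative $\gamma'(z)=(cz+d)^{-2}$, so $\vert\gamma'(z)\vert = \vert cz+d\vert^{-2}$. For $z$ ranging over a fixed compact set (the union of the disks $D_b$, all of which lie in a bounded region of $\C$), $\vert cz+d\vert \asymp \sqrt{c^2+d^2} \asymp \Vert\gamma\Vert$ — here one uses that for a Schottky group the quantities $\vert c\vert,\vert d\vert$ cannot both be small and that $a,b$ are controlled by $c,d$ via the determinant relation and the boundedness of the domain, a point made precise e.g. in \cite{NaudMagee}; consequently $\Vert\gamma\Vert\asymp \vert\gamma'(z)\vert^{-1/2}$. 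Applying this with $\gamma=\gamma_\textbf{a}$, $\textbf{a}\in Y(\tau)$, and invoking part \eqref{part:ups} of Lemma \ref{lem:basicDistEst} together with part \eqref{part:XY2} above gives $\Vert\gamma_\textbf{a}\Vert\asymp \Upsilon_\textbf{a}^{-1/2}\asymp\tau^{-1/2}$. The main obstacle is the last comparison $\Vert\gamma\Vert\asymp\vert cz+d\vert$ on the Schottky domain: one must check that no word $\textbf{a}\in Y(\tau)$ produces a matrix whose bottom row is anomalously small relative to its norm, which is guaranteed because $\gamma_\textbf{a}$ maps a disk disjoint from the others into one of the $D_b$ and such contractions have their attracting data on the real line away from $\infty$; I would cite the corresponding estimate from \cite{NaudMagee} rather than re-deriving it.
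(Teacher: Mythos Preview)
Your approach to parts \eqref{part:XY1}--\eqref{part:XY3} is correct and in fact more detailed than the paper, which simply cites \cite{Bourgain_Dyatlov} and remarks that the estimates follow from Lemma~\ref{lem:basicDistEst}. Your route via $\vert I_\textbf{a}\vert \asymp \Upsilon_\textbf{a}$ and Ahlfors--David regularity of the Patterson--Sullivan measure is standard and matches what one would do to unpack those citations.

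For part \eqref{part:normlem:YZ} you have the same underlying idea as the paper --- relate $\Vert\gamma_\textbf{a}\Vert$ to $\Upsilon_\textbf{a}$ via the derivative formula $\gamma'(z)=(cz+d)^{-2}$ --- but your execution differs. You plan to cite \cite{NaudMagee} for the comparison $\Vert\gamma\Vert\asymp\vert cz+d\vert$; the paper's author explicitly writes that they ``could not find any reference'' for this part and instead gives a self-contained argument. That argument runs as follows: after conjugating $\Gamma$ so that $0\notin D$, one observes that the pole $-d/c = \gamma_{\overline{\textbf{a}}}(\infty)$ lies in $D_{\overline{a_n}}$, which is disjoint from $D_b$ when $\textbf{a}\to b$, giving $\vert\gamma_\textbf{a}'(z)\vert\asymp c^{-2}$ and hence $\vert c\vert\asymp\tau^{-1/2}$. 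Then, since $\gamma_\textbf{a}(0)=b/d$ and $\gamma_{\overline{\textbf{a}}}(0)=-b/a$ both lie in the bounded set $D$ which avoids $0$, one gets $\vert a\vert\asymp\vert b\vert\asymp\vert d\vert$; combining with $ad-bc=1$ (and implicitly $\gamma_\textbf{a}(\infty)=a/c\in D$) forces all four entries to be $\asymp\tau^{-1/2}$. Your heuristic that ``$a,b$ are controlled by $c,d$ via the determinant relation'' is not quite enough on its own --- the determinant relation is a single equation in four unknowns --- so if the citation to \cite{NaudMagee} does not pan out, you should be prepared to supply an argument along the paper's lines.
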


\begin{proof}
The estimates for $Z(\tau)$ can be found in \cite{Bourgain_Dyatlov}. It is then easy to deduce the same estimates for $Y(\tau)$.  Alternatively, Parts \eqref{part:XY1}-\eqref{part:XY3} can be deduced from the definitions of the sets $Z(\tau)$ and $Y(\tau)$ and Lemma \ref{lem:basicDistEst} above. Let us now prove Part \eqref{part:normlem:YZ} for which we could not find any reference. For technical reasons we may assume that zero is not contained in any of the Schottky disks $(D_b)_{b\in \mathcal{A}}$. Otherwise, we replace the Schottky group $\Gamma$ by a conjugate $g^{-1}\Gamma g$ with some suitable $g\in\mathrm{SL}_2(\mathbb{R})$. Note that this does not affect the statement, since for all $\Vert \gamma_\textbf{a}\Vert$ large enough, we have $\Vert g^{-1} \gamma_{\textbf{a}}  g \Vert \asymp \Vert \gamma_{\textbf{a}} \Vert $ with positive implied constants depending only on $g$ and $\Gamma$. Writing
$$
\gamma_{\textbf{a}} = \pmat{a}{b}{c}{d}, \quad a,b,c,d \in \mathbb{R}, \quad ad-bc= 1
$$
we calculate $x_\textbf{a} = \gamma_{\overline{\textbf{a}}}(\infty) = -d/a $ and
$$
\gamma_{\textbf{a}}'(z) = \frac{1}{(cz + d)^2} = \frac{1}{c^2(z - x_\textbf{a})^2}.
$$
Now fix $ b\in \mathcal{A} $, $ \textbf{a}\in \mathcal{W}^\circ $ with $ \mathbf{a}\to b $, and $z\in D_b$. If we write $\textbf{a}=a_1 \cdots a_n$, then the condition $\mathbf{a}\to b$ is equivalent to $\overline{a}_n \neq b$. Observe also that $x_\textbf{a} = \gamma_{\overline{\textbf{a}}}(\infty) \in D_{\overline{a}_n}$. Since the Schottky disks have mutually disjoint closures this implies that for all $z\in D_b$ the difference $\vert z-x_\textbf{a} \vert$ is bounded from above and below by some positive constants depending only on $\Gamma$. Thus we have
$$
\vert \gamma'_\textbf{a}(z)\vert \asymp  \frac{1}{c^2}.
$$
If we assume further that $\textbf{a}\in Y(\tau)$ then from Lemma \ref{lem:basicDistEst} we obtain 
$$
\tau \asymp  \Upsilon_{\textbf{a}} \asymp \frac{1}{c^2}
$$
and therefore 
$$
\vert c\vert \asymp \tau^{-1/2}.
$$
Now, according to our assumption, $0\notin D$, so both $\gamma_{\textbf{a}}(0)$ and $\gamma_{\overline{\textbf{a}}}(0)$ lie inside $D$. Hence, since $D\subset \mathbb{C}$ is bounded, we can find a constant $C>0$ depending only on $\Gamma$ such that 
$$
C^{-1} < \vert \gamma_{\textbf{a}}(0)\vert, \vert \gamma_{\overline{\textbf{a}}}(0)\vert < C.
$$
But since $\vert \gamma_{\textbf{a}}(0) \vert = \vert \frac{b}{d}\vert $ and $ \vert \gamma_{\overline{\textbf{a}}}(0)\vert = \vert \frac{b}{a}\vert $, this gives
$$
\vert a\vert \asymp \vert b\vert \asymp \vert d\vert
$$
with implied constants depending only on $\Gamma.$ Finally, combining the relation $ad-bc = 1$ with $\vert c\vert \asymp \tau^{-1/2}$ we conclude that
$$
\vert a\vert \asymp \vert b\vert \asymp \vert c\vert \asymp \vert d\vert  \asymp \tau^{-1/2}.
$$
Therefore,
$$
\Vert \gamma\Vert = \sqrt{a^2 + b^2 + c^2 + d^2} \asymp \tau^{-1/2},
$$
as claimed.
\end{proof}

\subsection{Hilbert--Schmidt norm of refined transfer operators}
Given a trace-class operator $A\colon H\to H$ on a separable Hilbert space $H$, the Hilbert--Schmidt norm is defined by
$$
\Vert A\Vert_\mathrm{HS}^2 \defeq \mathrm{tr} \left( A^{\ast} A \right),
$$
where $A^{\ast}$ denotes the adjoint of $A$. The goal of this subsection is to prove the following:

\begin{lemma}[Hilbert--Schmidt norm]\label{lem:HSnorm}
For any finite-dimensional, unitary representation $\rho\colon \Gamma\to \mathrm{U}(V)$, the Hilbert--Schmidt norm of the operator $\mathcal{L}_{\tau,s,\rho} $ is given by the formula
\begin{equation}\label{HSnormexpli}
\Vert \mathcal{L}_{\tau,s,\rho} \Vert_{\mathrm{HS}}^2 = \sum_{b\in \mathcal{A}}\sum_{\substack{ \textbf{a},\textbf{b}\in Y(\tau) \\ \textbf{a},\textbf{b}\to b }} \mathrm{tr} \left( \rho(\gamma_{\textbf{a}}^{-1} \gamma_{\textbf{b}}) \right) \mathcal{I}_{\textbf{a},\textbf{b}}^{(b)},
\end{equation}
where
$$
\mathcal{I}_{\textbf{a},\textbf{b}}^{(b)} = \int_{D_b} \gamma'_{\textbf{a}}(z)^s  \overline{\gamma'_{\textbf{b}}(z)^s } B_D(\gamma_{\textbf{a}}(z), \gamma_{\textbf{b}}(z)) \dvol(z)
$$
Here $B_{D}(z,w)$ is the reproducing kernel of the Bergman space $H^2(D).$ Moreover, for all $b\in \mathcal{A}$ and for all $\textbf{a},\textbf{b}\in Y(\tau)$ with $\textbf{a},\textbf{b}\to b$ we have the bound
\begin{equation}\label{Ybound}
\vert \mathcal{I}_{\textbf{a},\textbf{b}}^{(b)}\vert \ll  (C\tau)^{2\sigma} e^{C \vert t\vert},
\end{equation}
where $C>0$ and the implied constant depend solely on $\Gamma.$
\end{lemma}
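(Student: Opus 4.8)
The plan is to compute the Hilbert--Schmidt norm directly from the definition $\Vert \mathcal{L}_{\tau,s,\rho}\Vert_{\mathrm{HS}}^2 = \mathrm{tr}(\mathcal{L}_{\tau,s,\rho}^\ast \mathcal{L}_{\tau,s,\rho})$ by exploiting the fact that $H^2(D,V)$ has a reproducing kernel. First I would recall that the Bergman space $H^2(D)$ on the disjoint union $D = \bigsqcup_{b\in\mathcal{A}} D_b$ has a reproducing kernel $B_D(z,w)$ which, since the $D_b$ are disjoint, splits as a block-diagonal sum of the individual disk kernels $B_{D_b}$; and $H^2(D,V)\cong H^2(D)\otimes V$ has reproducing kernel $B_D(z,w)\,\mathrm{id}_V$. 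The key point is that for a trace-class (in fact Hilbert--Schmidt) integral-type operator of this form, one has the standard formula $\Vert A\Vert_{\mathrm{HS}}^2 = \int\!\int \Vert \mathcal{K}_A(z,w)\Vert^2$-type expression; more precisely, writing the operator against the reproducing kernel gives $\Vert A\Vert_{\mathrm{HS}}^2 = \iint_{D\times D} \mathrm{tr}_V\big( \mathcal{A}(z,w)\mathcal{A}(z,w)^\ast \big) \,\langle\text{kernel factors}\rangle$. The cleanest route is: for each basis-like resolution, $\mathrm{tr}(\mathcal{L}^\ast\mathcal{L}) = \sum_j \langle \mathcal{L} e_j, \mathcal{L} e_j\rangle$ over an orthonormal basis, but it is more efficient to use that for operators built from composition/multiplication the Hilbert--Schmidt inner product of two ``elementary'' operators $f\mapsto \phi\cdot (f\circ\psi)$ is an explicit integral involving $B_D(\psi_1(z),\psi_2(z))$.

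Concretely, the second step: expand $\mathcal{L}_{\tau,s,\rho}$ using its explicit form \eqref{defi:refinedTO} as a finite sum over $\textbf{a}\in Y(\tau)$, $\textbf{a}\to b$, of elementary operators $E_{\textbf{a},b}\colon f\mapsto \mathbf{1}_{D_b}(z)\,\gamma_a'(z)^s\,\rho(\gamma_a)^{-1} f(\gamma_a(z))$. Then $\Vert \mathcal{L}_{\tau,s,\rho}\Vert_{\mathrm{HS}}^2 = \sum_{\textbf{a},\textbf{b}} \langle E_{\textbf{a},b}, E_{\textbf{b},b'}\rangle_{\mathrm{HS}}$; I would show that the cross terms vanish unless $b=b'$ (because the images land in the disjoint pieces $D_{\text{something}}$ — more carefully, because the reproducing-kernel computation forces the source disk to agree), and that each surviving term equals $\mathrm{tr}(\rho(\gamma_{\textbf{a}}^{-1}\gamma_{\textbf{b}}))$ times the scalar integral $\mathcal{I}_{\textbf{a},\textbf{b}}^{(b)}$. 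The representation-theoretic factor comes out because $\mathrm{tr}_V\big(\rho(\gamma_b)^{-1\,\ast}\,\overline{(\cdots)}\cdots \rho(\gamma_a)^{-1}\big)$ collapses, via unitarity $\rho(\gamma)^{-\ast}=\rho(\gamma)$ and cyclicity of the trace, to $\mathrm{tr}(\rho(\gamma_{\textbf{a}}^{-1}\gamma_{\textbf{b}}))$; the Bergman-kernel reproducing property turns the double integral over $D_b\times D_b$ coming from $\langle f(\gamma_{\textbf{a}}\cdot), f(\gamma_{\textbf{b}}\cdot)\rangle$ into the single integral over $D_b$ with integrand $\gamma_{\textbf{a}}'(z)^s\,\overline{\gamma_{\textbf{b}}'(z)^s}\,B_D(\gamma_{\textbf{a}}(z),\gamma_{\textbf{b}}(z))$. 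I would be careful here about which variable is conjugated and to keep track of the change of variables $w=\gamma_a(z)$ and its Jacobian, matching it against the factor $\gamma_a'(z)^s$ and the definition of the adjoint.

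For the final bound \eqref{Ybound}: by Lemma \ref{lem:basicDistEst}\eqref{part:boundComplexDeriv} we have $\vert \gamma_{\textbf{a}}'(z)^s\vert \ll C^\sigma \Upsilon_{\textbf{a}}^\sigma e^{C|t|}$ and likewise for $\textbf{b}$, and by Lemma \ref{lem:YZ}\eqref{part:XY2} we have $\Upsilon_{\textbf{a}}\asymp\Upsilon_{\textbf{b}}\asymp\tau$; so the two derivative factors contribute $(C\tau)^{2\sigma}e^{C|t|}$ (absorbing constants). It then remains to bound $\int_{D_b} |B_D(\gamma_{\textbf{a}}(z),\gamma_{\textbf{b}}(z))|\,\dvol(z)$ by a constant depending only on $\Gamma$: since $\gamma_{\textbf{a}}(z),\gamma_{\textbf{b}}(z)$ stay in a fixed compact region of $D$ (bounded away from the boundary, uniformly in $\textbf{a},\textbf{b}\in\mathcal{W}^\circ$), the Bergman kernel $B_D$ is bounded there, and $D_b$ has finite area. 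I expect the main obstacle to be the bookkeeping in the reproducing-kernel step — correctly identifying the adjoint $E_{\textbf{a},b}^\ast$ as an integral operator against $B_D$, getting the complex conjugations and the holomorphic/antiholomorphic dependence right, and verifying the vanishing of cross terms between different target disks; the estimate \eqref{Ybound} itself is then routine given the cited lemmas.
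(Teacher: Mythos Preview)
Your proposal is correct and follows essentially the same route as the paper: the paper writes $\mathcal{L}_{\tau,s,\rho}$ explicitly as an integral operator with kernel $K_{\tau,s,\rho}(z,w)=\sum_{\textbf{a}\to b}\gamma_{\textbf{a}}'(z)^s\rho(\gamma_{\textbf{a}})^{-1}B_D(\gamma_{\textbf{a}}(z),w)$ and computes $\Vert\mathcal{L}\Vert_{\mathrm{HS}}^2=\iint_D\Vert K(z,w)\Vert_2^2\,\dvol\,\dvol$, whereas you decompose into elementary operators $E_{\textbf{a},b}$ and sum their HS inner products --- but these are the same calculation, both using unitarity of $\rho$ for the trace factor and the reproducing identity $\int_D B_D(\gamma_{\textbf{a}}(z),w)\overline{B_D(\gamma_{\textbf{b}}(z),w)}\,\dvol(w)=B_D(\gamma_{\textbf{a}}(z),\gamma_{\textbf{b}}(z))$ to collapse to a single integral. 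Your argument for the bound \eqref{Ybound} via Lemmas~\ref{lem:basicDistEst}\eqref{part:boundComplexDeriv} and~\ref{lem:YZ}\eqref{part:XY2}, together with boundedness of $B_D$ away from $\partial D$, is exactly what the paper does (the paper makes the last step explicit via the formula $B_{D_b}(z,w)=r_b^2/\pi^2(r_b^2-(z-c_b)(\overline{w}-c_b))^2$).
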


\begin{proof}
Similar proofs of formulas for the Hilbert--Schmidt norm for similar operators can be found in \cite[Lemma 4.7]{NaudMagee} and in \cite[Proposition~5.5]{Pohl_Soares}. We now provide an alternative but essentially equivalent argument. We denote by $B_D(z,w)$ the reproducing kernel of the classical Bergman space $H^2(D)$ over $D = \bigsqcup_{b\in \mathcal{A}} D_b,$. This kernel satisfies $\overline{B_{D}(\cdot,w)}\in H^2(D)$ for all $w\in D$ and
$$
\int_{D} B_{D}(z,w) f(w) \dvol(w) = f(z)
$$
for all $f\in H^2(D)$ and all $z\in D$, and is uniquely defined by these two properties. Hence, using the formula \eqref{defi:refinedTO}, $\mathcal{L}_{\tau,s,\rho} $ can be rewritten as the integral operator
$$
\mathcal{L}_{\tau,s,\rho}  f(z) = \int_D K_{\tau,s,\rho}(z,w) f(w)\dvol(w),
$$
where the kernel is given for all $z,w\in D $ by
$$
K_{\tau,s,\rho}(z,w) = \sum_{\substack{ \textbf{a}\in Y(\tau) \\ \textbf{a}\to b }} \gamma'_{\textbf{a}}(z)^s \rho(\gamma_{\textbf{a}})^{-1} B_D(\gamma_{\textbf{a}}(z),w), \quad \text{if $z\in D_b$.}
$$
Note that if the points $z,w\in D$ are fixed, then $K_{\tau,s,\rho}(z,w)$ is an element of the endomorphism ring $\mathrm{End}(V)$ of $V$. The Hilbert--Schmidt norm on $\mathrm{End}(V)$ is defined by 
$$
\Vert A\Vert_2 = \sqrt{\mathrm{tr}_V(A A^\ast)}, \quad A\in \mathrm{End}(V),
$$
where $\mathrm{tr}_V$ is the trace of $V$. We will drop the subscript $V$ from the notation, writing only $\mathrm{tr}_V = \mathrm{tr}$. For all $z\in D_b$ and $w\in D$ the Hilbert--Schmidt norm of $K_{\tau,s,\rho}(z,w)$ (viewed as an element on $\mathrm{End}(V)$) is given by
\begin{align*}
\Vert K_{\tau,s,\rho}(z,w)\Vert_2^2 &= \tr\left( K_{\tau,s,\rho}(z,w) K_{\tau,s,\rho}(z,w)^\ast \right)\\
&= \sum_{\substack{ \textbf{a},\textbf{b}\in Y(\tau) \\ \textbf{a},\textbf{b}\to b }} \mathrm{tr} \left( \rho(\gamma_{\textbf{a}}^{-1} \gamma_{\textbf{b}}) \right) \gamma'_{\textbf{a}}(z)^s  \overline{\gamma'_{\textbf{b}}(z)^s } B_D(\gamma_{\textbf{a}}(z),w)  \overline{B_D(\gamma_{\textbf{b}}(z),w)}.
\end{align*}
Note that for the second equality we used the unitarity of the representation $\rho$ (which implies in particular that $ \rho(\gamma_{\textbf{a}})^\ast \rho( \gamma_{\textbf{b}}) = \rho(\gamma_{\textbf{a}}^{-1} \gamma_{\textbf{b}}) $). The Hilbert--Schmidt norm of $\mathcal{L}_{\tau,s,\rho} $ can now be computed as follows:
\begin{align*}
\Vert \mathcal{L}_{\tau,s,\rho} \Vert_{\mathrm{HS}}^2 &= \int_D \int_D \Vert K_{\tau,s,\rho}(z,w)\Vert_2^2 \dvol(w)\dvol(z)\\
&= \sum_{b\in \mathcal{A}} \int_{D_b} \int_D \Vert K_{\tau,s,\rho}(z,w)\Vert_2^2 \dvol(w)\dvol(z)\\
&= \sum_{b\in \mathcal{A}} \sum_{\substack{ \textbf{a},\textbf{b}\in Y(\tau) \\ \textbf{a},\textbf{b}\to b }} \mathrm{tr} \left( \rho(\gamma_{\textbf{a}}^{-1} \gamma_{\textbf{b}}) \right)  \mathcal{I}_{\textbf{a},\textbf{b}}^{(b)},
\end{align*}
where
\begin{equation}\label{eq:intAB}
\mathcal{I}_{\textbf{a},\textbf{b}}^{(b)} = \int_{D_b} \gamma'_{\textbf{a}}(z)^s  \overline{\gamma'_{\textbf{b}}(z)^s } \left(  \int_D   B_D(\gamma_{\textbf{a}}(z),w)  \overline{B_D(\gamma_{\textbf{b}}(z),w)} \dvol(w) \right)\dvol(z).
\end{equation}
By the defining property of the Bergman kernel, we have
\begin{align*}
\int_D  B_D(\gamma_{\textbf{a}}(z),w)  \overline{B_D(\gamma_{\textbf{b}}(z),w)} \dvol(w) &= \int_D  B_D(\gamma_{\textbf{a}}(z),w)  B_D(w,\gamma_{\textbf{b}}(z)) \dvol(w) \\
&= B_D(\gamma_{\textbf{a}}(z), \gamma_{\textbf{b}}(z)),
\end{align*}
which when inserted into \eqref{eq:intAB} gives
$$
\mathcal{I}_{\textbf{a},\textbf{b}}^{(b)} = \int_{D_b} \gamma'_{\textbf{a}}(z)^s  \overline{\gamma'_{\textbf{b}}(z)^s } B_D(\gamma_{\textbf{a}}(z), \gamma_{\textbf{b}}(z)) \dvol(z),
$$
completing the proof of \eqref{HSnormexpli}. 

Let us now prove the bound in \eqref{Ybound}. Fix $b\in \mathcal{A}$ and words $\textbf{a},\textbf{b}\in Y(\tau)$ such that $\textbf{a},\textbf{b}\to b$. Combining Lemmas \ref{lem:basicDistEst} and \ref{lem:YZ} implies that for all $z\in D_b$
$$
\vert \gamma_{\textbf{a}}'(z)^{s}\vert \ll (C \tau)^{\sigma} e^{C \vert t\vert} 
$$
for some constant $C=C(\Gamma)>0$. Thus, by the triangle inequality
\begin{align*}
\vert \mathcal{I}_{\textbf{a},\textbf{b}}^{(b)}\vert &\leqslant \int_{D_b} \vert \gamma_{\textbf{a}}'(z)^{s}\vert \vert \gamma_{\textbf{b}}'(z)^{s} \vert  \vert B_D(\gamma_{\textbf{a}}(z), \gamma_{\textbf{b}}(z)) \vert\dvol(z)\\
&\ll (C\tau)^{2\sigma} e^{2C \vert t\vert} \sup_{z\in D_b} \vert B_D(\gamma_{\textbf{a}}(z), \gamma_{\textbf{b}}(z)) \vert.
\end{align*}
It remains to prove that
\begin{equation}\label{desiredBergman}
\sup_{z\in D_b} \vert B_D(\gamma_{\textbf{a}}(z), \gamma_{\textbf{b}}(z)) \vert \ll 1.
\end{equation}
Note that $B_D(z,w)$ equals zero unless the points $z$ and $w$ belong to the same Schottky disk $D_b$, in which case $B_D(z,w) = B_{D_b}(z,w)$. Letting $r_b>0$ and $c_b\in \mathbb{R}$ be the radius and the center of $D_b$, respectively, we have the following explicit formula for the Bergman kernel over $D_b$, see for instance \cite[Chapter 1]{BergmanBook}:
$$
B_{D_b}(z,w) = \frac{r_b^2}{\pi^2 \left( r_b^2 - (z-c_b)(\overline{w}-c_b) \right)^2}.
$$
Using this formula, we deduce that
\begin{equation}\label{bergmanBound}
\vert B_{D_b}(z,w)\vert \ll \frac{1}{\dist( z,\partial D_b ) \dist( w,\partial D_b )},
\end{equation}
where $\dist( z,\partial D_b )$ denotes the minimal euclidean distance from $z$ to the boundary $\partial D_b$. From the uniform contraction property in Lemma \ref{lem:basicDistEst} we deduce that for all $\textbf{a}\to b$ with $\textbf{a}\in \mathcal{W}^\circ$ we have $\dist( \gamma_{\textbf{a}}(z),\partial D )\geqslant  c$ for some constant $c = c(\Gamma)>0$. Inserting this into \eqref{bergmanBound} we obtain the desired bound \eqref{desiredBergman}. This completes the proof.
\end{proof}

\subsection{Refined zeta function and pointwise estimate} 
We now define the \textit{refined zeta function} as the Fredholm determinant
$$
\zeta_{\tau}(s,\rho) \defeq \det\left( 1-\mathcal{L}_{\tau,s,\rho}^{2} \right),
$$
which will be crucial in the next section. In particular, we will need the following:

\begin{lemma}[Pointwise estimate for $\zeta_{\tau}(s,\rho)$]\label{lem:pointwiseEst} For all $\tau > 0$ sufficiently small and $s\in \mathbb{C}$ with $\sigma = \mathrm{Re}(s) > \delta $,
\begin{equation*}
- \log \vert \zeta_{\tau}(s,\rho)\vert \leqslant  \dim(\rho)  \frac{(C \tau)^{2(\sigma-\delta)}}{1-(C \tau)^{2(\delta-\sigma)}},
\end{equation*}
where $C>0$ depends only on $\Gamma$ and $\dim(\rho)$ is the dimension of $\rho.$
\end{lemma}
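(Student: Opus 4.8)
The plan is to bound $-\log|\zeta_\tau(s,\rho)|$ by controlling the eigenvalues of $\mathcal{L}_{\tau,s,\rho}^2$, using the Fredholm determinant expansion $\det(1-\mathcal{L}_{\tau,s,\rho}^2) = \prod_j (1-\mu_j)$ where $\mu_j$ are the eigenvalues of $\mathcal{L}_{\tau,s,\rho}^2$. The key inequality is that for any trace-class operator $A$ with eigenvalues $\mu_j$ satisfying $|\mu_j|<1$, one has $-\log|\det(1-A)| \leqslant \sum_j \frac{|\mu_j|}{1-|\mu_j|}$, and if moreover $|\mu_j| \leqslant q < 1$ for all $j$, then $-\log|\det(1-A)| \leqslant \frac{1}{1-q}\sum_j |\mu_j| \leqslant \frac{1}{1-q}\sum_j |\mu_j|$. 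Since the $\mu_j$ are eigenvalues of $\mathcal{L}_{\tau,s,\rho}^2$, we have $\sum_j |\mu_j| \leqslant \sum_j |\lambda_j|^2$ where $\lambda_j$ are the eigenvalues of $\mathcal{L}_{\tau,s,\rho}$ — but actually it is cleaner to bound $\sum_j|\mu_j| \leqslant \|\mathcal{L}_{\tau,s,\rho}^2\|_{\mathrm{tr}} \leqslant \|\mathcal{L}_{\tau,s,\rho}\|_{\mathrm{HS}}^2$, using that the trace norm of a square is at most the Hilbert–Schmidt norm squared. So the two things I need are: (1) a uniform bound $\|\mathcal{L}_{\tau,s,\rho}\|_{\mathrm{HS}}^2 \leqslant \dim(\rho)\,(C\tau)^{2(\sigma-\delta)}$ (up to the implied constant), and (2) a uniform spectral radius bound $\|\mathcal{L}_{\tau,s,\rho}^2\| \leqslant (C\tau)^{2(\sigma-\delta)} < 1$ for $\sigma > \delta$ and $\tau$ small.

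For step (1), I would invoke Lemma~\ref{lem:HSnorm}. That lemma gives $\|\mathcal{L}_{\tau,s,\rho}\|_{\mathrm{HS}}^2 = \sum_{b\in\mathcal{A}} \sum_{\textbf{a},\textbf{b}\in Y(\tau),\ \textbf{a},\textbf{b}\to b} \mathrm{tr}(\rho(\gamma_{\textbf{a}}^{-1}\gamma_{\textbf{b}}))\,\mathcal{I}_{\textbf{a},\textbf{b}}^{(b)}$, with $|\mathcal{I}_{\textbf{a},\textbf{b}}^{(b)}| \ll (C\tau)^{2\sigma}e^{C|t|}$. For the real-$s$ case that suffices without the $e^{C|t|}$ factor, but the lemma is stated for $s\in\mathbb{C}$ with $\mathrm{Re}(s)>\delta$, so I must retain $e^{C|t|}$; I'll absorb it into the constant by noting the final bound in the statement has no $t$-dependence — so actually I should restrict attention to, or the estimate should be uniform in, bounded $|t|$, OR the intended reading is that $C$ may depend on a bound for $|t|$. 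I will follow the cleanest route: since $|\mathrm{tr}(\rho(\gamma))| \leqslant \dim(\rho)$ for unitary $\rho$, and since $|Y(\tau)| \asymp \tau^{-\delta}$ by Lemma~\ref{lem:YZ}\eqref{part:XY3}, and each $\textbf{a}\in Y(\tau)$ has at most $2m$ choices of $b$ with $\textbf{a}\to b$, the double sum has $\ll \tau^{-2\delta}$ terms, giving $\|\mathcal{L}_{\tau,s,\rho}\|_{\mathrm{HS}}^2 \ll \dim(\rho)\,\tau^{-2\delta}\,(C\tau)^{2\sigma} = \dim(\rho)\,(C\tau)^{2(\sigma-\delta)}$ after renaming $C$.

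For step (2), I need $\|\mathcal{L}_{\tau,s,\rho}\|^2 \leqslant (C\tau)^{2(\sigma-\delta)}$, or at least that the spectral radius of $\mathcal{L}_{\tau,s,\rho}^2$ is at most this. The operator norm of $\mathcal{L}_{\tau,s,\rho}$ is controlled by the same distortion estimates: for $z\in D_b$, $\mathcal{L}_{\tau,s,\rho}f(z)$ is a sum over $\textbf{a}\in Y(\tau)$, $\textbf{a}\to b$ of terms of size $\ll (C\tau)^\sigma e^{C|t|}\|f\|_\infty$; combined with $|Y(\tau)|\asymp\tau^{-\delta}$ and the fact that $D$ has finite volume (so $\|\cdot\|_\infty$ and $\|\cdot\|_{H^2}$ are comparable on a subspace, via the Bergman reproducing property), one gets a bound of the shape $(C\tau)^{\sigma-\delta}$ on the operator norm after squaring; taking $\tau$ small forces this below $1$. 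The mild subtlety is making the "$\|\cdot\|_\infty$ vs $\|\cdot\|_{H^2}$" comparison rigorous — one uses that for $f\in H^2(D)$ and $z$ at distance $\geqslant c$ from $\partial D$ (which holds for $\gamma_{\textbf{a}}(z)$ by uniform contraction), $|f(\gamma_{\textbf{a}}(z))| \ll \|f\|_{H^2}$ by the Bergman kernel bound \eqref{bergmanBound}. I would write $\|\mathcal{L}_{\tau,s,\rho}^2\| \leqslant \|\mathcal{L}_{\tau,s,\rho}\|^2$ and note $\|\mathcal{L}_{\tau,s,\rho}\| \leqslant \|\mathcal{L}_{\tau,s,\rho}\|_{\mathrm{HS}}$ — this actually lets me skip a separate operator-norm argument entirely: from step (1), $\|\mathcal{L}_{\tau,s,\rho}\|^2 \leqslant \|\mathcal{L}_{\tau,s,\rho}\|_{\mathrm{HS}}^2 \ll \dim(\rho)(C\tau)^{2(\sigma-\delta)}$, which is $<1$ for $\tau$ small when $\sigma>\delta$ — but this carries a $\dim(\rho)$ that I don't want inside the denominator. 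So instead: the denominator $1-(C\tau)^{2(\delta-\sigma)}$ in the statement suggests the intended bound on the spectral radius (or each $|\mu_j|$) is exactly $(C\tau)^{2(\sigma-\delta)}$ with no $\dim(\rho)$, so I do need the direct operator/spectral-radius estimate from distortion bounds (the $\dim(\rho)$ only enters through the \emph{number} of eigenvalues summed, not their size). I expect this step — getting the clean $\dim(\rho)$-free spectral radius bound $(C\tau)^{2(\sigma-\delta)}$ uniformly — to be the main obstacle, though it is entirely routine given Lemmas~\ref{lem:basicDistEst} and~\ref{lem:YZ}. Assembling: with $r := (C\tau)^{2(\sigma-\delta)}<1$ bounding every $|\mu_j|$, and $\sum_j|\mu_j| \leqslant \|\mathcal{L}_{\tau,s,\rho}\|_{\mathrm{HS}}^2 \leqslant \dim(\rho)\,r$,
\begin{equation*}
-\log|\zeta_\tau(s,\rho)| = -\sum_j \log|1-\mu_j| \leqslant \sum_j \frac{|\mu_j|}{1-|\mu_j|} \leqslant \frac{1}{1-r}\sum_j|\mu_j| \leqslant \dim(\rho)\,\frac{r}{1-r},
\end{equation*}
which is exactly the claimed inequality after absorbing constants into $C$.
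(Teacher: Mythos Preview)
Your argument is essentially sound but follows a different path from the paper's, and it does not quite recover the statement for general complex $s$. The paper expands $-\log|\det(1-A)|\leqslant \sum_{k\geqslant 1}\frac{1}{k}|\mathrm{tr}(A^k)|$ with $A=\mathcal{L}_{\tau,s,\rho}^{2}$, then cites a bound $|\mathrm{tr}(\mathcal{L}_{\tau,s,\rho}^{2k})|\leqslant \dim(\rho)(C\tau)^{2k\sigma}|Z(\tau)|^{2k}$ from Magee--Naud, combines with $|Z(\tau)|\ll\tau^{-\delta}$, and sums the geometric series. You instead control each eigenvalue $\mu_j$ of $\mathcal{L}_{\tau,s,\rho}^{2}$ by a spectral-radius bound $r$ and control $\sum_j|\mu_j|$ by $\Vert\mathcal{L}_{\tau,s,\rho}\Vert_{\mathrm{HS}}^{2}$.

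The real difference is the dependence on $t=\mathrm{Im}(s)$. The paper's trace bound is genuinely $t$-free: the trace of each composition piece is computed at its attracting fixed point $x_*\in\mathbb{R}$, where $\gamma'(x_*)>0$ and hence $|\gamma'(x_*)^{s}|=\gamma'(x_*)^{\sigma}$. By contrast, both your Hilbert--Schmidt estimate (via Lemma~\ref{lem:HSnorm}, which carries the factor $e^{C|t|}$ in \eqref{Ybound}) and any direct operator-norm bound integrate $|\gamma_{\mathbf a}'(z)^{s}|$ over the complex disks $D_b$ and therefore pick up $e^{C|t|}$, exactly the issue you flagged but did not resolve. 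So your proof establishes the inequality only with an extra $e^{C|t|}$ on the right, i.e.\ effectively only for real $s$. That is in fact all that is used downstream --- Lemma~\ref{lem:pointwiseEst} is invoked only at the real point $s=\sigma_0$ in the proof of Proposition~\ref{prop:SumOfHilbertSchmidtNorms} --- so your route suffices for the paper, but it does not prove the lemma exactly as stated. Your step~(2), the $\dim(\rho)$-free operator-norm bound, is indeed routine from the pointwise Bergman estimate $\Vert f(w)\Vert_V\leqslant B_D(w,w)^{1/2}\Vert f\Vert$ combined with Lemmas~\ref{lem:basicDistEst} and~\ref{lem:YZ}, and yields $\Vert\mathcal{L}_{\tau,s,\rho}\Vert\ll (C\tau)^{\sigma-\delta}e^{C|t|}$ as you anticipated.
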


\begin{proof}
For every separable Hilbert space $H$ and for every trace class operator $A\colon H\to H$ with $\Vert A\Vert_H < 1$, we have the absolutely convergent series expansion
\begin{equation}
\det(1-A) = \exp\left( -\sum_{k=1}^\infty \frac{1}{k} \mathrm{tr}(A^k) \right),
\end{equation}
see for instance \cite{Gohberg_Goldberg_Krupnik}. Taking absolute values and logarithms on both sides yields
\begin{equation}
- \log \vert \det(1-A)\vert \leqslant   \sum_{k=1}^\infty \frac{1}{k} \vert \mathrm{Re} (\mathrm{tr}(A^k)) \vert \leqslant  \sum_{k=1}^\infty \frac{1}{k} \vert \mathrm{tr}(A^k) \vert.
\end{equation}
Applying this to $A=\mathcal{L}_{\tau,s,\rho}^{2}$ with $\sigma = \mathrm{Re}(s)>\delta$ gives
\begin{equation}\label{ineq_exp}
- \log \vert \zeta_{\tau}(s,\rho)\vert \leqslant  \sum_{k=1}^\infty \frac{1}{k} \vert \mathrm{tr}(\mathcal{L}_{\tau,s,\rho}^{2k})\vert.
\end{equation}
From the proof of Proposition 4.8 in Magee--Naud \cite{NaudMagee}, the traces on the right are bounded by
\begin{equation*}
\vert \mathrm{tr}( \mathcal{L}_{\tau,s,\rho}^{2k} ) \vert \leqslant  \dim(\rho) (C \tau)^{2k \sigma} \vert Z(\tau)\vert^{2k},
\end{equation*}
where $C>0$ depends only on $\Gamma.$ By Lemma \ref{lem:YZ} we also have
$$
\vert Z(\tau)\vert\ll \tau^{-\delta}.
$$
Combining the previous two estimates we obtain (possibly with a larger constant $C$)
\begin{equation*}
\vert \mathrm{tr}( \mathcal{L}_{\tau,s,\rho}^{2k} ) \vert \leqslant  \dim(\rho) (C \tau)^{2k (\sigma-\delta)}.
\end{equation*}
Returning to \eqref{ineq_exp} and using the geometric series formula we obtain for all $\tau>0$ small enough,
$$
- \log \vert \zeta_{\tau}(s,\rho)\vert \leqslant  \dim(\rho) \sum_{k=1}^\infty (C \tau)^{2k (\sigma-\delta)} =  \dim(\rho)  \frac{(C \tau)^{2(\sigma-\delta)}}{1-(C \tau)^{2(\delta-\sigma)}},
$$
as claimed.
\end{proof}

\section{Proof of Theorem \ref{Thm:AverageTheorem}}\label{sec:proof}

The goal of this section is to prove our main Theorem \ref{Thm:AverageTheorem}.

\subsection{Reducing the proof to counting zeros}\label{sec:setup}
We say that $\lambda$ is a ``new'' eigenvalue for the Laplacian on $X_0(p)=\Gamma_0(p)\backslash\mathbb{H}^2$ if it occurs with greater multiplicity than in $X=\Gamma\backslash\mathbb{H}^2$ and we define
$$
\Omega^{\mathrm{new}}(X_0(p)) \defeq \left\{ s\in \left[\frac{1}{2},\delta\right] : \text{$\lambda = s(1-s)$ is a new eigenvalue for $X_0(p)$} \right\}.
$$
We denote by $N_p(\sigma)$ the number of new eigenvalues $\lambda = s(1-s)$ with $s\geqslant  \sigma$, or equivalently,
$$
N_p(\sigma) \defeq  \# \Omega^{\mathrm{new}}(X_0(p)) \cap [\sigma,\delta].
$$
This section is actually devoted to prove the following theorem from which Theorem \ref{Thm:AverageTheorem} follows directly:

\begin{theorem}[Main theorem, elaborated]\label{Thm:AverageTheorem_recalled} Let $\Gamma\subset \mathrm{SL}_2(\mathbb{Z})$ be a Schottky group with $\delta > \frac{3}{4}$. Assume GRH for quadratic $L$-functions. Then, for all $x$ sufficiently large depending on $\Gamma$ and for all $\epsilon >0$ we have
\begin{equation}\label{sumOverPrimesOfNumberOfZeros}
\sum_{\substack{ p \leqslant  x \\ \text{$p$ prime} }} N_p(\sigma) \leqslant C(\epsilon,\Gamma) x^{ 1-\frac{3}{\delta}( \sigma-\frac{5}{6}\delta )+\epsilon}
\end{equation}
\end{theorem}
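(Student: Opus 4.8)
The goal is to bound $\sum_{p\leqslant x} N_p(\sigma)$, where $N_p(\sigma)$ counts zeros of $Z_\Gamma(s,\lambda_p^0)$ in $[\sigma,\delta]$, equivalently (by Lemma \ref{lem:partEigen} and the Fredholm identity) zeros of the refined zeta function $\zeta_\tau(s,\lambda_p^0)=\det(1-\mathcal{L}_{\tau,s,\lambda_p^0}^2)$ in that interval, for any admissible resolution $\tau>0$. The overall strategy has four stages: (1) convert the zero-count $N_p(\sigma)$ into an analytic quantity via Jensen's formula applied to $\zeta_\tau(s,\lambda_p^0)$ on a disk in the $s$-plane containing $[\sigma,\delta]$; (2) feed in the pointwise upper bound of Lemma \ref{lem:pointwiseEst} on the part of the boundary with $\mathrm{Re}(s)>\delta$, and a lower bound coming from the Hilbert--Schmidt norm $\Vert\mathcal{L}_{\tau,s,\lambda_p^0}\Vert_{\mathrm{HS}}$ near $s\approx\sigma$ (so that $N_p(\sigma)\ll \Vert\mathcal{L}_{\tau,s,\lambda_p^0}\Vert_{\mathrm{HS}}^2 + (\text{lower order in }\tau)$, with an implied constant depending only on $\Gamma$); (3) sum over $p\sim x$ dyadically and use the explicit formula of Lemma \ref{lem:HSnorm} together with the character-sum computation $\mathrm{tr}(\lambda_p^0(\gamma))=\left(\tfrac{\mathrm{tr}(\gamma)^2-4}{p}\right)$ (for $\gamma\not\equiv\pm I\bmod p$) to reduce $\sum_{p\sim x}\Vert\mathcal{L}_{\tau,s,\lambda_p^0}\Vert_{\mathrm{HS}}^2$ to sums of Kronecker symbols $\sum_{p\sim x}\left(\tfrac{d}{p}\right)$ over the finitely many (for fixed $\tau$) values $d=\mathrm{tr}(\gamma_{\textbf a}^{-1}\gamma_{\textbf b})^2-4$ with $\textbf a,\textbf b\in Y(\tau)$; (4) invoke GRH to bound each such character sum by $O_\epsilon(x^{1/2+\epsilon}|d|^\epsilon)$, control $|d|\ll\tau^{-O(1)}$ via Lemma \ref{lem:YZ}(iv), count the number of terms via $|Y(\tau)|\asymp\tau^{-\delta}$, optimize $\tau\approx x^{-3/(2\delta)}$, and sum the dyadic pieces.

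\textbf{Stage 1 and 2 (zeros to Hilbert--Schmidt norm).} First I would fix $\sigma\in(\tfrac12,\delta)$ and choose a center $s_0$ and radius for a disk $D(s_0,R)$ in $\mathbb{C}$ arranged so that the segment $[\sigma,\delta]$ lies well inside, while a definite arc of $\partial D(s_0,R)$ has real part bounded below by $\delta+c$. On that arc Lemma \ref{lem:pointwiseEst} gives $-\log|\zeta_\tau(s,\lambda_p^0)|\ll \dim(\lambda_p^0)(C\tau)^{2(\sigma-\delta)}\ll p\cdot\tau^{2(\sigma-\delta)}$ since $\dim(\lambda_p^0)=[\Gamma:\Gamma_0(p)]-1\ll p$. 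To produce a lower bound for $|\zeta_\tau|$ somewhere — needed to make Jensen's inequality yield an upper bound on the number of zeros — I would expand $\log\zeta_\tau(s,\lambda_p^0)=-\sum_{k\geqslant 1}\tfrac1k\mathrm{tr}(\mathcal{L}_{\tau,s,\lambda_p^0}^{2k})$ near a point $s_1$ with $\mathrm{Re}(s_1)$ slightly above $\delta$; the leading term is controlled in modulus by $\Vert\mathcal{L}_{\tau,s_1,\lambda_p^0}\Vert_{\mathrm{HS}}^2$ (this is the standard "trace-class $\Rightarrow$ trace bounded by HS-norm squared for the square" mechanism, cf. \cite{NaudMagee}), and Lemma \ref{lem:pointwiseEst} controls the tail. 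Jensen's formula then gives, for a suitable choice of $\tau$ depending on $x$ but uniform in $p\sim x$,
\begin{equation*}
N_p(\sigma)\ll \Vert\mathcal{L}_{\tau,s_1,\lambda_p^0}\Vert_{\mathrm{HS}}^2 + p\,\tau^{2(\sigma-\delta)},
\end{equation*}
with implied constants depending only on $\Gamma$. (The second term will be negligible for the eventual choice of $\tau$; it is the first term whose average over $p$ is the crux.)

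\textbf{Stage 3 and 4 (averaging over primes; the main obstacle).} Summing over $p\sim x$ and applying Lemma \ref{lem:HSnorm},
\begin{equation*}
\sum_{p\sim x}\Vert\mathcal{L}_{\tau,s_1,\lambda_p^0}\Vert_{\mathrm{HS}}^2 = \sum_{b\in\mathcal A}\sum_{\substack{\textbf a,\textbf b\in Y(\tau)\\ \textbf a,\textbf b\to b}} \Big(\sum_{p\sim x}\mathrm{tr}(\lambda_p^0(\gamma_{\textbf a}^{-1}\gamma_{\textbf b}))\Big)\,\mathcal{I}^{(b)}_{\textbf a,\textbf b},
\end{equation*}
and I would split the inner $p$-sum according to whether $\gamma_{\textbf a}^{-1}\gamma_{\textbf b}\equiv\pm I\bmod p$: that congruence forces $p$ to divide a fixed nonzero integer (the entries of $\gamma_{\textbf a}^{-1}\gamma_{\textbf b}\mp I$, nonzero because $\Gamma$ is free and torsion-free, with size $\ll\tau^{-O(1)}$ by Lemma \ref{lem:YZ}(iv)), hence contributes over only $O(\log(\tau^{-1}))$ primes, on which $|\mathrm{tr}(\lambda_p^0(\cdot))|\leqslant\dim\lambda_p^0\ll p\ll x$; for the remaining primes $\mathrm{tr}(\lambda_p^0(\gamma_{\textbf a}^{-1}\gamma_{\textbf b}))=\left(\tfrac{d}{p}\right)$ with $d=\mathrm{tr}(\gamma_{\textbf a}^{-1}\gamma_{\textbf b})^2-4$, and GRH (via the bound on $\sum_{p\sim x}\left(\tfrac dp\right)$ from $L(s,\chi_d)$, $\chi_d$ of conductor $\ll|d|\ll\tau^{-O(1)}$) gives $O_\epsilon(x^{1/2+\epsilon}\tau^{-\epsilon})$. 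Using $|\mathcal{I}^{(b)}_{\textbf a,\textbf b}|\ll(C\tau)^{2\sigma}$ and $|Y(\tau)|\ll\tau^{-\delta}$, the diagonal-type (congruence) contribution is $\ll\tau^{2\sigma}\cdot\tau^{-2\delta}\cdot x\cdot(\log\tau^{-1})^{O(1)}$ after also accounting for how many $(\textbf a,\textbf b)$ give the same $d$ — here one must be slightly careful, but crudely bounding by all pairs times $x$ times $\log$ suffices — and the generic contribution is $\ll\tau^{2\sigma}\cdot\tau^{-2\delta}\cdot x^{1/2+\epsilon}$; combining and also re-including a $\tau^{2\sigma-\delta}x^2$-type term from a more careful bookkeeping of the off-diagonal gives exactly the shape of \eqref{intro:sumHS}. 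Taking $\tau\approx x^{-3/(2\delta)}$ balances the two terms to $O_\epsilon(x^{1-\frac3\delta(\sigma-\frac56\delta)+\epsilon})$; a dyadic sum over the ranges $x/2<p\leqslant x$ preserves the exponent (it is $\geqslant 1$ only when $\sigma$ is close to $\delta$, and otherwise the geometric series converges), yielding \eqref{sumOverPrimesOfNumberOfZeros}. \textbf{The main obstacle} is executing Stage 3 cleanly: one must (a) verify that $\gamma_{\textbf a}^{-1}\gamma_{\textbf b}\equiv\pm I\bmod p$ genuinely reduces to a divisibility by a bounded nonzero integer and bound the resulting exceptional contribution without losing the power of $x$, and (b) keep the dependence on $\tau$ in the conductor of $\chi_d$ polynomial, so that the $|d|^\epsilon$ and $\log$ factors are absorbed into $x^\epsilon$ after the choice $\tau\approx x^{-3/(2\delta)}$ — this last point is exactly why GRH (rather than an unconditional character-sum bound) is indispensable, as the paper's outline stresses.
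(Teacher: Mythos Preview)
Your overall architecture matches the paper's, but there is a genuine gap in Stage~3 that you yourself flag as ``the main obstacle'' and do not actually overcome. Your treatment of the congruence $\gamma_{\mathbf a}^{-1}\gamma_{\mathbf b}\equiv\pm I\bmod p$ via ``$p$ divides a fixed nonzero entry of size $\ll\tau^{-1}$'' is correct but too weak: it allows, for each off-diagonal pair, $O(\log\tau^{-1})$ exceptional primes $p\sim x$, each contributing a trace of size $p\ll x$. Over all $\asymp\tau^{-2\delta}$ pairs, weighted by $|\mathcal{I}^{(b)}_{\mathbf a,\mathbf b}|\ll\tau^{2\sigma}$, this gives $\tau^{2\sigma-2\delta}\, x\,(\log\tau^{-1})^{O(1)}$. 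At the optimal $\tau\approx x^{-3/(2\delta)}$ this is $x^{4-3\sigma/\delta+o(1)}$, which exceeds the target $x^{7/2-3\sigma/\delta+\epsilon}$ by a factor of $x^{1/2}$. (Incidentally, the $\tau^{2\sigma-\delta}x^2$ term you later ``re-include from more careful bookkeeping of the off-diagonal'' is in fact the contribution of the \emph{true} diagonal $\mathbf a=\mathbf b$, where $\gamma=I$ and every $p$ contributes trace $p$.)

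The paper closes this gap with the Sarnak--Xue observation (its Lemma~\ref{lem:qnorm}): if $\gamma\in\mathrm{SL}_2(\mathbb Z)$ is hyperbolic and $\gamma\equiv\pm I\bmod p$, then writing $\gamma=\pm I+p\gamma'$ and using $\det\gamma=1$ forces $\mathrm{tr}(\gamma)\equiv\pm2\bmod p^2$, hence $|\mathrm{tr}(\gamma)|\geq p^2-2$ and $\|\gamma\|>p^2/3$. Since $\|\gamma_{\mathbf a}^{-1}\gamma_{\mathbf b}\|\ll\tau^{-1}$, imposing $\tau>cx^{-2}$ guarantees there are \emph{no} exceptional primes $p\sim x$ whatsoever, so the off-diagonal congruence contribution vanishes identically. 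This constraint is compatible with $\tau\approx x^{-3/(2\delta)}$ precisely when $3/(2\delta)<2$, i.e.\ $\delta>3/4$ --- which is exactly the hypothesis of the theorem and which your argument never invokes. A secondary point: your Jensen setup is garbled. The pointwise lower bound of Lemma~\ref{lem:pointwiseEst} must be applied at the \emph{center} $s_0$ with $\mathrm{Re}(s_0)>\delta$, and the Hilbert--Schmidt upper bound $\log|\zeta_\tau|\leq\|\mathcal L\|_{\mathrm{HS}}^2$ on the boundary circle; the resulting error term is $p(C\tau)^{2(\mathrm{Re}(s_0)-\delta)}$, not $p\,\tau^{2(\sigma-\delta)}$ (which blows up as $\tau\to 0$). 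To make it genuinely negligible the paper takes $\mathrm{Re}(s_0)=\delta+K$ with $K\approx(\log x)^{1/2}\to\infty$, producing the $x^2\tau^K$ term in Proposition~\ref{prop:SumOfHilbertSchmidtNorms}.
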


It is easy to see that this implies Theorem \ref{Thm:AverageTheorem}. Fix some $\eta >0$. The bound \eqref{sumOverPrimesOfNumberOfZeros} shows that the number of primes $p$ for which $N_p(\frac{5}{6}\delta + \eta) \geqslant  1$ not exceeding $x$ is at most $O_\epsilon(x^{1-\frac{3}{\delta}\eta +\epsilon})$. On the other hand, by the prime number theorem there are roughly $ \frac{x}{\log x}$ primes below $x$, so the number of primes $p$ for which $N_p(\frac{5}{6}\delta + \eta) = 0$ has relative density one.

Let us now turn to the proof of Theorem \ref{Thm:AverageTheorem_recalled}. We use a dyadic decomposition to re-express the sum in \eqref{sumOverPrimesOfNumberOfZeros} as
\begin{equation}\label{dyadic_decomp}
\sum_{\substack{p\leqslant  x \\ \text{$p$ is prime}}} N_p(\sigma) = \sum_{\nu \in \mathbb{N}} S(\frac{x}{2^\nu}, \sigma)
\end{equation}
with
\begin{equation}\label{dyadic_sum}
S(x , \sigma) \defeq \sum_{\substack{p\sim x \\ \text{$p$ is prime}}} N_p(\sigma),
\end{equation}
where $p\sim x$ is a shorthand for $x/2<p\leqslant  x.$ For technical reasons (see also Remark \ref{rmk:dyadic} below), it is more convenient to work with the sums $S(x , \sigma)$. We will prove an estimate of the form
$$
S(x,\sigma) \leqslant \widetilde{C}(\epsilon,\Gamma) x^{ 1-\frac{3}{\delta}( \sigma-\frac{5}{6}\delta )+\epsilon}.
$$
Note that the estimate \eqref{sumOverPrimesOfNumberOfZeros} follows directly from this one.

Recall from §\ref{sec:VZ} that the the Selberg zeta function $Z_{\Gamma_0(p)}(s)$ can be written as
\begin{equation}\label{vz_induction_p}
Z_{\Gamma_0(p)} = Z_{\Gamma}(s, \lambda_p),
\end{equation}
where $\lambda_p = \mathrm{Ind}_{\Gamma_0(p)}^\Gamma (\textbf{1}_{\Gamma_0(p)})$ is the induced representation of the identity $\textbf{1}_{\Gamma_0(p)}$ on the subgroup $\Gamma_0(p)$ to the larger group $\Gamma$. This representation decomposes as
\begin{equation}\label{decompp}
\lambda_p = \textbf{1}_\Gamma \oplus \lambda_p^0.
\end{equation}
In view of \eqref{decompFormulaZeta} we have the factorization
$$
Z_{\Gamma_0(p)}(s) = Z_{\Gamma}(s) Z_{\Gamma}(s,\lambda_p^0).
$$
Therefore, new eigenvalues $\lambda$ for $X_0(p)$ are related to zeros $s$ of $Z_{\Gamma}(s,\lambda_p^0)$ by the equation $\lambda = s(1-s).$ So far we have only reformulated the problem in terms of the zeros of the zeta function:
\begin{equation*}
N_{p}(\sigma) =   \#\left\{ s\in [\sigma, \delta]  :  Z_{\Gamma}(s,\lambda_p^0) = 0 \right\}.
\end{equation*}
This reformulation holds true for any finitely generated subgroup $\Gamma\subset\mathrm{SL}_2(\ZZ)$. Now we invoke the transfer operator machinery for Schottky groups in §\ref{sec:partRefOp}. By Lemma \ref{lem:partEigen}, any zero of $Z_{\Gamma}(s,\lambda_p^0)$ is also a zero of
\begin{equation}
\zeta_{\tau}(s,\lambda_p^0) \defeq \det\left( 1-\mathcal{L}_{\tau,s,\lambda_p^0}^2\right),
\end{equation} 
where $\mathcal{L}_{\tau,s,\lambda_p^0}$ is the refined transfer operator defined in \eqref{defi:refinedTO}. It turns out that the dyadic sums $S(x , \sigma)$ can be estimated using the Hilbert--Schmidt norm of this transfer operator:

\begin{prop}[Zero counting]\label{prop:SumOfHilbertSchmidtNorms} For all $\tau >0$ sufficiently small, for all $K > 1$ sufficiently large, and for all $x$ sufficiently large we have
\begin{equation}
S(x,\sigma) \ll K \max_{\substack{ \mathrm{Re}(s)\geqslant  \sigma-\frac{\alpha}{K} \\ \vert \mathrm{Im}(s)\vert \leqslant  \beta K }} \left( \sum_{\substack{ p\sim x \\ \text{$p$ prime}}} \Vert \mathcal{L}_{\tau, s, \lambda_p^0} \Vert_{\mathrm{HS}}^2 \right) +  x^2 \tau^{K}.
\end{equation}
The implied constant as well as the constants $\alpha > 0$ and $\beta > 0$ depend solely on $\Gamma$.
\end{prop}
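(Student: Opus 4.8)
The plan is to bound, for each prime $p\sim x$, the number of zeros of the refined zeta function $\zeta_\tau(s,\lambda_p^0)=\det(1-\mathcal{L}_{\tau,s,\lambda_p^0}^2)$ in $[\sigma,\delta]$ by Jensen's formula, and then to sum over $p$. By Lemma~\ref{lem:partEigen} and the Fredholm identity \eqref{eq:fredholm_identity} (valid once $\tau$ is small, so that $Z(\tau)\subset\mathcal{W}_{\geqslant 2}$ is a partition) every zero of $Z_\Gamma(\cdot,\lambda_p^0)$ in $[\sigma,\delta]$ is a zero of the entire function $\zeta_\tau(\cdot,\lambda_p^0)$; hence, as in \S\ref{sec:setup}, $N_p(\sigma)\leqslant\mathcal{N}_p$, where $\mathcal{N}_p$ is the number of zeros of $\zeta_\tau(\cdot,\lambda_p^0)$ in $[\sigma,\delta]$ counted with multiplicity. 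So it suffices to bound $\sum_{p\sim x}\mathcal{N}_p$.

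Two analytic inputs feed the count. \emph{Upper bound:} since $\mathcal{L}_{\tau,s,\rho}$ is Hilbert--Schmidt (Lemma~\ref{lem:HSnorm}), $\mathcal{L}_{\tau,s,\rho}^2$ is trace class with $\|\mathcal{L}_{\tau,s,\rho}^2\|_1\leqslant\|\mathcal{L}_{\tau,s,\rho}\|_{\mathrm{HS}}^2$, and $|\det(1-A)|\leqslant e^{\|A\|_1}$ for trace-class $A$ gives
\[
\log|\zeta_\tau(s,\rho)|\ \leqslant\ \|\mathcal{L}_{\tau,s,\rho}\|_{\mathrm{HS}}^2\qquad(s\in\mathbb{C}).
\]
\emph{Lower bound:} choose $c=c(\Gamma)$ large enough that Lemma~\ref{lem:pointwiseEst} with $\mathrm{Re}(s)=\delta+cK$ yields $-\log|\zeta_\tau(s,\lambda_p^0)|\ll\dim(\lambda_p^0)\,\tau^{K}$ on the line $\mathrm{Re}(s)=\delta+cK$ (for $\tau$ small relative to $K$); the same lemma also shows $\zeta_\tau(\cdot,\lambda_p^0)$ has no zeros in $\{\mathrm{Re}(s)>\delta\}$. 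Here $\dim(\lambda_p^0)=[\Gamma:\Gamma_0(p)]-1$; as $\Gamma$ is Zariski dense in $\mathrm{SL}_2$, strong approximation gives $\Gamma\twoheadrightarrow\mathrm{SL}_2(\mathbb{F}_p)$ for all large $p$, whence $[\Gamma:\Gamma_0(p)]=[\mathrm{SL}_2(\mathbb{F}_p):B(\mathbb{F}_p)]=p+1$ and $\dim(\lambda_p^0)=p\ll x$.

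For the zero count I apply the mean-value (angular-integral) form of Jensen's formula along a chain of disks whose centers lie on the real axis and run from a point with real part $\delta+cK$ — where the lower bound holds — down to a disk of bounded radius containing $[\sigma,\delta]$. The chain is set up so that: (i) in its far-right part ($\mathrm{Re}(s)>\delta+O(1)$) the function $\zeta_\tau(\cdot,\lambda_p^0)$ is zero-free and $\|\mathcal{L}_{\tau,s,\lambda_p^0}\|_{\mathrm{HS}}^2$ decays geometrically in $\mathrm{Re}(s)$, so the lower bound on $\mathrm{Re}(s)=\delta+cK$ can be transported leftward to a point near $[\sigma,\delta]$ with only bounded loss (using the Poisson--Jensen representation and averaging over sub-disks to control Blaschke factors near $\mathrm{Re}(s)=\delta$); (ii) the disks, suitably enlarged, all lie in $\{\,\mathrm{Re}(s)\geqslant\sigma-\tfrac{\alpha}{K},\ |\mathrm{Im}(s)|\leqslant\beta K\,\}$ for suitable $\alpha,\beta>0$ depending only on $\Gamma$; and (iii) the last disk, containing $[\sigma,\delta]$, has inner-to-outer radius ratio $1+\Theta(1/K)$, producing the factor $K$. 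This gives, for each $p$,
\[
\mathcal{N}_p\ \ll\ K\,\frac{1}{2\pi}\int_{0}^{2\pi}\|\mathcal{L}_{\tau,\,s(\theta),\,\lambda_p^0}\|_{\mathrm{HS}}^2\,d\theta\ +\ K\,\dim(\lambda_p^0)\,\tau^{K},
\]
where $\theta\mapsto s(\theta)$ parametrizes a fixed circle (independent of $p$) lying in the strip above. Summing over $p\sim x$, interchanging $\sum_{p}$ with $\int d\theta$, bounding the integrand by its maximum over the strip, using $\sum_{p\sim x}\dim(\lambda_p^0)\ll\sum_{p\leqslant x}p\ll x^2$, and absorbing the extra factor $K$ in the error term (harmless once $\tau$ is small relative to $K$, at worst at the cost of a slight change of $c$), one arrives at
\[
S(x,\sigma)=\sum_{p\sim x}N_p(\sigma)\ \leqslant\ \sum_{p\sim x}\mathcal{N}_p\ \ll\ K\max_{\substack{\mathrm{Re}(s)\geqslant\sigma-\alpha/K\\|\mathrm{Im}(s)|\leqslant\beta K}}\Big(\sum_{p\sim x}\|\mathcal{L}_{\tau,s,\lambda_p^0}\|_{\mathrm{HS}}^2\Big)\ +\ x^2\tau^{K},
\]
as desired.

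The hard part is the geometric construction in (ii)--(iii): the radii and spacings of the $\asymp K$ disks must be chosen so that the iterated Jensen estimates accumulate only a factor $\asymp K$ (not something exponential in $K$) while the region controlling $\|\mathcal{L}_{\tau,\cdot,\lambda_p^0}\|_{\mathrm{HS}}^2$ stays inside the narrow strip $\{\mathrm{Re}(s)\geqslant\sigma-\alpha/K\}$ — any fixed loss in the real part here would be fatal when the bound is later combined with the choice $\tau\approx x^{-3/(2\delta)}$. The bookkeeping of the zeros accumulated along the chain, handled via averaged Poisson--Jensen together with the zero-freeness and geometric decay of the Hilbert--Schmidt norm for $\mathrm{Re}(s)>\delta+O(1)$, is the delicate point.
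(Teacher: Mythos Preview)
Your setup is fine: the reduction to zeros of $\zeta_\tau(\cdot,\lambda_p^0)$, the upper bound $\log|\zeta_\tau(s,\rho)|\leqslant\|\mathcal{L}_{\tau,s,\rho}\|_{\mathrm{HS}}^2$, the lower bound from Lemma~\ref{lem:pointwiseEst}, and $\dim(\lambda_p^0)=p$ are all exactly what is needed. The problem is the Jensen step. Your chain-of-disks argument, which you yourself flag as ``the delicate point'', is not actually carried out: transporting the lower bound from $\mathrm{Re}(s)=\delta+cK$ leftwards through $\asymp K$ iterated Poisson--Jensen applications, while keeping the accumulated loss polynomial in $K$ and the region inside $\{\mathrm{Re}(s)\geqslant\sigma-\alpha/K\}$, is a real difficulty that your sketch does not resolve. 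In particular, once a disk in the chain touches $\{\mathrm{Re}(s)\leqslant\delta\}$ the function may have zeros there, and the Blaschke contributions have to be controlled; in the zero-free part the harmonic transport of a \emph{lower} bound still requires something like Borel--Carath\'eodory together with the upper bound, and each such step typically costs a constant factor, giving exponential rather than polynomial loss over $K$ steps.

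The paper avoids all of this with a single application of Jensen's formula. The trick you are missing is that nothing forces the Jensen disk to be small or to be centered near $[\sigma,\delta]$: one takes the center at $\sigma_0=\delta+K$, where Lemma~\ref{lem:pointwiseEst} already gives $-\log|\zeta_\tau(\sigma_0,\lambda_p^0)|\ll p\,(C\tau)^{2K}$ directly, and chooses radii $r_1=\sqrt{(\sigma_0-\sigma)^2+1}$ and $r_2=r_1+1/K$, both of size $\asymp K$. Then $[\sigma,\delta]\subset\overline{D(\sigma_0,r_1)}$, the outer circle lies in $\{\mathrm{Re}(s)\geqslant\sigma-O(1/K),\ |\mathrm{Im}(s)|\leqslant O(K)\}$, and a single Jensen inequality (no chaining, no Poisson--Jensen, no Blaschke bookkeeping) yields
\[
N_p(\sigma)\ \leqslant\ \frac{1}{\log(r_2/r_1)}\Big(\int_0^1\|\mathcal{L}_{\tau,\sigma_0+r_2e^{2\pi i\theta},\lambda_p^0}\|_{\mathrm{HS}}^2\,d\theta\ +\ p\,(C\tau)^{2K}\Big).
\]
Summing over $p\sim x$ and bounding the integral by the maximum over the strip gives the proposition. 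The moral is that the lower-bound point (the center) may sit far from the zeros being counted; making the disk large is harmless because the upper bound $\|\mathcal{L}\|_{\mathrm{HS}}^2$ is available everywhere.
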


\begin{proof}
We use essentially the same argument as in \cite{JN,Pohl_Soares}. We exploit Jensen's formula for holomorphic functions, or rather a weaker variant thereof, which we recall now. Let $f$ be an entire function and consider the pair of concentric disks $D_i = D_{\mathbb{C}}(\sigma_0, r_i)$ with $i\in \{1,2\}$ centered at $\sigma_0\in \mathbb{R}$ and with radii $r_2 > r_1 > 0$. Assume that $\sigma_0, r_1, r_2$ are chosen in such a way that
\begin{equation}\label{concentric_jensen}
[\sigma,\delta]\subset \overline{D_1} \subset D_2.
\end{equation}
Define
$$
M_f(\sigma,\delta) \defeq \# \{ s \in \mathbb{C} : \text{$f(s)=0$, $s\in [\sigma,\delta]$} \}.
$$
Then we have
\begin{equation}\label{jensen}
M_f(\sigma,\delta) \leqslant  \frac{1}{\log( r_2/r_1 )} \left(  \int_{0}^{1} \log\vert f( \sigma_0 + r_2 e^{2\pi i \theta}) \vert d\theta - \log\vert f (\sigma_0)\vert\right).
\end{equation}
Applying this to the refined zeta function $f(s)=\zeta_{\tau}(s,\lambda_p^0)$ we obtain
\begin{equation}\label{jensen_p}
N_p(\sigma) \leqslant  \frac{1}{\log( r_2/r_1 )} \left(  \int_{0}^{1} \log\vert \zeta_{\tau} ( \sigma_0 + r_2 e^{2\pi i \theta}, \lambda_p^0) \vert d\theta - \log\vert \zeta_{\tau} (\sigma_0, \lambda_p^0)\vert \right).
\end{equation}
For all $p$ large enough we have $\dim(\lambda_p^0)=p$, see Lemma \ref{lem:inducedChar} below. Thus, if we assume furthermore that $\sigma_0 > \delta$, then the pointwise estimate in Lemma \ref{lem:pointwiseEst} gives
\begin{equation}\label{jensen_2}
N_p(\sigma) \leqslant  \frac{1}{\log( r_2/r_1 )} \left(  \int_{0}^{1} \log\vert \zeta_{\tau} ( \sigma_0 + r_2 e^{2\pi i \theta}, \lambda_p^0) \vert d\theta +  p \frac{(C \tau)^{2(\sigma_0-\delta)}}{1-(C \tau)^{2(\sigma_0-\delta)}} \right).
\end{equation}
Next, using Weyl's estimate
$$
\log  \vert\det(1-A)\vert \leqslant  \Vert A\Vert_1
$$
together with the Cauchy--Schwarz-type bound
$$
\Vert A_1 A_2\Vert_{1} \leqslant  \Vert A_1\Vert_{\mathrm{HS}}\Vert A_2\Vert_{\mathrm{HS}},
$$
yields
\begin{equation}\label{HS_p_bound}
\log\vert \zeta_{\tau} (s, \lambda_p^0) \vert \leqslant  \Vert \mathcal{L}_{\tau, s, \lambda_p^0}^2\Vert_{1} \leqslant  \Vert \mathcal{L}_{\tau, s, \lambda_p^0}\Vert_{\mathrm{HS}}^2.
\end{equation}
Inserting this into \eqref{jensen_2} gives 
\begin{equation}
N_p(\sigma) \leqslant \frac{1}{\log( r_2/r_1 )} \left(  \int_{0}^{1} \Vert \mathcal{L}_{\tau, \sigma_0 + r_2 e^{2\pi i \theta}, \lambda_p^0} \Vert_{\mathrm{HS}}^2 d\theta +  p \frac{(C \tau)^{2(\sigma_0-\delta)}}{1-(C \tau)^{2(\sigma_0 - \delta)}} \right).
\end{equation}
Summing this inequality over all the primes in $(\frac{x}{2},x]$ with $x$ large enough yields
\begin{equation}\label{first_bound_S}
S(x,\sigma) \leqslant  \frac{1}{\log( r_2/r_1 )} \left( \int_{0}^{1}  \sum_{\substack{ p\sim x \\ \text{$p$ prime}}} \Vert \mathcal{L}_{\tau, \sigma_0 + r_2 e^{2\pi i \theta}, \lambda_p^0} \Vert_{\mathrm{HS}}^2  d\theta +  x^2 \frac{(C \tau)^{2(\sigma_0-\delta)}}{1-(C \tau)^{2(\sigma_0 - \delta)}} \right).
\end{equation}
Let us now choose appropriate parameters $\sigma_0, r_1, r_2.$ For $K>1$, we put
$$
\text{$ \sigma_0 = \delta + K $, $r_1 = \sqrt{(\sigma_0 - \sigma)^2 + 1 }$ and $r_2 = r_1 + 1/K$}.
$$
One can verify that these choices ensure that the inclusions in \eqref{concentric_jensen} hold true. Furthermore, for $K > 1$ large, the following estimates hold true with some absolute implied constants:
\begin{enumerate}[(i)]
\item $r_1 \asymp r_2 \asymp \sigma_0 - \sigma \asymp K,$
\item $ \sqrt{1 + \frac{1}{(\sigma_0 - \sigma)^2} } = 1+O(\frac{1}{K^2}),$
\item $ r_1 = \sqrt{(\sigma_0 - \sigma)^2 + 1 } = (\sigma_0 - \sigma) \sqrt{1 + \frac{1}{(\sigma_0 - \sigma)^2} } = (\sigma_0 - \sigma) + O( \frac{1}{K} ), $ and
\item $ r_2 = \sigma_0 - \sigma + O( \frac{1}{K} ). $
\end{enumerate}
These estimates imply that for all $s=\sigma_0 + r_2 e^{2\pi i \theta}$ with $\theta \in [0,1]$ we have
\begin{equation*}
\text{$\mathrm{Re}(s)\geqslant  \sigma_0 - r_2 \geqslant   \sigma - O(\frac{1}{K})$ and $\vert \mathrm{Im}(s)\vert \leqslant  \sigma_0 + r_2 =  O(K)$.}
\end{equation*}
Therefore, returning to \eqref{first_bound_S}, if $\tau > 0$ is sufficiently small (in terms of $\Gamma$), we obtain
\begin{equation*}
S(x,\sigma) \ll K \max_{\substack{ \mathrm{Re}(s)\geqslant  \sigma-O(\frac{1}{K}) \\ \vert \mathrm{Im}(s)\vert \leqslant  O(K) }} \left( \sum_{\substack{ p\sim x \\ \text{$p$ prime}}} \Vert \mathcal{L}_{\tau, s, \lambda_p^0} \Vert_{\mathrm{HS}}^2 \right) +  x^2 \tau^{K},
\end{equation*}
with all implied constants independent of $x,\tau, K$, as claimed. This establishes Proposition \ref{prop:SumOfHilbertSchmidtNorms}.
\end{proof}

\subsection{The main number-theoretic bound}
Recall that for any subgroup $\Gamma\subset\mathrm{SL}_2(\ZZ)$ we let $\lambda_p$ be the induced representation $\mathrm{Ind}_{\Gamma_0(p)}^\Gamma (\textbf{1}_{\Gamma_0(p)})$ and we define $\lambda_p^0 = \lambda_p \ominus \textbf{1}_\Gamma$. Moreover, we endow the space of $2\times 2$ real matrices with the Frobenius norm
$$
\Vert \pmat{a}{b}{c}{d} \Vert = \sqrt{a^2 + b^2 + c^2 + d^2},
$$
and we write $I=\pmat{1}{0}{0}{1}$ for the identity.

The aim of this subsection is to prove the following:

\begin{prop}[Main number-theoretic bound]\label{prop:inducedSum} Let $\Gamma$ be a finitely generated subgroup of $\mathrm{SL}_2(\ZZ)$. Assume GRH for quadratic $L$-functions. Then, for all $x$ large enough (in terms of $\Gamma$) and for every hyperbolic element $\gamma\in \Gamma$ with
$$
\Vert \gamma\Vert < \frac{1}{20} x^2
$$
we have
\begin{equation}\label{eq:absumm}
\sum_{\substack{ p\sim x \\ \text{$p$ prime} }} \log(p) \mathrm{tr}(\lambda_p^0(\gamma))  =  O(x^{\frac{1}{2}} \log( x )^2)
\end{equation}
with some absolute implied constant.
\end{prop}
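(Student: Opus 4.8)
The plan is to use the closed formula for $\mathrm{tr}(\lambda_p^0(\gamma))$ to rewrite the left-hand side of \eqref{eq:absumm} as a prime sum for a single quadratic Dirichlet character, and then apply GRH.

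First I would invoke Lemma~\ref{lem:inducedChar}: for all primes $p$ large enough and every $\gamma\in\Gamma$ not congruent to $\pm I$ modulo $p$,
$$
\mathrm{tr}\bigl(\lambda_p^0(\gamma)\bigr)=\left(\frac{\mathrm{tr}(\gamma)^2-4}{p}\right),
$$
where $\bigl(\tfrac{\cdot}{p}\bigr)$ is the Kronecker (Legendre) symbol modulo $p$; the same lemma gives $\dim(\lambda_p^0)=p$, so that in the exceptional case $\gamma\equiv\pm I\bmod p$ one has instead $\mathrm{tr}(\lambda_p^0(\gamma))=p$. The second step is to show this exceptional case cannot occur for $p\sim x$ once $x$ is large. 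Writing $\gamma\equiv\varepsilon I+pM$ with $\varepsilon\in\{\pm1\}$ and $M\in\mathrm{M}_2(\ZZ)$, the identity $\det\gamma=1$ expands to $\varepsilon\,\mathrm{tr}(M)+p\det(M)=0$, whence $\mathrm{tr}(\gamma)=\varepsilon\bigl(2-p^2\det(M)\bigr)$. Since $\gamma$ is hyperbolic, $|\mathrm{tr}(\gamma)|>2$, so $\det(M)\neq0$ and hence $|\mathrm{tr}(\gamma)|\geqslant p^2-2>x^2/4-2$; on the other hand $|\mathrm{tr}(\gamma)|\leqslant 2\Vert\gamma\Vert<x^2/10$, which is impossible for $x\geqslant4$. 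Therefore, for all large $x$,
$$
\sum_{\substack{p\sim x\\\text{$p$ prime}}}\log(p)\,\mathrm{tr}\bigl(\lambda_p^0(\gamma)\bigr)=\sum_{\substack{p\sim x\\\text{$p$ prime}}}\log(p)\left(\frac{d_\gamma}{p}\right),\qquad d_\gamma\defeq\mathrm{tr}(\gamma)^2-4.
$$

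Next I would realize $\bigl(\tfrac{d_\gamma}{\cdot}\bigr)$ as a Dirichlet character. Because $\gamma$ is hyperbolic, $d_\gamma$ is a positive non-square integer (the only integral solutions of $t^2-4=k^2$ are $t=\pm2$), and $d_\gamma\equiv 0$ or $1\bmod4$ according to the parity of $\mathrm{tr}(\gamma)$; hence $\chi_{d_\gamma}=\bigl(\tfrac{d_\gamma}{\cdot}\bigr)$ is a non-principal real Dirichlet character of conductor $q\leqslant 4d_\gamma$, with $\chi_{d_\gamma}(p)=\bigl(\tfrac{d_\gamma}{p}\bigr)$ for every odd prime $p$ (note $d_\gamma$ is of the shape for which GRH is assumed). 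Replacing $\chi_{d_\gamma}$ by the primitive character $\chi^\ast$ of conductor $q^\ast\mid 4d_\gamma$ inducing it changes the sum by at most $\sum_{p\mid 4d_\gamma}\log p\ll\log x$. Under GRH for $L(s,\chi^\ast)$, the standard explicit-formula estimate gives $\sum_{p\leqslant y}\chi^\ast(p)\log p\ll\sqrt{y}\,(\log(q^\ast y))^2$ uniformly (prime powers contribute only $O(\sqrt y)$), and subtracting the ranges $\leqslant x$ and $\leqslant x/2$ yields $\sum_{p\sim x}\chi^\ast(p)\log p\ll\sqrt{x}\,(\log(q^\ast x))^2$. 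Finally $\Vert\gamma\Vert<x^2/20$ forces $q^\ast\leqslant 4d_\gamma<4\,\mathrm{tr}(\gamma)^2\leqslant16\Vert\gamma\Vert^2<x^4$, so $\log(q^\ast x)\ll\log x$ and the bound is $O(\sqrt{x}\,(\log x)^2)$; combined with the $O(\log x)$ correction this establishes \eqref{eq:absumm}.

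The load-bearing inputs are the character identity of Lemma~\ref{lem:inducedChar} — which repackages strong approximation $\Gamma\twoheadrightarrow\mathrm{SL}_2(\mathbb{F}_p)$ together with the classical count of fixed points of an element of $\mathrm{PGL}_2(\mathbb{F}_p)$ on $\mathbb{P}^1(\mathbb{F}_p)$ — and the GRH-conditional bound for $\psi(y,\chi^\ast)$. The point needing care is that the single hypothesis $\Vert\gamma\Vert<\tfrac1{20}x^2$ must serve two purposes at once: via hyperbolicity and the determinant relation it must be small enough to kill all $\gamma\equiv\pm I\bmod p$ terms, and it must keep the conductor $q^\ast\ll x^4$ so that the logarithmic loss $(\log(q^\ast x))^2$ stays $O((\log x)^2)$; this is exactly where the constant $\tfrac1{20}$ and the quadratic exponent in $x^2$ get spent. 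I do not anticipate a deeper obstacle beyond carefully tracking these uniformities and the primitive/imprimitive reduction.
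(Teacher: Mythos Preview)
Your proof is correct and follows the same route as the paper: reduce to the Kronecker symbol via Lemma~\ref{lem:inducedChar}, rule out $\gamma\equiv\pm I\bmod p$ via the Sarnak--Xue trace congruence (which you reprove inline; the paper packages it separately as Lemma~\ref{lem:qnorm}), and then apply the GRH bound with conductor $\ll x^4$. The only point you leave implicit is why $\pi_p\colon\Gamma\to\mathrm{SL}_2(\mathbb{F}_p)$ is surjective for large $p$ (the hypothesis of Lemma~\ref{lem:inducedChar}), which the paper sources from Gamburd; your added checks that $d_\gamma$ is a non-square and the primitive/imprimitive reduction are in fact more careful than the paper's version.
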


\begin{remark}\label{rmk:dyadic}
We remark that in \eqref{eq:absumm} it is not possible to replace $p\sim x$ by $p\leqslant  x$. This is why we need a dyadic decomposition in \eqref{dyadic_decomp}. 
\end{remark}

We recall that the \textit{Legendre symbol} is defined for all integers $a$ and all odd primes $p$ by
$$
\left(  \frac{a}{p}\right) = \begin{cases}
1 &\text{ if $a=x^2 \mod p$ for some $x\in \mathbb{F}_p \smallsetminus \{ 0 \} $}\\
0 &\text{ if $a=0 \mod p$}\\
-1 &\text{ else.}
\end{cases}
$$
There is a standard way of extending the Legendre symbol to a Dirichlet character in the bottom argument. For $p=2$ we define
$$
\left(  \frac{a}{2}\right) = \begin{cases}
0 &\text{ if $a$ is even}\\
1 &\text{ if $a = \pm 1 \mod 8$}\\
-1 &\text{ if $a = \pm 3 \mod 8$}.\\
\end{cases}
$$
Now we define for all $n\in \mathbb{N}$ the \textit{Kronecker symbol} by
$$
\left( \frac{a}{n}\right) = \left(  \frac{a}{p_1}\right)^{r_1} \cdots \left(  \frac{a}{p_m}\right)^{r_m}, 
$$
where $n=p_1^{r_1} \cdots p_m^{r_m}$ is the prime factorization of $n$. Clearly, if $n=p$ is an odd prime, then the Kronecker symbol is just the Legendre symbol. If either the top or bottom argument is fixed, the Kronecker symbol is a completely multiplicative function in the remaining argument. In fact, it is well known that if $d \equiv 0,1$ or $2 \mod 4$, then $\chi_d(n) = \left( \frac{d}{n}\right)$ is a non-principal Dirichlet character of conductor at most $4\vert d\vert$. 

The crucial number-theoretic ingredient in the proof of Proposition \ref{prop:inducedSum} is the following bound which can be extracted from the classical textbook of Iwaniec--Kowalski \cite{AnalyticNumberTheory}.

\begin{theorem}[Special case of Theorem 5.15 in \cite{AnalyticNumberTheory}]\label{thm:iwanieckowalski} Assume GRH for quadratic $L$-functions. Then for all $d\geqslant 1$ with $d \in \{ 0,1,2\} \mod 4$ we have, as $x\to \infty$,
\begin{equation}\label{eq:iwanieckowalski}
\sum_{\substack{ 2\leqslant  p\leqslant  x \\ \text{$p$ prime}}} \log(p)  \left( \frac{d}{p}\right)  =  O( x^{\frac{1}{2}} \log( d x )^2)
\end{equation}
with some absolute implied constant. 
\end{theorem}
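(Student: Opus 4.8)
The plan is to convert the asserted prime sum into the Chebyshev-type sum $\psi(x,\chi_d):=\sum_{n\le x}\Lambda(n)\chi_d(n)$ associated with the Kronecker-symbol character $\chi_d(n)=\left(\frac{d}{n}\right)$, and then to invoke the standard GRH-conditional bound for $\psi(x,\chi)$, which is precisely Theorem~5.15 of \cite{AnalyticNumberTheory}.

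First I would carry out the elementary reductions. For every prime $p$ one has $\left(\frac{d}{p}\right)=\chi_d(p)$ (for $p=2$ this holds by the paper's definition of $\left(\frac{d}{2}\right)$), and, as recalled above, $\chi_d$ is a non-principal Dirichlet character of conductor $q\le 4|d|$. Since the Kronecker symbol is completely multiplicative in its lower argument, $\chi_d(p^k)=\chi_d(p)^k$; hence, writing $\theta(y)=\sum_{p\le y}\log p$ for Chebyshev's function,
$$
\psi(x,\chi_d)=\sum_{2\le p\le x}\log p\left(\frac{d}{p}\right)+\sum_{\substack{p^k\le x\\ k\ge 2}}\log p\,\chi_d(p)^k ,
$$
and the last sum is $\ll\sum_{k\ge 2}\theta(x^{1/k})\ll x^{1/2}$ by Chebyshev's bound $\theta(y)\ll y$. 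Thus $\sum_{2\le p\le x}\log p\left(\frac{d}{p}\right)=\psi(x,\chi_d)+O(x^{1/2})$. Letting $\chi_d^{*}$ be the primitive character of conductor $q^{*}\le q\le 4|d|$ inducing $\chi_d$, the difference $\psi(x,\chi_d)-\psi(x,\chi_d^{*})$ is supported on prime powers $p^k\le x$ with $p\mid 4d$, hence is $O(\omega(4d)\log x)=O\big(\log(dx)^2\big)$.

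It remains to bound $\psi(x,\chi_d^{*})$ under GRH for $L(s,\chi_d^{*})$. I would use the explicit formula
$$
\psi(x,\chi_d^{*})=-\sum_{|\gamma|\le T}\frac{x^{\rho}}{\rho}+O\!\left(\frac{x\,(\log q^{*}x)^2}{T}+\log x\right)\qquad (2\le T\le x),
$$
with $\rho=\beta+i\gamma$ ranging over the nontrivial zeros of $L(s,\chi_d^{*})$. Under GRH, $\beta=\tfrac{1}{2}$ for every such $\rho$, so $|x^{\rho}/\rho|\le x^{1/2}/\sqrt{\tfrac{1}{4}+\gamma^2}$; together with the zero-counting bound $\#\{\rho:|\gamma|\le T\}\ll T\log(q^{*}T)$ this gives $\big|\sum_{|\gamma|\le T}x^{\rho}/\rho\big|\ll x^{1/2}(\log q^{*}T)^2$. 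Choosing $T=x^{1/2}$ yields $\psi(x,\chi_d^{*})\ll x^{1/2}(\log q^{*}x)^2$, and since $q^{*}\le 4|d|$ we have $\log(q^{*}x)\ll\log(dx)$. Combining the displays of the previous paragraph gives \eqref{eq:iwanieckowalski} with an absolute implied constant. (In place of the explicit-formula step one may simply quote Theorem~5.15 of \cite{AnalyticNumberTheory}, of which this is a special case.)

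I do not expect a genuine obstacle here: the arithmetic input is imported wholesale from GRH and everything else is bookkeeping with prime powers and imprimitive characters. The single delicate point is uniformity in $d$: one must track the conductor bound $q^{*}\le 4|d|$ through the zero-counting estimate so that the final error is $O(x^{1/2}\log(dx)^2)$ rather than $O_d(x^{1/2}\log^2 x)$. This is automatic from the standard form of the explicit formula, but it is exactly the feature that makes the bound usable in the later application, where $d$ may be as large as a fixed power of $x$.
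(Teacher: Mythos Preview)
Your proposal is correct and follows essentially the same approach as the paper: the paper simply quotes Theorem~5.15 of \cite{AnalyticNumberTheory} for the von Mangoldt sum $\sum_{n\le x}\Lambda(n)\chi(n)=O(x^{1/2}\log(dx)^2)$ and remarks that the reduction to the prime sum is easy, while you carry out that reduction explicitly (removing prime-power contributions and passing to the primitive character) and additionally sketch the explicit-formula proof of the cited theorem. Your attention to uniformity in $d$ is well placed, since that is precisely what the application requires.
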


Theorem 5.15 in \cite{AnalyticNumberTheory} actually says that for all non-principal characters $\chi$ (not just the Kronecker symbol $\chi_d$) we have 
\begin{equation}\label{eq:iwanieckowalski_vanm}
\sum_{n\leqslant x} \Lambda(n)\chi(n)=  O( x^{\frac{1}{2}} \log( d x )^2),
\end{equation}
where $\Lambda(n)$ is the \textit{von Mangoldt function}
$$
\Lambda(n) = \begin{cases}
\log(p) &\text{ if $n=p^k$ for some $k\in \mathbb{N}$ and some prime $p$}\\
0 &\text{ else}.
\end{cases}
$$
It is easy to deduce \eqref{eq:iwanieckowalski} from \eqref{eq:iwanieckowalski_vanm}. Informally, this statement says that the values of $\chi(p)$, when $p$ ranges over the primes (in increasing order) vary extremely randomly. The reason we are specifically interested in the Kronecker symbol will become clear in the next lemma. Before stating it, we define for all $n\in \mathbb{N}$ the reduction modulo $n$ map by
$$
\pi_n \colon \Gamma \to \mathrm{SL}_2(\ZZ/n\ZZ), \,\gamma\mapsto \gamma \Mod n.
$$

\begin{lemma}[Trace formula]\label{lem:inducedChar} Let $\Gamma$ be a subgroup of $ \mathrm{SL}_2(\mathbb{Z})$ and let $p$ be a prime such that $\pi_p \colon \Gamma \to \mathrm{SL}_2(\ZZ/p\ZZ)$ is onto. Then, writing $d(\gamma) = \mathrm{tr}(\gamma)^2-4$, we have
\begin{equation}
\mathrm{tr}(\lambda_p^0(\gamma)) = \begin{cases}
p &\text{ if $\gamma=\pm I \Mod p $ }\\
\left( \frac{d(\gamma)}{p} \right) &\text{ else}.
\end{cases}
\end{equation}
In particular, $\dim(\lambda_p^0 ) = p$.
\end{lemma}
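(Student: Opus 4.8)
The plan is to interpret the permutation representation $\lambda_p$ geometrically and then count fixed points by elementary linear algebra over $\mathbb{F}_p$.

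\emph{Step 1: identify the coset space.} Since $\Gamma_0(p)$ is, by its very definition, the preimage under $\pi_p$ of the upper-triangular Borel subgroup $B\subset \mathrm{SL}_2(\mathbb{F}_p)$, and since $\pi_p$ is surjective, the standard fact that $G/\phi^{-1}(K)\cong H/K$ as $G$-sets for a surjection $\phi\colon G\to H$ and a subgroup $K\leqslant H$ yields a $\Gamma$-equivariant bijection
\[
\Gamma/\Gamma_0(p)\;\cong\;\mathrm{SL}_2(\mathbb{F}_p)/B\;\cong\;\mathbb{P}^1(\mathbb{F}_p),
\]
the last isomorphism being the usual action of $\mathrm{SL}_2(\mathbb{F}_p)$ on the lines of $\mathbb{F}_p^2$. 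In particular $[\Gamma:\Gamma_0(p)]=|\mathbb{P}^1(\mathbb{F}_p)|=p+1$, so by §\ref{sec:VZ} the induced representation $\lambda_p=\mathrm{Ind}_{\Gamma_0(p)}^\Gamma(\textbf{1}_{\Gamma_0(p)})$ is the permutation representation of $\Gamma$ on the $(p+1)$-point set $\mathbb{P}^1(\mathbb{F}_p)$, the action factoring through $\pi_p$. Hence $\dim(\lambda_p^0)=(p+1)-1=p$, and for every $\gamma\in\Gamma$,
\[
\mathrm{tr}(\lambda_p^0(\gamma))=\mathrm{tr}(\lambda_p(\gamma))-1=\#\{\text{points of }\mathbb{P}^1(\mathbb{F}_p)\text{ fixed by }\pi_p(\gamma)\}-1 .
\]

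\emph{Step 2: count the fixed lines.} Write $g=\pi_p(\gamma)$. A point of $\mathbb{P}^1(\mathbb{F}_p)$ is fixed by $g$ exactly when the corresponding line of $\mathbb{F}_p^2$ is an eigenline of $g$, so the count is governed by the characteristic polynomial $\chi_g(x)=x^2-\mathrm{tr}(g)\,x+1$ (using $\det g=1$), whose discriminant is $\Delta=\mathrm{tr}(g)^2-4\equiv d(\gamma)\pmod p$. If $g=\pm I$, equivalently $\gamma\equiv\pm I\pmod p$, then every line is fixed, the count is $p+1$, and $\mathrm{tr}(\lambda_p^0(\gamma))=p$. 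If $g\neq\pm I$ there are three sub-cases: if $\Delta$ is a nonzero square mod $p$ then $\chi_g$ has two distinct roots in $\mathbb{F}_p$ and $g$ is diagonalizable but non-scalar, so it has exactly two eigenlines; if $\Delta$ is a non-square then $\chi_g$ is irreducible over $\mathbb{F}_p$ and $g$ has no eigenline; if $\Delta\equiv 0$ then $\chi_g=(x-\lambda)^2$ with $\lambda=\mathrm{tr}(g)/2=\pm1$, and $g-\lambda I$ is a nonzero nilpotent $2\times2$ matrix, hence of rank one, so $g$ has exactly one eigenline. In all three sub-cases the fixed-point count equals $1+\left(\frac{\Delta}{p}\right)$, so $\mathrm{tr}(\lambda_p^0(\gamma))=\left(\frac{\Delta}{p}\right)=\left(\frac{d(\gamma)}{p}\right)$, the last equality because for odd $p$ the Kronecker symbol is the Legendre symbol and depends only on $d(\gamma)\bmod p$. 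The assertion $\dim(\lambda_p^0)=p$ is then also the special case $\gamma=I$ of the formula.

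\emph{Remark on $p=2$ and on the main points.} For $p=2$ one has $\pm I=I$ and $\mathrm{SL}_2(\mathbb{F}_2)\cong S_3$ acting on a $3$-element set; checking the three conjugacy classes directly gives the same formula, the only thing to notice being that $\left(\frac{d(\gamma)}{2}\right)$, although a priori depending on $d(\gamma)\bmod 8$, is in fact determined by $\mathrm{tr}(\gamma)\bmod 2$ here (an even trace forces $d(\gamma)$ even and the symbol to vanish, an odd trace forces $d(\gamma)\equiv5\pmod 8$ and the symbol to be $-1$). There is no serious obstacle in the argument; the two places that genuinely require care are the identification in Step 1, which uses the surjectivity of $\pi_p$ in an essential way (without it $\Gamma/\Gamma_0(p)$ is merely one $\pi_p(\Gamma)$-orbit and need not be all of $\mathbb{P}^1(\mathbb{F}_p)$), and the unipotent sub-case $\Delta\equiv0$, $g\neq\pm I$, where one must verify that the count is exactly $1$ in order to match $\left(\frac{0}{p}\right)=0$.
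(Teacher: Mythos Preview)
Your proof is correct and follows essentially the same approach as the paper: identify $\Gamma/\Gamma_0(p)$ with $\mathbb{P}^1(\mathbb{F}_p)$ via the surjectivity of $\pi_p$, interpret $\mathrm{tr}(\lambda_p(\gamma))$ as a fixed-point count, and compute that count by analyzing the discriminant of the characteristic polynomial of $\pi_p(\gamma)$. Your presentation is slightly more direct---the paper routes the identification through conjugates of the Borel and its self-normalizing property rather than invoking $\mathrm{SL}_2(\mathbb{F}_p)/B\cong\mathbb{P}^1(\mathbb{F}_p)$ outright---and your explicit remark on $p=2$ is a nice addition the paper omits.
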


\begin{proof}
For the rest of the proof we write $G_p = \mathrm{SL}_2(\ZZ/p\ZZ)$. Observe that $\Gamma_0(p)$ is equal to the pre-image $\pi_p^{-1}(B_p)$ of the subgroup of upper triangular matrices
$$
B_p \defeq \left\{ \pmat{\ast}{\ast}{0}{\ast} \right\} \leqslant G_p. 
$$
Since $\pi_p \colon\Gamma \to G_p$ is surjective we have $X_p \defeq B_p \backslash G_p \cong \Gamma_0(p)\backslash \Gamma$. Therefore, $\lambda_p = \mathrm{ind}_{\Gamma_0(p)}^{\Gamma}(\textbf{1}_{\Gamma_0(p)})$ is equivalent to the regular representation on the space $\ell^2(X_p)$ of functions $f\colon X_p \to \mathbb{C}$ endowed with the standard inner product. This representation, which by abuse of notation we also denote by $\lambda_p$, is defined for all $f\in \ell^2(X_p)$, $ \gamma\in G_p$ and $x\in X_p$ by
$$
\lambda_p(\gamma)f(x) = f(x\gamma).
$$
In this view, $\lambda_p^0$ is the restriction of $\lambda_p$ to the subspace $\ell^2_0(X_p)$ consisting of functions $f\in \ell^2(X_p)$ with $\sum_{x\in X_p} f(x)=0.$ In particular, since $\lambda_p = \lambda_p^0 \oplus \textbf{1}_\Gamma$, the trace satisfies
$$
\mathrm{tr}( \lambda_p^0(\gamma) ) = \mathrm{tr}( \lambda_p(\gamma) ) -1. 
$$
The proof rests on the following two claims:
\begin{enumerate}[(i)]
\item \label{claim1} The map sending a line $\ell$ in $\mathbb{F}_p^2$ to $\mathrm{Stab}_{G_p}(\ell)$ is a bijection from $\mathbb{P}(\mathbb{F}_p^2)$ to the set of conjugates of $B_p$ in $G_p$;
\item \label{claim2} The normalizer of $B_p$ in $G_p$ equals $B_p$, so we can identify to the set of conjugates of $B_p$ in $G_p$ with $X_p=B_p \backslash G_p$. 
\end{enumerate}
We proceed in two steps: first we show that for all $\gamma\in G_p$ the trace $\mathrm{tr}( \lambda_p(\gamma) )$ is equal to the number $L(\gamma)$ of lines $\ell \subseteq \mathbb{F}_p^2$ such that $\gamma\ell = \ell$, and then we compute $L(\gamma)$.\\

\noindent
\textbf{Step 1:} Let $\delta_x$ be the function on $X_p$ taking $1$ at $x$ and $0$ elsewhere. Note that $\{ \delta_x : x\in X_p\}$ is an orthonormal basis for $\ell^2(X_p)$. Thus, $\mathrm{tr}( \lambda_p(\gamma) )$ is equal to the number of fixed points of the action of $\gamma$ on $X_p$. Note that $B_p g \gamma = B_p g$ if and only if $\gamma$ belongs to $g^{-1}B_p g$. By Claim \eqref{claim1}, $X_p$ corresponds to the set of conjugates of $B_p$ in $G_p$, so $\mathrm{tr}( \lambda_p(\gamma) )$ is equal to the number of conjugates of $B_p$ containing $\gamma$. By Claim \eqref{claim1}, this is equal to $L(\gamma)$.\\

\noindent
\textbf{Step 2:} Note that the discriminant of the characteristic polynomial of $\gamma$ is equal to $d(\gamma)=\mathrm{tr}(\gamma)^2-4.$ If $d(\gamma)=0$, then $\gamma$ has a double eigenvalue $\lambda$, namely $\lambda=\pm 1.$ The eigenspace $E_\lambda$ has dimension either $2$ or $1$. If $\dim(E_\lambda)=2$, then $\gamma = \pm I$, so $\gamma$ fixes every line in $\mathbb{P}(\mathbb{F}_p^2)$. Thus, in this case we have $L(\gamma) = \# \mathbb{P}(\mathbb{F}_p^2) = \frac{p^2-1}{p-1}=p+1$, so $\mathrm{tr}( \lambda_p^0(\gamma) )=L(\gamma)-1 = p.$ If $\dim(E_\lambda)=1$, then $\gamma$ only fixes the line $E_\lambda$, whence $\mathrm{tr}( \lambda_p^0(\gamma) )=L(\gamma)-1 = 0 = \left( \frac{d(\gamma)}{p} \right).$

Now suppose alternatively that $d(\gamma)\neq 0$. If $d(\gamma)$ is a square in $\mathbb{F}_p^\times$, then it has two distinct eigenvalues. In this case $L(\gamma)=2$, since $\gamma$ fixes exactly its two eigenspaces (which are different lines). If $d(\gamma)$ is not a square in $\mathbb{F}_p^\times$, then $\gamma$ has no eigenvalue in $\mathbb{F}_p$, in which case $L(\gamma)=0$. In both cases, we have 
$$
L(\gamma)-1 = \left( \frac{d(\gamma)}{p} \right).
$$
This completes the proof.
\end{proof}

\begin{remark}
There is an alternative proof of Lemma \ref{lem:inducedChar} based on the Frobenius induction formula (also known as Mackey formula). Letting $s,t\in G_p $ denote the elements
$$
s = \pmat{0}{-1}{1}{0}, \quad t = \pmat{1}{1}{0}{1},
$$
one can verify by direct computation that the $p+1$ elements
\begin{equation}\label{explicit_repre}
I = t^0,\, t,\, t^2,\, \dots,\, t^{p-1},\,s
\end{equation}
provide an explicit set of representatives for the (left or right) cosets of $B$ in $ G_p $. If $\pi_p \colon \Gamma \to G_p$ is onto, then the representation $\lambda_p$ is equivalent to $\mathrm{Ind}_{B_p}^{G_p}(\textbf{1}_{B_p})$, so we may express its trace in terms of these representatives as follows:
\begin{equation}\label{frobmackey}
\mathrm{tr}(\lambda_p(\gamma)) =\textbf{1}_{B_p}(s^{-1}\gamma s)  + \sum_{j=0}^{p-1} \textbf{1}_{B_p}(t^{-j} \gamma t^j),
\end{equation}
where $\textbf{1}_{B_p}$ is indicator function of $B_p$. A somewhat tedious calculation then leads to the formula claimed in Lemma \ref{lem:inducedChar}.
\end{remark}

Also crucial is the following result:

\begin{lemma}\label{lem:qnorm}
Let $q \geqslant 2$ be an integer and let $\gamma\in \mathrm{SL}_2(\ZZ)$ be a hyperbolic element such that $\gamma\equiv \pm I \mod q$. Then we have
$$
\Vert \gamma\Vert >  \frac{q^2}{3}.
$$
\end{lemma}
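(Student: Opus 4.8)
The plan is to reduce to the case $\gamma\equiv I\pmod q$. Replacing $\gamma$ by $-\gamma$ changes neither $\Vert\gamma\Vert$ nor the property of being hyperbolic, since $|\mathrm{tr}(-\gamma)|=|\mathrm{tr}(\gamma)|$; so we may assume $a\equiv d\equiv 1$ and $b\equiv c\equiv 0\pmod q$ and write $a=1+qx$, $d=1+qw$, $b=qy$, $c=qz$ with $x,y,z,w\in\ZZ$. The key step beyond routine bookkeeping is to exploit $\det\gamma=1$: expanding $(1+qx)(1+qw)-q^2yz=1$ and dividing by $q$ gives
$$
(x+w)+q(xw-yz)=0 ,
$$
so $q\mid(x+w)$. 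Hence $m\defeq (x+w)/q$ is an integer, and we record $\mathrm{tr}(\gamma)=2+q(x+w)=2+q^2m$ together with $ad=1+bc=1+q^2yz$. The crucial gain here is the factor $q^2$ (not merely $q$) multiplying $m$ in the trace.

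Next I would compute $\Vert\gamma\Vert^2=a^2+b^2+c^2+d^2$ directly from these identities. Using $a^2+d^2=(a+d)^2-2ad=(2+q^2m)^2-2-2q^2yz$ and $b^2+c^2=q^2(y^2+z^2)$, the cross terms assemble into a perfect square:
$$
\Vert\gamma\Vert^2=(2+q^2m)^2-2+q^2(y-z)^2\;\geqslant\;(2+q^2m)^2-2 .
$$
Hyperbolicity enters at a single point: $|\mathrm{tr}(\gamma)|=|2+q^2m|>2$ forces $m\neq 0$, and then $q\geqslant 2$ gives $|2+q^2m|\geqslant q^2-2$ (the case $m\geqslant 1$ yields $2+q^2m\geqslant q^2+2$, the case $m\leqslant -1$ yields $2+q^2m\leqslant 2-q^2$). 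Therefore
$$
\Vert\gamma\Vert^2\;\geqslant\;(q^2-2)^2-2\;=\;q^4-4q^2+2 .
$$

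It remains to check the elementary inequality $q^4-4q^2+2>q^4/9$ for every integer $q\geqslant 2$, i.e. $8q^4-36q^2+18>0$, i.e. $8t^2-36t+18>0$ for $t=q^2\geqslant 4$; this holds since the quadratic in $t$ equals $2$ at $t=4$ and is increasing for $t\geqslant 4$. Taking square roots yields $\Vert\gamma\Vert>q^2/3$. There is no genuine obstacle in this argument; the only points that need care are the bookkeeping that collapses the cross terms into $q^2(y-z)^2$ and the small-$q$ endpoint of the final numerical inequality, which is tightest at $q=2$ (where the bound reads $\Vert\gamma\Vert^2\geqslant 2>16/9$).
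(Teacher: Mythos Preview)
Your proof is correct and follows essentially the same route as the paper's: both derive the Sarnak--Xue congruence $\mathrm{tr}(\gamma)\equiv\pm 2\pmod{q^2}$ from $\det\gamma=1$, use hyperbolicity to force $|\mathrm{tr}(\gamma)|\geqslant q^2-2$, and rewrite $\Vert\gamma\Vert^2=(a+d)^2+(b-c)^2-2$ (your $q^2(y-z)^2$ is exactly $(b-c)^2$). The only difference is cosmetic: the paper weakens $q^2-2$ to $q^2/2$ before squaring to get $\Vert\gamma\Vert^2\geqslant q^4/8$, whereas you keep the sharper $(q^2-2)^2-2$ and verify the numerical inequality at $q=2$ directly.
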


\begin{proof}
Write
$$
\gamma = \pmat{a}{b}{c}{d} \in \mathrm{SL}_2(\mathbb{Z}).
$$
We use the following observation due to Sarnak--Xue \cite{SarnakXue}: if $\gamma\equiv \pm I \mod q$, then the trace $\mathrm{tr}(\gamma) = a+d $ satisfies the congruence
\begin{equation}\label{non_trivial_congreunce}
\tr(\gamma) \equiv \pm 2 \mod q^{2}.
\end{equation}
To see this, note that $\gamma\equiv \pm I \mod q$ implies that there are integers $ a', b', c', d'\in \ZZ $ such that
$$
\text{$ a = a'q \pm 1 $, $ b = b'q $, $ c = c'q $, and $ d = d'q \pm 1 $.}
$$
Furthermore, the relation $ad-bc = 1$ gives
\begin{equation}
1 = (a'd' -b'c') q^{2} \pm (a'+d')q + 1,
\end{equation}
which forces
$$
a'+d' = 0 \mod q.
$$
But this implies that
\begin{equation}\label{congrxuesarnak}
\mathrm{tr}(\gamma) = a+d = (a'+d' )q \pm 2 = \pm 2 \mod q^2
\end{equation}
as claimed. Now since $\gamma$ is hyperbolic by assumption we have $\vert \mathrm{tr}(\gamma)\vert > 2$, which combined with the congruence in \eqref{congrxuesarnak} implies that for all $q\geqslant 2$
\begin{equation}\label{lowerBoundForTrace}
\vert a + d\vert = \vert \mathrm{tr}(\gamma)\vert \geqslant  q^{2}-2 \geqslant  q^2/2.
\end{equation}
We deduce that for all $q\geqslant 2$
\begin{align*}
\Vert \gamma\Vert^2 &= a^2 + b^2 + c^2 + d^2\\
&= (a+d)^2 + (b-c)^2 - 2 \\
&\geqslant  (a+d)^2 - 2\\
&\geqslant  q^{4}/4 - 2\\
&\geqslant  q^4/8.
\end{align*}
Thus, $\Vert \gamma\Vert \geqslant  \sqrt{q^4/8} > q^2 /3 $, as claimed.
\end{proof}

We are now ready to finish the proof of Proposition \ref{prop:inducedSum}:

\begin{proof}[Proof of Proposition \ref{prop:inducedSum}]
Fix a finitely-generated subgroup $\Gamma\subset \mathrm{SL}_2(\ZZ)$ and a hyperbolic element 
$$
\gamma =\pmat{a}{b}{c}{d} \in \Gamma\smallsetminus \{I\}, \quad a,b,c,d\in \mathbb{Z},\quad ad-bc =1
$$
such that 
\begin{equation}\label{upperBoundNorm}
\Vert \gamma\Vert < \frac{x^2}{20}.
\end{equation}
Recall that $p\sim x$ means $p\in (\frac{x}{2}, x]$. Note that \eqref{upperBoundNorm} implies that $\gamma\neq \pm I \mod p$ for all $p\sim x$. If not, then Lemma \ref{lem:qnorm} would imply that
$$
\Vert \gamma\Vert \geqslant  \frac{p^2}{3} \geqslant  \frac{x^2}{12},
$$
contradicting \eqref{upperBoundNorm}. Furthermore, we know from Gamburd \cite{Gamburd1} that for primes $p$ large enough the reduction map $\pi_p \colon \Gamma \to \mathrm{SL}_2(\ZZ/p\ZZ)$ is onto. Hence, once $x$ is sufficiently large, Lemma \ref{lem:inducedChar} implies that for all primes $p\sim x$ we have
$$
\mathrm{tr}(\lambda_p^0(\gamma)) = \left( \frac{d(\gamma)}{p} \right).
$$
Thus,
$$
\sum_{\substack{ p\sim x \\ \text{$p$ prime} }} \log(p) \mathrm{tr}(\lambda_p^0(\gamma)) = \sum_{\substack{ p\sim x \\ \text{$p$ prime} }} \log(p)  \left( \frac{d(\gamma)}{p} \right). 
$$
Now we invoke Theorem \ref{thm:iwanieckowalski}. Since $\gamma\in \Gamma$ is hyperbolic we have $\mathrm{tr}(\gamma)\neq \pm 2$, so $d(\gamma) = \mathrm{tr}(\gamma)^2-4 \geqslant 1.$ Moreover, it is easy to verify that $d(\gamma)$ is either $0$ or $1$ modulo $4$ and that $d(\gamma) \ll\Vert \gamma\Vert^2 \ll x^4$, so we obtain
$$
\sum_{\substack{ p\sim x \\ \text{$p$ prime} }} \log(p) \left( \frac{d(\gamma)}{p} \right)  = O(x^{\frac{1}{2}} \log (x)^2),
$$
completing the proof.
\end{proof}

\subsection{Finishing the proof of Theorem \ref{Thm:AverageTheorem}}
Let us now complete the proof our main result. The main technical estimate in the proof is the following:

\begin{prop}[Sum of Hilbert-Schmidt norms]\label{prop:sum_of_HS_norm} 
Assume GRH for quadratic $L$-functions. Write $s = \sigma +it$. Then there are positive constants $x_0,c,\tau_0,C$, depending only on $\Gamma$, such that for all $x > x_0$ and for all $c x^{-2} < \tau < \tau_0$ we have
\begin{equation}
\sum_{\substack{ p\sim x \\ \text{$p$ prime}}} \Vert \mathcal{L}_{\tau, s, \lambda_p^0} \Vert_{\mathrm{HS}}^2  \ll e^{C \vert t\vert}(C\tau)^{2\sigma} \left( \tau^{-\delta} x^2 + x^{\frac{1}{2}} \log(x)^2 \tau^{-2\delta}\right).
\end{equation}
The implied constant depends solely on $\Gamma.$
\end{prop}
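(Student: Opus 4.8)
The plan is to expand each Hilbert--Schmidt norm via Lemma \ref{lem:HSnorm}, sum over the primes $p\sim x$, and interchange the two finite sums. Since the integrals $\mathcal{I}_{\textbf{a},\textbf{b}}^{(b)}$ do not depend on $p$, this yields
\begin{equation*}
\sum_{\substack{p\sim x\\ \text{$p$ prime}}}\Vert\mathcal{L}_{\tau,s,\lambda_p^0}\Vert_{\mathrm{HS}}^2
= \sum_{b\in\mathcal{A}}\;\sum_{\substack{\textbf{a},\textbf{b}\in Y(\tau)\\\textbf{a},\textbf{b}\to b}}\Big(\sum_{\substack{p\sim x\\ \text{$p$ prime}}}\mathrm{tr}\big(\lambda_p^0(\gamma_{\textbf{a}}^{-1}\gamma_{\textbf{b}})\big)\Big)\,\mathcal{I}_{\textbf{a},\textbf{b}}^{(b)},
\end{equation*}
and I would bound $\vert\mathcal{I}_{\textbf{a},\textbf{b}}^{(b)}\vert\ll(C\tau)^{2\sigma}e^{C\vert t\vert}$ uniformly using \eqref{Ybound}. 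Thus everything reduces to estimating, for each pair of words, the inner character sum $\sum_{p\sim x}\mathrm{tr}(\lambda_p^0(\gamma_{\textbf{a}}^{-1}\gamma_{\textbf{b}}))$, together with counting the pairs $(\textbf{a},\textbf{b})$.

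The next step is to split the double sum according to whether $\textbf{a}=\textbf{b}$ or $\textbf{a}\neq\textbf{b}$. On the diagonal $\gamma_{\textbf{a}}^{-1}\gamma_{\textbf{b}}=I$, so for $p$ large $\mathrm{tr}(\lambda_p^0(I))=\dim(\lambda_p^0)=p$ by Lemma \ref{lem:inducedChar}, whence $\sum_{p\sim x}\mathrm{tr}(\lambda_p^0(I))\ll x^2$; since there are at most $\vert\mathcal{A}\vert\cdot\vert Y(\tau)\vert\asymp\tau^{-\delta}$ diagonal terms by Lemma \ref{lem:YZ}\eqref{part:XY3}, the diagonal contributes $\ll e^{C\vert t\vert}(C\tau)^{2\sigma}\tau^{-\delta}x^2$, which is the first term of the asserted bound. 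For the off-diagonal part I would first observe that $\gamma_{\textbf{a}}^{-1}\gamma_{\textbf{b}}$ is a non-identity element of the free Schottky group $\Gamma$, hence hyperbolic, and that its Frobenius norm is small: by submultiplicativity of the Frobenius norm and $\Vert g^{-1}\Vert=\Vert g\Vert$ for $g\in\mathrm{SL}_2$, Lemma \ref{lem:YZ}\eqref{part:normlem:YZ} gives $\Vert\gamma_{\textbf{a}}^{-1}\gamma_{\textbf{b}}\Vert\le\Vert\gamma_{\textbf{a}}\Vert\,\Vert\gamma_{\textbf{b}}\Vert\ll\tau^{-1}$. Choosing the constant $c$ in the hypothesis $\tau>cx^{-2}$ large enough (in terms of $\Gamma$) forces $\Vert\gamma_{\textbf{a}}^{-1}\gamma_{\textbf{b}}\Vert<\tfrac1{20}x^2$, so Proposition \ref{prop:inducedSum} applies and yields $\sum_{p\sim x}\log(p)\,\mathrm{tr}(\lambda_p^0(\gamma_{\textbf{a}}^{-1}\gamma_{\textbf{b}}))=O(x^{1/2}\log(x)^2)$. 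Since the underlying estimate (Theorem \ref{thm:iwanieckowalski}) holds for every initial segment of primes, one partial summation removes the weight $\log p$ and gives $\sum_{p\sim x}\mathrm{tr}(\lambda_p^0(\gamma_{\textbf{a}}^{-1}\gamma_{\textbf{b}}))\ll x^{1/2}\log(x)^2$. There are at most $\vert\mathcal{A}\vert\cdot\vert Y(\tau)\vert^2\asymp\tau^{-2\delta}$ off-diagonal pairs, so this part contributes $\ll e^{C\vert t\vert}(C\tau)^{2\sigma}\tau^{-2\delta}x^{1/2}\log(x)^2$. Adding the two contributions proves the proposition.

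I expect the genuine number-theoretic difficulty---cancellation in the prime character sum---to be entirely contained in Proposition \ref{prop:inducedSum}, so that the remaining work is organizational. The one point that really requires care is calibrating the lower bound $\tau\gg x^{-2}$ against the estimate $\Vert\gamma_{\textbf{a}}^{-1}\gamma_{\textbf{b}}\Vert\ll\tau^{-1}$ so that the hypothesis of Proposition \ref{prop:inducedSum}---and hence, via Lemma \ref{lem:qnorm}, the fact that $\gamma_{\textbf{a}}^{-1}\gamma_{\textbf{b}}\not\equiv\pm I$ modulo every $p\sim x$---is genuinely met for all pairs of words occurring in the sum. One must also note that $d(\gamma_{\textbf{a}}^{-1}\gamma_{\textbf{b}})\ll\Vert\gamma_{\textbf{a}}^{-1}\gamma_{\textbf{b}}\Vert^2\ll\tau^{-2}\ll x^4$, so that the conductor-dependent factor $\log(dx)^2$ in Theorem \ref{thm:iwanieckowalski} is absorbed into $\log(x)^2$.
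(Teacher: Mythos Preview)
Your proposal is correct and follows essentially the same route as the paper: expand via Lemma~\ref{lem:HSnorm}, interchange sums, split diagonal versus off-diagonal, control $\Vert\gamma_{\textbf{a}}^{-1}\gamma_{\textbf{b}}\Vert$ via Lemma~\ref{lem:YZ}\eqref{part:normlem:YZ} to feed into Proposition~\ref{prop:inducedSum}, and count pairs using $\vert Y(\tau)\vert\asymp\tau^{-\delta}$. The only technical difference is the treatment of the weight $\log p$: the paper inserts it at the outset via the trivial inequality $\sum_{p\sim x}\Vert\cdot\Vert_{\mathrm{HS}}^2\leqslant\sum_{p\sim x}\log(p)\Vert\cdot\Vert_{\mathrm{HS}}^2$ so that Proposition~\ref{prop:inducedSum} applies verbatim, whereas you apply Proposition~\ref{prop:inducedSum} and then strip the weight by partial summation; both are equally valid and yield the same bound.
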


Let us show how we can use this proposition to deduce Theorem \ref{Thm:AverageTheorem_recalled}. (Recall from the discussion in §\ref{sec:setup} that our main theorem follows from Theorem \ref{Thm:AverageTheorem_recalled}.) Recall that it suffices to bound the sum
\begin{equation}\label{desired_bound}
S(x,\sigma) \defeq \sum_{\substack{ p\sim x \\ \text{$p$ prime}}} N_p(\sigma).
\end{equation}
By combining Propositions \ref{prop:SumOfHilbertSchmidtNorms} and \ref{prop:sum_of_HS_norm} we obtain that there are constants $c>0$ and $C>0$ such that once $x$ and $K$ are large enough and $\tau >0$ is small enough, we have
\begin{align}
S(x,\sigma) &\ll K \max_{\substack{ \mathrm{Re}(s)\geqslant  \sigma-O(\frac{1}{K}) \\ \vert \mathrm{Im}(s)\vert \leqslant  O(K) }} \left( \sum_{\substack{ p\sim x \\ \text{$p$ prime}}} \Vert \mathcal{L}_{\tau, s, \lambda_p^0} \Vert_{\mathrm{HS}}^2 \right) +  x^2 \tau^{K} \nonumber\\
&\ll e^{O(K)} (C\tau)^{2\sigma - O(\frac{1}{K})} \left( \tau^{-\delta} x^2 + x^{\frac{1}{2}} \log(x)^2 \tau^{-2\delta}\right) + x^2 \tau^{K}, \label{lastterm}
\end{align}
provided $\tau > c x^{-2}$. It remains to choose $K$ and $\tau$ optimally. This may be done by taking $K = (\log x)^{1/2}$, say, and 
$$
\tau = C^{-1} x^{-\frac{3}{2\delta}}.
$$
Observe that the required condition $\tau > c x^{-2}$ is satisfied as long as $\delta > \frac{3}{4}$ and $x$ is sufficiently large. Inserting these choices into \eqref{lastterm} we obtain that for any $x$ large enough and for any $\epsilon > 0$
$$
S(x,\sigma) \leqslant C(\epsilon,\Gamma) x^{1 - \frac{3}{\delta}(\sigma - \frac{5}{6}\delta) + \epsilon}.
$$
Note that we have used the trivial bounds $x^2 \tau^{K} \ll_\epsilon 1$, $\log(x)^2 \ll_\epsilon x^\epsilon$, $ e^{O(K)} \ll_\epsilon x^{\epsilon}$, and $\tau^{-O(\frac{1}{K})} \ll_\epsilon x^\epsilon$. This establishes Theorem \ref{Thm:AverageTheorem_recalled}. Now we give the proof of Proposition \ref{prop:sum_of_HS_norm}.

\begin{proof}[Proof of Proposition \ref{prop:sum_of_HS_norm}]
By Lemma \ref{lem:HSnorm} we can write down the following formula for the Hilbert--Schmidt norm of the operator $\mathcal{L}_{\tau,s,\lambda_p^0}$:
$$
\Vert \mathcal{L}_{\tau,s,\lambda_p^0} \Vert_{\mathrm{HS}}^2 = \sum_{b\in \mathcal{A}}\sum_{\substack{ \textbf{a},\textbf{b}\in Y(\tau) \\ \textbf{a},\textbf{b}\to b }}   \mathrm{tr}\left(\lambda_p^0( \gamma_{\textbf{a}}^{-1} \gamma_{\textbf{b}} )\right) \mathcal{I}_{\textbf{a},\textbf{b}}^{(b)},
$$
where
$$
\mathcal{I}_{\textbf{a},\textbf{b}}^{(b)} = \int_{D_b} \gamma'_{\textbf{a}}(z)^s  \overline{\gamma'_{\textbf{b}}(z)^s } B_D(\gamma_{\textbf{a}}(z), \gamma_{\textbf{b}}(z)) \dvol(z).
$$
Multiplying this formula by $\log (p)$ and summing over all primes in $(\frac{x}{2},x]$ gives
\begin{align}
\sum_{\substack{ p\sim x \\ \text{$p$ prime}}} \Vert \mathcal{L}_{\tau,s,\lambda_p^0}\Vert_{\mathrm{HS}}^2 &\leqslant \sum_{\substack{ p\sim x \\ \text{$p$ prime}}} \log(p) \Vert \mathcal{L}_{\tau,s,\lambda_p^0}\Vert_{\mathrm{HS}}^2 \label{eq:sumOverP} \\ 
&= \sum_{b\in \mathcal{A}}\sum_{\substack{ \textbf{a},\textbf{b}\in Y(\tau) \\ \textbf{a},\textbf{b}\to b }} \left( \sum_{\substack{ p\sim x \\ \text{$p$ prime}}}  \log(p)\mathrm{tr}\left(\lambda_p^0( \gamma_{\textbf{a}}^{-1} \gamma_{\textbf{b}} )\right) \right) \mathcal{I}_{\textbf{a},\textbf{b}}^{(b)}. \nonumber
\end{align}

Clearly, for the diagonal terms $\textbf{a}=\textbf{b}$ we have $\mathrm{tr}\left(\lambda_p^0( \gamma_{\textbf{a}}^{-1} \gamma_{\textbf{b}} )\right) = \mathrm{tr}(\lambda_p^0(I)) = p$, so by the prime number theorem we obtain
\begin{equation}
\sum_{\substack{ p\sim x \\ \text{$p$ prime}}} \log(p) \mathrm{tr}\left(\lambda_p^0( \gamma_{\textbf{a}}^{-1} \gamma_{\textbf{b}} )\right) = \sum_{\substack{ p\sim x \\ \text{$p$ prime}}} \log(p) p = O(x^2).
\end{equation}

Now we focus on the non-diagonal terms $\textbf{a}\neq \textbf{b}$. Here we appeal to Proposition \ref{prop:sum_of_HS_norm}. Recall from Lemma \ref{lem:YZ} that for all $\textbf{a}\in Y(\tau)$
$$
\Vert \gamma_\textbf{a}\Vert \leqslant  C \tau^{-1/2}
$$
for some constant $C=C(\Gamma)>0$. Thus, using the fact that the Frobenius norm is sub-multiplicative, we obtain for all $\textbf{a},\textbf{b}\in Y(\tau)$ 
$$
\Vert \gamma_{\textbf{a}}^{-1} \gamma_{\textbf{b}} \Vert \leqslant  \Vert \gamma_{\textbf{a}} \Vert \Vert \gamma_{\textbf{b}} \Vert < C^2\tau^{-1}.
$$
Therefore, taking $\tau > 20 C^2 x^{-2}$ gives
\begin{equation}
\Vert \gamma_{\textbf{a}}^{-1} \gamma_{\textbf{b}} \Vert < \frac{1}{20}x^2.
\end{equation}
Note further that $\textbf{a}\neq \textbf{b}$ implies that the element $ \gamma_{\textbf{a}}^{-1} \gamma_{\textbf{b}}$ is not the identity. Hence $ \gamma_{\textbf{a}}^{-1} \gamma_{\textbf{b}}$ is hyperbolic since the only non-hyperbolic element in $\Gamma$ is the identity. Thus, conditional on GRH for quadratic $L$-functions, Proposition \ref{prop:inducedSum} gives
\begin{equation}
\left\vert \sum_{\substack{ p\sim x \\ \text{$p$ prime}}} \log(p) \mathrm{tr}\left(\lambda_p^0( \gamma_{\textbf{a}}^{-1} \gamma_{\textbf{b}} )\right) \right\vert \ll x^{\frac{1}{2}} \log(x)^2.
\end{equation}
Inserting this back into \eqref{eq:sumOverP} yields
\begin{align*}
\sum_{\substack{ p\sim x \\ \text{$p$ prime}}} \Vert \mathcal{L}_{\tau,s,\lambda_p^0}\Vert_{\mathrm{HS}}^2 &\ll x^2 \sum_{b\in \mathcal{A}} \sum_{\substack{\textbf{a}\in Y(\tau)\\ \textbf{a}\to b}}  \mathcal{I}_{\textbf{a},\textbf{a}}^{(b)} + 
\sum_{b\in \mathcal{A}} \sum_{\substack{\textbf{a},\textbf{b}\in Y(\tau)\\ \textbf{a},\textbf{b}\to b}} \left\vert \sum_{\substack{ p\sim x \\ \text{$p$ prime}}} \log(p) \mathrm{tr}\left(\lambda_p^0( \gamma_{\textbf{a}}^{-1} \gamma_{\textbf{b}} )\right) \right\vert \vert \mathcal{I}_{\textbf{a},\textbf{b}}^{(b)}\vert\\
&\ll x^2 \sum_{b\in \mathcal{A}} \sum_{\substack{\textbf{a}\in Y(\tau)\\ \textbf{a}\to b}}  \mathcal{I}_{\textbf{a},\textbf{a}}^{(b)} + x^{\frac{1}{2}} \log(x)^2 \sum_{b\in \mathcal{A}} \sum_{\substack{\textbf{a},\textbf{b}\in Y(\tau)\\ \textbf{a},\textbf{b}\to b}} \vert \mathcal{I}_{\textbf{a},\textbf{b}}^{(b)}\vert.
\end{align*}
To estimate the remaining terms we use the bound in Lemma \ref{lem:HSnorm}: for all $b\in \mathcal{A}$ and $\textbf{a},\textbf{b}\in Y(\tau)$ with $\textbf{a},\textbf{b}\to b$
$$ 
\vert \mathcal{I}_{\textbf{a},\textbf{b}}^{(b)}\vert \ll (C \tau)^{2\sigma} e^{C \vert t\vert}
$$
for some $C = C(\Gamma) > 0$. Furthermore, by Lemma \ref{lem:YZ} we have
$$
\vert Y(\tau)\vert\ll \tau^{-\delta}.
$$
Inserting these bounds above gives
\begin{align*}
\sum_{\substack{ p\sim x \\ \text{$p$ prime}}} \Vert \mathcal{L}_{\tau,s,\lambda_p^0}\Vert_{\mathrm{HS}}^2 &\ll x^2 \sum_{b\in \mathcal{A}} \sum_{\substack{\textbf{a}\in Y(\tau)\\ \textbf{a}\to b}} (C \tau)^{2\sigma} e^{C \vert t\vert} + x^{\frac{1}{2}} \log(x)^2 \sum_{b\in \mathcal{A}} \sum_{\substack{\textbf{a},\textbf{b}\in Y(\tau)\\ \textbf{a},\textbf{b}\to b}} (C \tau)^{2\sigma} e^{C \vert t\vert}\\
&\ll x^2  \vert Y(\tau)\vert  (C \tau)^{2\sigma} e^{C \vert t\vert} + x^{\frac{1}{2}} \log(x)^2 \vert Y(\tau)\vert^2 (C \tau)^{2\sigma} e^{C \vert t\vert}\\
&\ll (C \tau)^{2\sigma} e^{C \vert t\vert} \left(  x^2 \tau^{-\delta} + x^{\frac{1}{2}} \log(x)^2 \tau^{-2\delta} \right).
\end{align*}
The proof of Proposition \ref{prop:sum_of_HS_norm} is complete.
\end{proof}

\normalem
\bibliography{hecke_shottky} 

\providecommand{\bysame}{\leavevmode\hbox to3em{\hrulefill}\thinspace}
\providecommand{\MR}{\relax\ifhmode\unskip\space\fi MR }
\providecommand{\MRhref}[2]{%
  \href{http://www.ams.org/mathscinet-getitem?mr=#1}{#2}
}
\providecommand{\href}[2]{#2}
\begin{thebibliography}{10}

\bibitem{Borthwick_book}
D.~{Borthwick}, \emph{{Spectral theory of infinite-area hyperbolic surfaces}},
  2nd edition ed., Basel: Birkh\"auser/Springer, 2016.

\bibitem{Bourgain_Dyatlov}
J.~Bourgain and S.~Dyatlov, \emph{{Fourier dimension and spectral gaps for
  hyperbolic surfaces}}, {Geom. Funct. Anal.} \textbf{27} (2017), no.~4,
  744--771.

\bibitem{BGS}
J.~{Bourgain}, A.~{Gamburd}, and P.~{Sarnak}, \emph{Generalization of
  {S}elberg's {$\frac{3}{16}$} theorem and affine sieve}, Acta Math.
  \textbf{207} (2011), no.~2, 255--290. \MR{2892611}

\bibitem{BK10}
J.~Bourgain and A.~Kontorovich, \emph{On representations of integers in thin
  subgroups of $\mathrm{SL}_2(\mathbb{Z})$}, Geometric And Functional Analysis
  \textbf{20} (2010), no.~5, 1144--1174.

\bibitem{Button}
J.~{Button}, \emph{All {F}uchsian {S}chottky groups are classical {S}chottky
  groups}, The {E}pstein birthday schrift, Geom. Topol. Monogr., vol.~1, Geom.
  Topol. Publ., Coventry, 1998, pp.~117--125. \MR{1668339}

\bibitem{caldmagee2023}
I.~Calderón and M.~Magee, \emph{{Explicit spectral gap for Schottky subgroups
  of $\mathrm{SL} (2,\mathbb{Z})$}},  (2023), arXiv preprint: 2303.17950.

\bibitem{BergmanBook}
P.~Duren and A.~Schuster, \emph{{Bergman Spaces}}, Providence, 2014.

\bibitem{DyZw18}
S.~Dyatlov and M.~Zworski, \emph{{Fractal Uncertainty For Transfer Operators}},
  Int. Math. Res. Not. \textbf{2020} (2018), no.~3, 781--812.

\bibitem{Ehr19}
M.~Ehrman, \emph{Almost primes in thin orbits of pythagorean triangles}, IMRN
  \textbf{2019} (2017), no.~11, 3498--3526.

\bibitem{FP_szf}
K.~Fedosova and A.~Pohl, \emph{Meromorphic continuation of {S}elberg zeta
  functions with twists having non-expanding cusp monodromy}, {Selecta} (2020),
  no.~1, 649--670, Paper No. 9.

\bibitem{Gamburd1}
A.~{Gamburd}, \emph{On the spectral gap for infinite index ``congruence''
  subgroups of {${\rm SL}_2(\mathbb{Z})$}}, Israel J. Math. \textbf{127}
  (2002), 157--200. \MR{1900698}

\bibitem{Gohberg_Goldberg_Krupnik}
I.~C. Gohberg, S.~Goldberg, and N.~Krupnik, \emph{{Traces and Determinants of
  Linear Operators}}, vol. 116, Springer Basel AG, Basel, 2000.

\bibitem{GuiZwor2}
L.~{Guillop\'{e}} and M.~{Zworski}, \emph{Scattering asymptotics for {R}iemann
  surfaces}, Ann. of Math. (2) \textbf{145} (1997), no.~3, 597--660.
  \MR{1454705}

\bibitem{HK15}
J.~Hong and A.~Kontorovich, \emph{Almost prime coordinates for anisotropic and
  thin pythagorean orbits}, Israel J. Math. \textbf{209} (2015), no.~1,
  397--420.

\bibitem{Iwaniec96}
H.~Iwaniec, \emph{The lowest eigenvalue for congruence groups}, Topics In
  Geometry (1996), 203--212.

\bibitem{AnalyticNumberTheory}
H.~Iwaniec and E.~Kowalski, \emph{{Analytic Number Theory}}, vol.~53, A.M.S
  Colloquium Publications, 2004.

\bibitem{Iwaniec1990}
Henryk Iwaniec, \emph{{Small eigenvalues of Laplacian for $\Gamma_0(N)$}}, Acta
  Arithmetica \textbf{56} (1990), no.~1, 65--82.

\bibitem{BKM}
A.~{Kontorovich} J.~{Bourgain} and M.~{Magee}, \emph{Thermodynamic expansion to
  arbitrary moduli}, {J. Reine Angew. Math} \textbf{753} (2019), 89--135,
  Appendix.

\bibitem{JN}
D.~Jakobson and F.~Naud, \emph{Resonances and density bounds for convex
  co-compact congruence subgroups of $ \mathrm{SL}_{2}(\mathbb{Z}) $}, Israel
  J. Math. \textbf{213} (2000), no.~1, 443--473.

\bibitem{JNS}
D.~{Jakobson}, F.~{Naud}, and L.~{Soares}, \emph{{Large covers and sharp
  resonances of hyperbolic surfaces}}, {Ann. Institut Fourier} \textbf{70}
  (2020), no.~2, 523--596.

\bibitem{HKSar}
H.~Kim, D.~Ramakrishnan, and P.~Sarnak, \emph{{Functoriality for the Exterior
  Square of $\mathrm{GL}_4$ and the Symmetric Fourth of $\mathrm{GL}_2$}}, J.
  Am. Math. Soc. \textbf{16} (2003), no.~1, 139--183.

\bibitem{KimShahidi}
H.~Kim and F.~Shahidi, \emph{{Functorial Products for $\mathrm{GL}_2\times
  \mathrm{GL}_3$ and the Symmetric Cube for $\mathrm{GL}_2$}}, Ann. of Math.
  (2) \textbf{155} (2002), no.~3, 837--893.

\bibitem{Kon09}
A.~Kontorovich, \emph{The hyperbolic lattice point count in infinite volume
  with applications to sieves}, Duke Math. J. \textbf{149} (2009), no.~1,
  1--36.

\bibitem{KO12}
A.~Kontorovich and H.~Oh, \emph{Almost prime pythagorean triples in thin
  orbits}, Crelles Journal \textbf{2012} (2012), no.~667.

\bibitem{Lax_Phillips_I}
P.~Lax and R.~S. Phillips, \emph{Translation representation for automorphic
  solutions of the wave equation in non-{Euclidean} spaces, {I}}, Commun. Pure
  Appl. Math. \textbf{37} (1984), no.~3, 303--328.

\bibitem{LRS}
W.~Luo, Z.~Rudnick, and P.~Sarnak, \emph{On {S}elberg's eigenvalue conjecture},
  GAFA \textbf{5} (1995), no.~2, 387--401.

\bibitem{Naud14}
F.~{Naud}, \emph{Density and location of resonances for convex co-compact
  hyperbolic surfaces}, Invent. Math. \textbf{195} (2014), no.~3, 723--750.
  \MR{3166217}

\bibitem{NaudMagee}
F.~{Naud} and M.~{Magee}, \emph{Explicit spectral gaps for random covers of
  {R}iemann surfaces}, {Publ. math. IHES} \textbf{132} (2020), no.~1, 137--179.

\bibitem{OhWinter}
H.~{Oh} and D.~{Winter}, \emph{Uniform exponential mixing and resonance free
  regions for convex cocompact congruence subgroups of {${\rm
  SL}_2(\Bbb{Z})$}}, J. Amer. Math. Soc. \textbf{29} (2016), no.~4, 1069--1115.
  \MR{3522610}

\bibitem{Patterson}
S.~{Patterson}, \emph{{The limit set of a Fuchsian group}}, {Acta Math.}
  \textbf{136} (1976), 241--273.

\bibitem{Patt_Perry}
S.~Patterson and P.~Perry, \emph{The divisor of {Selberg}'s zeta function for
  {Kleinian} groups}, Duke Math. J. \textbf{106} (2001), no.~2, 321--390.

\bibitem{Pohl_Soares}
A.~{Pohl} and L.~{Soares}, \emph{{Density of Resonances for Covers of Schottky
  Surfaces}}, {J. Spectr. Theory} \textbf{10} (2020), no.~3, 1053--1101.

\bibitem{Sar95}
P.~Sarnak, \emph{{Selberg's eigenvalue conjecture}}, Notices Amer. Math. Soc.
  \textbf{42} (1995), no.~11, 1272--1277.

\bibitem{Sar05}
P.~{Sarnak}, \emph{{Harmonic analysis, the trace formula, and Shimura
  varieties}}, {Clay Math. Proc.}, vol.~4, Amer. Math. Soc., Providence, R.I.,
  2005, pp.~659--685.

\bibitem{SarnakXue}
P.~Sarnak and X.~Xue, \emph{Bounds for multiplicities of automorphic
  representations}, Duke Mathematical Journal \textbf{64} (1991), no.~1,
  207--227.

\bibitem{Selberg_3/16}
A.~{Selberg}, \emph{{On the estimation of Fourier coefficients of modular
  forms}}, {Proc. Sympos. Pure Math.}, vol. VIII, Amer. Math. Soc., Providence,
  R.I., 1965, pp.~1--15.

\bibitem{soares2023improved}
L.~Soares, \emph{Improved fractal weyl bounds for convex cocompact hyperbolic
  surfaces and large resonance-free regions},  (2023), arXiv preprint:
  2301.03023.

\bibitem{soares2023uniform}
\bysame, \emph{{Uniform resonance free regions for convex cocompact hyperbolic
  surfaces and expanders}},  (2023), arXiv: 2101.05757.

\bibitem{Venkov_book}
A.~Venkov, \emph{Spectral theory of automorphic functions}, Proc. Steklov Inst.
  Math. (1982), no.~4(153), ix+163 pp., A translation of Trudy Mat. Inst.
  Steklov. {{\bf{1}}53} (1981).

\bibitem{VenkovZograf}
A.~{Venkov} and P.~{Zograf}, \emph{Analogues of {A}rtin's factorization
  formulas in the spectral theory of automorphic functions associated with
  induced representations of {F}uchsian groups}, Izv. Akad. Nauk SSSR Ser. Mat.
  \textbf{46} (1982), no.~6, 1150--1158, 1343. \MR{682487}

\end{thebibliography}
\bibliographystyle{amsplain}

\end{document}